\makeatletter \@addtoreset{equation}{section} \makeatother
\renewcommand\thetable{\thesection.\@arabic\c@table}
\theoremstyle{plain}
\newtheorem{maintheorem}{Theorem}
\newtheorem{theorem}{Theorem}[section]
\newtheorem{lemma}{Lemma}[section]
\newtheorem{corollary}{Corollary}[section]
\newtheorem{definition}{Definition}[section]
\newtheorem{remark}{Remark}[section]
\newtheorem{claim}{Claim}[section]
\begin{document}

\title{Dimensions of $C^1-$average Conformal Hyperbolic Sets}

\author{Juan Wang}
\address{School of Mathematics, Physics and Statistics, Shanghai University of Engineering Science, Shanghai 201620, P.R. China}
\address{Departament of Mathematics, Suzhou University of Science and Technology,
Suzhou 215009, Jiangsu, P.R. China}
\email{wangjuanmath@sues.edu.cn}

\author{Jing Wang}
\address{Department of mathematics, Soochow University, Suzhou 215006, Jiangsu, China}
\email{791048628@qq.com}

\author{Yongluo Cao }
\address{Departament of Mathematics, Soochow University,
Suzhou 215006, Jiangsu, P.R. China}
\address{Departament of Mathematics, Shanghai Key Laboratory of PMMP, East China Normal University,
 Shanghai 200062, P.R. China}
\email{ylcao@suda.edu.cn}

\author{Yun Zhao}
\address{Departament of Mathematics, Soochow University,
Suzhou 215006, Jiangsu, P.R. China}
\email{zhaoyun@suda.edu.cn}

\thanks{The first author is partially supported by NSFC (11501400). The third author is partially supported by NSFC (11771317, 11790274), the fourth author is partially supported by NSFC (11790274).}

\date{\today}

\begin{abstract}This paper introduces the concept of average conformal hyperbolic sets, which admit only one positive and one negative Lyapunov exponents for any ergodic measure. For an average conformal hyperbolic set of a $C^1$ diffeomorphism, utilizing the techniques in sub-additive thermodynamics formalism and some geometric arguments with unstable/stable manifolds,  a formula of the Hausdorff dimension and lower (upper) box dimension is given in this paper, which are exactly the sum of the dimensions of the restriction of the hyperbolic set to a stable and unstable manifolds. Furthermore, the dimensions of an average conformal hyperbolic set varies continuously with respect to the dynamics.
\end{abstract}

\keywords{dimension, average conformal, hyperbolic set, topological pressure}

\footnotetext{2010 {\it Mathematics Subject classification}:
 }

\maketitle

%%%%%%%%%%%%%%%%%%%%%%%%%%%%%%%%%%%%%%%%%%%%

\section{Introduction}

The dimension of invariant sets is one of their important characteristics, it plays an important role in various problems in dynamics, see the books \cite{Ba08,Ba11,fal03,Pes97,pu}. Despite many interesting and non-trivial developments in the dimension theory of dynamical systems, only the case of conformal dynamics is completely understood. Indeed,  Bowen \cite{Bo79} and Ruelle \cite{Ru82} found that the Hausdorff dimension of a $C^{1+\gamma}$ conformal repeller was a solution of an equation involving topological pressure. The smoothness is relaxed to $C^1$ in \cite{gp97}. The study of dimension of hyperbolic sets is analogous. Using techniques in thermodynamic formalism,  in \cite{MM83} MaCluskey and Manning  obtained a formula of the Hausdorff dimension of a two dimensional hyperbolic set of a $C^{1+\gamma}$  diffeomorphism; using a different and more geometric method,  Palis and Viana relaxed the smoothness to $C^1$ in \cite{pv88}. Takens \cite{t88} proved that the same formula also holds for lower and upper box dimensions. Using the techniques of Markov partition and thermodynamic formalism, the same formula was obtained for the $C^{1+\gamma}$ conformal hyperbolic set in higher dimension, see the books \cite{Pes97} and \cite{Ba08} for detailed description.

For the non-conformal case, the study of dimension is substantially more complicated and to approach it. Only upper and lower bounds of dimension of repellers are obtained, see \cite{ba96,fal94,zhang} for details, different version of Bowen's equation involving topological pressure are useful in estimating the dimensions of a non-conformal repeller.  Finally, in \cite{bch}, using thermodynamic formalism for sub-additive potentials developed in \cite{cfh}, the authors showed that the zero of the sub-additive topological pressure gives an upper bound of the Hausdorff dimension of repellers, and furthermore, that the upper bounds obtained in the previous works \cite{bch,fal94,zhang} are all equal. See Climenhaga's paper \cite{cli} for Bowen's equation in estimating Hausdorff dimension in the case of very general non-uniform setting. Recently, in \cite{cpz} the authors introduced the super-additive topological pressure, and showed that the zero of super-additive topological pressure gives a lower bound of the Hausdorff dimension of repellers. We refer the reader to \cite{cp} and \cite{bg} for a detailed description of the recent progress in dimension theory of dynamical systems.

In \cite{bch}, the authors introduced a concept of $C^1$ average conformal repellers which posses only one positive Lyapunov exponent for any ergodic measure. An example is given in \cite{zcb09} to show that such a repeller is indeed non-conformal. The dimension of an average conformal repeller is given by the zero of sub-additive topological pressure, see \cite{bch} for details.

In this paper, we introduce a concept of $C^1$ average conformal hyperbolic sets in higher dimension. Roughly speaking, an average conformal hyperbolic set admits only one positive and one negative Lyapunov exponents for any ergodic measure. We obtain a dimension formula of such hyperbolic sets, which can be described as the sum the dimensions of the restriction of the hyperbolic set to a stable and unstable
manifolds. Furthermore, the dimension of  a $C^1$ average conformal hyperbolic set varies continuously with respect to the dynamics.

\subsection{Notions and Set-up}\label{ns}
Let  $f: M \rightarrow M$  be a $C^1$ diffeomorphism  on a $m$-dimensional compact Riemannian manifold.  For each $x\in M$, the following quantities
 $$\|D_xf\|=\sup_{0\not=u\in T_xM}\frac{\|D_xf(u)\|}{\|u\|} ~~\text{and}~~ m(D_xf)=\inf_{0\not= u\in T_xM}\frac{\|D_xf(u)\|}{\|u\|}$$
 are respectively called the maximal norm and minimum norm of the differentiable operator $D_xf: T_xM\to T_{fx}M$,  where $\|\cdot\|$ is the norm induced by the Riemannian metric on $M$.

 Now we recall some definitions and known results in hyperbolic dynamics. A compact invariant subset $\Lambda\subset M$ is called a  \emph{ hyperbolic set} if there exists a continuous splitting of the tangent bundle $T_\Lambda M = E^{s}\oplus E^{u}$, and constants $C > 0,\ 0 < \lambda < 1$ such that for every $x \in \Lambda$
 \begin{enumerate}
\item[(1)] $D_xf(E^s(x)) = E^s(f(x)),\ D_xf(E^u(x)) = E^u(f(x))$;
\item[(2)] for all $n \geq 0,\ \|D_xf^n(v)\|\leq C\lambda ^n\|v\|$ if $v \in E^s(x)$, and $\|D_xf^{-n}(v)\|\leq C\lambda^n\|v\|$ if $v \in E^u(x)$.
\end{enumerate}
Here $\lambda$ is called the \emph{skewness of the hyperbolicity}. Given a point $x \in \Lambda$, for each small $\beta>0$,  the \emph{local stable and unstable manifolds} are defined as  follows:
\begin{eqnarray*}
&&W_\beta^s(f,x)=\Big\{y \in M : d(f^n(x),f^n(y)) \leq \beta,\ \forall n \geq 0\Big\},\\
&&W_\beta^u(f,x)=\Big\{y \in M : d(f^{-n}(x),f^{-n}(y)) \leq \beta,\ \forall n \geq 0\Big\}.
\end{eqnarray*}
The global unstable and stable sets of $x\in\Lambda$ are given as follows:
$$W^u(f,x)=\bigcup_{n\geq0}f^n(W^u_\beta(f,f^{-n}(x))),\ W^s(f,x)=\bigcup_{n\geq0}f^{-n}(W^s_\beta(f,f^n(x))).$$
Let $d_u$ be the metric induced by the Riemannian structure on the unstable manifold $W^u$ and $d_s$  the metric induced by the Riemannian structure on the stable manifold $W^s$. For any $\rho>0$, let $B^u(x,\rho)$ (respectively, $B^s(x,\rho)$) be the ball in the unstable (respectively, stable) manifold of radius $\rho$ centered at $x$, and
\begin{eqnarray*}
\begin{aligned}
B^i_{n+1}(x,\rho)=\{y\in W^i(f,x): d_i(f^kx,f^ky)<\rho\ \text{for}\ k=0,1,2,\cdots,n\},
\end{aligned}
\end{eqnarray*}
where $i\in\{u, s\}$ and $n\in\mathbb{N}$. A hyperbolic set is called \emph{locally maximal}, if there exists a neighbourhood $U$ of $\Lambda$ such that $\Lambda=\bigcap_{n\in\mathbb{Z}}f^n(U)$.

Let $\text{Diff}^1(M)$ be the set of all $C^1$ diffeomorphisms from $M$ to $M$, and $\mathcal{U}\subset \mbox{Diff}^1(M)$ be a neighbourhood of $f$ such that, for each $g \in \mathcal{U}$, $\Lambda_g = \bigcap_{n\in\mathbb{Z}} g^n(U)$ is a locally maximal hyperbolic set for $g$ and there is a homeomorphism $h_g: \Lambda \rightarrow \Lambda_g$ which  conjugates $g|_{\Lambda_g} $ and $f|_{\Lambda}$, i.e., $f\circ h_g=h_g\circ g$, with $h_g$ $C^0-$close to identity if $g$ is $C^1-$close to $f$.

For $g \in \mathcal{U}$, let $T_{\Lambda_g}M=E^s_g\bigoplus E^u_g$ be the hyperbolic splitting of $\Lambda_g$. The local unstable and stable sets of $z\in\Lambda_g$ are denoted by $W^u_{\beta}(g,z)$ and $W^s_{\beta}(g,z)$ respectively.
 These are embedded $C^1-$disks with $T_zW_\beta^u(g,z)=E^u_g(z)$ and $T_zW_\beta^s(g,z)=E^s_g(z)$. Moreover, for $i\in\{u,s\}$, $\{W^i_\beta(g,z):\ z\in\Lambda_g\}$ is continuous on $g$ in the following sense: there is $\{\theta_{g,x}^i:\ x\in\Lambda\}$ where $\theta_{g,x}^i:\ W^i_\beta(f,x)\to W^i_\beta(g,h_g(x))$ is a $C^1$ diffeomorphism with $\theta_{g,x}^i(x)=h_g(x)$, such that if $g$ is $C^1-$close to $f$ then, for all $x\in\Lambda$, $\theta_{g,x}^i$ is uniformly $C^1-$close to the inclusion of $W^i_\beta(f,x)$ in $M$.

\subsection{Dimension of Conformal Hyperbolic Sets} Roughly speaking, a hyperbolic set is called conformal, if the derivative of the map is a multiple of an isometry along the stable and unstable directions (see Definition in \cite{Pes97}).

If $\Lambda$ is a hyperbolic horseshoe of a $C^{1+\gamma}$ surface diffeomorphism $f$,  for every $x\in \Lambda$, in  \cite{MM83} MaCluskey and Manning proved that
\begin{equation}\label{equ1.1}
\dim_H(W^s_\beta(f,x)\cap\Lambda)=t^s\ \mbox{and}\ \dim_H(W^u_\beta(f,x)\cap\Lambda)=t^u
\end{equation}
where $t^s$ and $t^u$ are the roots of $P_\Lambda(f, t\log\|Df|_{E^s}\|)=0$, $P_\Lambda(f, -t\log\|Df|_{E^u}\|)=0$ respectively (here $P(\cdot)$ denotes the topological pressure ). Since $\dim E^s=\dim E^u=1$, the local product structure is a Lipschitz homeomorphism with Lipschitz inverse. Therefore
\begin{equation}\label{equ1.2}
\dim_H\Lambda=t^s+t^u.
\end{equation}
The equality between the Hausdorff dimension and the lower and upper box dimensions is due to Takens \cite{t88}.
Palis and Viana relaxed the smoothness to $C^1$ in \cite{pv88}. Their proof used H\"{o}lder conjugancies between nearby hyperbolic invariant sets and H\"{o}lder stable and unstable foliations with H\"{o}lder exponents close to one.

In the case of higher dimensional conformal hyperbolic sets, Pesin \cite{Pes97} and Barreira \cite{Ba08} studied the dimension of a locally maximal hyperbolic invariant sets of $C^{1+\gamma}$ conformal dynamical systems. Using techniques in thermodynamic formalism, they proved the Hausdorff dimension, lower and upper box dimensions all agree for the restriction of the hyperbolic invariant set to local stable (unstable) manifolds.
In this case, the formula \eqref{equ1.2} also holds.

\subsection{Statement of Main Result}\label{statement}
In this paper, we introduce the concept of average conformal hyperbolic set, i.e., it admits only one positive and one negative Lyapunov exponents for any ergodic measure (see Definition \ref{ave-conf}). Using  MaCluskey and Manning's thermodynamic formalism techniques  \cite{MM83} and Palis and Viana's geometric methods \cite{pv88},
 a formula of dimension  of locally maximal average conformal hyperbolic sets of a $C^1$ diffeomorphism is obtained. We also give the estimations of the dimensions of the restriction of $C^1$ non-conformal hyperbolic invariant set to stable and unstable manifolds (see Lemma \ref{lemma3.3} and \ref{lemma3.4}). Furthermore, the dimension of a $C^1$ average conformal hyperbolic set varies continuously with respect to the dynamics.

 The following theorem gives a formula of dimension of locally maximal average conformal hyperbolic sets of a $C^1$ diffeomorphism. It extends Palis and Viana's result \cite{pv88} to the case of average conformal hyperbolic sets in higher dimension. It relaxed the smoothness of the results in Pesin's book \cite{Pes97} (see also \cite{Ba08}) to $C^1$. Of course, it extends  MaCluskey and Manning's result in \cite{MM83} to both higher dimension and $C^1$ diffeomorphisms. Furthermore, it gives the continuity of the dimension of average conformal hyperbolic sets, which implies the continuity of the dimension of conformal hyperbolic sets.

%\begin{theorem}\label{thm-conf}
%Let $\Lambda$ be a locally maximal conformal hyperbolic set of a $C^1$-diffeomorphism $f$, such %that $f$ is transitive on $\Lambda$. Then for every $x\in\Lambda$, %$$\dim_H\Lambda=\dim_H(\Lambda\cap W_\beta^u(f,x))+\dim_H(\Lambda\cap W_\beta^s(f,x))$$ and %$$\overline{\dim}_B\Lambda=\overline{\dim}_B(\Lambda\cap %W_\beta^u(f,x))+\overline{\dim}_B(\Lambda\cap W_\beta^s(f,x)).$$ Furthermore, %$\dim_H\Lambda=\underline{\dim}_B\Lambda=\overline{\dim}_B\Lambda.$
%\end{theorem}

\begin{maintheorem}\label{thm-average}
Let $\Lambda$ be a locally maximal average conformal hyperbolic invariant set of a $C^1$ diffeomorphism $f$, such that $f$ is transitive on $\Lambda$. Then for every $x\in\Lambda$,
\begin{eqnarray*}
&&\dim_H\Lambda=\dim_H(W_\beta^u(f,x) \cap \Lambda)+\dim_H(W_\beta^s(f,x) \cap \Lambda),\\
&&\underline{\dim}_B\Lambda=\underline{\dim}_B(W_\beta^u(f,x) \cap \Lambda)+\underline{\dim}_B(W_\beta^s(f,x) \cap \Lambda),\\
&&\overline{\dim}_B\Lambda=\overline{\dim}_B(W_\beta^u(f,x) \cap \Lambda)+\overline{\dim}_B(W_\beta^s(f,x) \cap \Lambda)
\end{eqnarray*}
 and $\dim_H\Lambda=\underline{\dim}_B\Lambda=\overline{\dim}_B\Lambda.$ Moreover, if $g\xrightarrow{C^1} f$, then $\dim\Lambda_g\to \dim \Lambda$, here $\dim$ denotes either $\dim_H$ or $\underline{\dim}_B$ or $\overline{\dim}_B$.
\end{maintheorem}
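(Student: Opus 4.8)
The plan is to factor the argument through the subadditive thermodynamic formalism on the two invariant foliations and then to glue the resulting lower-dimensional estimates by a $C^1$ local product structure argument in the spirit of Palis and Viana \cite{pv88}. First I would pin down the dimensions of the two slices. The set $W^u_\beta(f,x)\cap\Lambda$ is, for the expanding action of $f$ along unstable leaves, an average conformal repeller; average conformality forces the subadditive potential $\Phi^u=\{\log\|D_xf^n|_{E^u}\|\}_{n\ge1}$ and its super-additive companion $\{\log m(D_xf^n|_{E^u})\}_{n\ge1}$ to have the same pressure, so $t\mapsto P_\Lambda(f,-t\Phi^u)$ is strictly decreasing with a single zero $t^u$. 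Lemma \ref{lemma3.3} and Lemma \ref{lemma3.4} then give $\dim_H(W^u_\beta(f,x)\cap\Lambda)=\underline{\dim}_B(W^u_\beta(f,x)\cap\Lambda)=\overline{\dim}_B(W^u_\beta(f,x)\cap\Lambda)=t^u$. Running the symmetric argument for $f^{-1}$ on stable leaves yields a single number $t^s$, the zero of $t\mapsto P_\Lambda(f,t\Phi^s)$ with $\Phi^s=\{\log\|D_xf^n|_{E^s}\|\}_{n\ge1}$, equal to all three dimensions of $W^s_\beta(f,x)\cap\Lambda$. Because $f$ is transitive these numbers are independent of the base point $x$.

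The heart of the proof is the product formula, for which I would use the local product structure coming from local maximality and hyperbolicity: for $x\in\Lambda$ the bracket $[y,z]=W^s_\beta(f,y)\cap W^u_\beta(f,z)$ gives a homeomorphism of $(W^u_\beta(f,x)\cap\Lambda)\times(W^s_\beta(f,x)\cap\Lambda)$ onto a relative neighbourhood of $x$ in $\Lambda$. In the $C^{1+\gamma}$ conformal setting this chart is bi-Lipschitz and the product formula is immediate; under only $C^1$ regularity the stable and unstable holonomies are merely H\"older, which is exactly the difficulty Palis and Viana overcome. Following their scheme, I would show that by shrinking the neighbourhood $\mathcal U$ and localizing, the holonomies between nearby unstable (resp. stable) leaves can be taken H\"older with exponent $\theta$ arbitrarily close to $1$. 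A H\"older product chart with exponent $\theta$ distorts Hausdorff dimension by at most the factor $\theta^{\pm1}$, and since the slice box and Hausdorff dimensions coincide the metric product of the slices has dimension exactly $t^u+t^s$; covering $\Lambda$ by bracket images of products of covers of the two slices therefore gives $\theta(t^u+t^s)\le\dim_H\Lambda\le\theta^{-1}(t^u+t^s)$, and letting $\theta\to1$ yields $\dim_H\Lambda=t^u+t^s$. The identical counting argument applied to $(\beta,n)$-separated sets organized through the product chart gives $\underline{\dim}_B\Lambda=t^u+t^s=\overline{\dim}_B\Lambda$, and coincidence of the three global dimensions is then automatic since each equals $t^u+t^s$.

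For the continuity statement I would show that $g\mapsto t^u(g)$ and $g\mapsto t^s(g)$ are continuous, where $t^u(g),t^s(g)$ are the zeros of $t\mapsto P_{\Lambda_g}(g,-t\Phi^u_g)$ and $t\mapsto P_{\Lambda_g}(g,t\Phi^s_g)$. The conjugacy $h_g$ and the $C^1$-closeness of the maps $\theta^i_{g,x}$ to the inclusions transport the subadditive potentials $\Phi^u_g,\Phi^s_g$ to potentials on $\Lambda$ that converge uniformly to $\Phi^u,\Phi^s$ as $g\xrightarrow{C^1}f$; continuity of the subadditive topological pressure in the $C^1$ dynamics and in the potential (the variational principle for subadditive pressure together with average conformality, which pins the pressure between the sub- and super-additive values, supplies both semicontinuities) then shows $P_{\Lambda_g}(g,-t\Phi^u_g)\to P_\Lambda(f,-t\Phi^u)$ uniformly in $t$ on compacta, and the uniform strict monotonicity in $t$ (a consequence of uniform hyperbolicity, which bounds the Lyapunov exponents away from zero) forces $t^u(g)\to t^u$, and likewise $t^s(g)\to t^s$. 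Hence $\dim\Lambda_g=t^u(g)+t^s(g)\to t^u+t^s=\dim\Lambda$ for each of the three dimensions.

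The main obstacle is the $C^1$ product-structure step: unlike the $C^{1+\gamma}$ case there is no Lipschitz chart, and one must control the H\"older exponent of the stable and unstable holonomies and show it can be pushed to $1$. This is where Palis and Viana's geometric estimates, rather than a Markov-partition and Lipschitz product argument, are essential, and they must be combined with the subadditive formalism of the first step, since average conformality delivers conformal-type behaviour only in the Birkhoff-average sense and not pointwise.
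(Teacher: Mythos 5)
Your first two steps are essentially the paper's own argument. The slice dimensions are obtained from Lemmas \ref{lemma3.3} and \ref{lemma3.4} combined with the coincidence of sub- and super-additive pressures forced by average conformality; one caveat is that, applied to $f$ alone, these two lemmas produce two \emph{different} zeros (of $P_\Lambda(f,-t\log\|D_xf|_{E^u(x)}\|)$ and of $P_\Lambda(f,-t\log m(D_xf|_{E^u(x)}))$), and the paper must apply them to the iterates $F=f^{2^k}$ and invoke the monotone convergence of the two sequences of zeros, as in \cite{bch}, to reach the single value $t_u$; your ``equal pressures'' remark is the right mechanism, but this limiting step over iterates is what makes it rigorous. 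Your product-structure step, with holonomies of H\"older exponent arbitrarily close to $1$ and the resulting $\theta^{\pm1}$ distortion of dimension, is exactly the paper's Steps 1--2 via Theorems \ref{th3.2} and \ref{th3.3}. Note that the paper does not carry out direct Palis--Viana type geometric estimates on $f$ itself: it passes to an iterate $F=f^N$ (so that average conformality gives the norm-ratio bound \eqref{daxiaomo}), perturbs $F$ to a nearby $C^2$ diffeomorphism for which the bunching conditions of Lemma \ref{lemma3.1} yield $C^1$, hence Lipschitz, foliations, and pulls this back through the bi-H\"older conjugacy of Theorem \ref{th3.1} whose exponent can be taken close to $1$.

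The genuine gap is in your continuity argument. You conclude with $\dim\Lambda_g=t^u(g)+t^s(g)$, where $t^u(g),t^s(g)$ are the zeros of the sub-additive pressures for $g$; this amounts to applying the dimension formula (the very theorem being proved) to $\Lambda_g$. But average conformality is not a $C^1$-open condition: an arbitrarily $C^1$-small perturbation can split the Lyapunov exponents inside $E^u$ of a single periodic orbit, so $\Lambda_g$ is in general \emph{not} average conformal. For such $g$ neither the equality of sub- and super-additive pressures nor the identity ``slice dimension $=$ pressure zero'' is available; one only has one-sided bounds, with the two zeros genuinely different. For the same reason, your claimed continuity of the sub-additive pressure in the dynamics cannot be extracted from the pinching argument, since the pinching holds only at $f$ (continuity of sub-additive pressure under $C^1$ perturbation is itself a delicate problem, cf. \cite{cpz}). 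The paper's Step 3 avoids all of this: continuity of the \emph{slice} dimensions comes from the bi-H\"older conjugacies $h_g$ with exponent tending to $1$ (Theorem \ref{th3.1} together with Lemma \ref{lemma3.2}), which require nothing of $\Lambda_g$ beyond hyperbolicity, and the sandwich
\[
\dim_H\big(W^u_\beta(g,y)\cap\Lambda_g\big)+\dim_H\big(W^s_\beta(g,y)\cap\Lambda_g\big)\le \dim_H\Lambda_g\le\overline{\dim}_B\Lambda_g\le \overline{\dim}_B\big(W^u_\beta(g,y)\cap\Lambda_g\big)+\overline{\dim}_B\big(W^s_\beta(g,y)\cap\Lambda_g\big)
\]
for $y=h_g(x)$ only needs the H\"older product chart for $\Lambda_g$, which in turn only needs the norm-ratio bound \eqref{daxiaomo} for a fixed iterate $G=g^N$ --- and that bound \emph{is} $C^1$-open. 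Since the slice dimensions of $\Lambda_g$ lie within $\xi$ of those of $\Lambda$, for which Hausdorff and box values coincide, the sandwich closes as $\xi\to0$ and gives $\dim\Lambda_g\to\dim\Lambda$ for all three notions of dimension. Your final step should be replaced by this conjugacy-plus-squeezing argument.
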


The paper is organized as follows. In Section \ref{np}, we recall definitions of dimension, topological pressure, and introduce the concept of average conformal hyperbolic sets.
In Section \ref{proof}, we give the detailed proof of the main result.

%%%%%%%%%%%%%%%%%%%%%%%%%%%%%%%%%%%%%%%%%%%%

\section{Definitions and Preliminaries}\label{np}
In this section, we recall the definitions of dimension, entropy and topological pressure. Particularly, we give the definition of average conformal hyperbolic sets and some useful preliminary results.
\subsection{Dimensions of sets}Given a subset $X\subset M$. A countable family $\{U_{i}\}_{i\in \mathbb{N}}$ of open sets is a $\delta-$cover
of $X$ if $\mbox{diam} U_{i}<\delta$ for each $i$ and their
union contains $X$. For any $s\geq0$, let
\begin{eqnarray*}
\mathcal{H}_{\delta}^{s}(X)=\inf\bigr\{\sum\limits_{i=1}^{\infty}(\mbox{diam} U_{i})^{s}:\{U_{i}\bigr\}_{i\ge1}\
\mbox{is\ a}\ \delta-\mbox{cover\ of X}\}
\end{eqnarray*}
and
\begin{eqnarray*}
\mathcal{H}^{s}(X)=\lim\limits_{\delta\rightarrow
0}\mathcal{H}_{\delta}^{s}(X).
\end{eqnarray*}
This limit exists, though the limiting value can be $0$ or $\infty$. We call $\mathcal{H}^{s}(X)$ the $s-$dimensional Hausdorff measure of $X$.

\begin{definition}\label{dim}  The following jump-up value of $\mathcal{H}^{s}(X)$
$$\dim_{H}X=\inf\{s:\mathcal{H}^{s}(X)=0\}=\sup\{s:\mathcal{H}^{s}(X)=\infty\}$$
is called the \emph{Hausdorff
dimension} of $X$.
The \emph{lower and upper box dimension} of $X$ are defined respectively
by
$$\underline{\dim}_BX=\liminf\limits_{\delta\to0}\frac{\log N(X,\delta)}{-\log\delta}\ \text{and}\ \overline{\dim}_BX=\limsup\limits_{\delta\to0}\frac{\log N(X,\delta)}{-\log\delta},$$
where $N(X,\delta)$ denotes the least number of balls of radius $\delta$ that are needed to cover the set $X$.
\end{definition}

\subsection{Average Conformal Hyperbolic Sets}Let $(M,f)$ and $\Lambda$ be the same as in Section \ref{ns}.
We say a diffeomorphism $f$ on $\Lambda$ is \emph{$u-$conformal } (respectively, \emph{$s-$conformal}) if there exists a continuous function $a^u(x)$ (respectively, $a^s(x)$) on $\Lambda$
such that $D_xf|_{E^u(x)}=a^u(x) Isom_x$ for every $x\in\Lambda$ (respectively, $D_xf|_{E^s(x)}=a^s(x) Isom_x$), where $Isom_x$ denotes an isometry of $E^u(x)$ (respectively, $E^s(x)$).
A diffeomorphism $f$ on $\Lambda$ is called \emph{conformal} if it is $u-$conformal and $s-$conformal, in this case, we also call $\Lambda$ a \emph{conformal hyperbolic set} of $f$;
otherwise, we say that $\Lambda$ is a \emph{non-conformal hyperbolic set} of $f$.
%
%\begin{definition}\label{conformal}
%A diffeomorphism $f$ on a locally maximal hyperbolic set $\Lambda$ is \emph{$u$-conformal } %(respectively, \emph{$s$-conformal}) if there exists a continuous function $a^u(x)$ %(respectively, $a^s(x)$) on $\Lambda$ such that $D_xf|_{E^u(x)}=a^u(x) Isom_x$ for every %$x\in\Lambda$ (respectively, $D_xf|_{E^s(x)}=a^s(x) Isom_x$), where $Isom_x$ denotes an isometry %of $E^u(x)$ or $E^s(x)$. A diffeomorphism $f$ on $\Lambda$ is called \emph{conformal} if it is %$u-$conformal and $s-$conformal, in this case, we also call $\Lambda$ a \emph{conformal %hyperbolic set}.
%\end{definition}
%
Following the idea in \cite{bch}, we introduce the concept of average conformal hyperbolic sets which are non-conformal case.
The average conformal concept was a generalization of quasi-conformal and weakly conformal concept in \cite{ba96,Pes97}. By the Oseledec multiplicative ergodic theorem (see \cite{ose}), there exists a total measure set $\mathcal{O}\subset \Lambda$ such that, for each $x \in \mathcal{O}$ and each invariant measure $\mu$ supported on $\Lambda$ there exist positive integers  $m_1(x),m_2(x),\cdots, m_{p(x)}(x)$, numbers $\lambda_1(x)>\lambda_2(x)>\cdots >\lambda_{p(x)}(x)$ and a splitting $T_xM=E_1(x)\bigoplus E_2(x)\bigoplus\cdots \bigoplus E_{p(x)}(x)$ satisfies that
\begin{enumerate}
\item[(1)] $D_xf E_i(x)=E_i(f(x))$ for each $i$ and $\sum_{i=1}^{p(x)}m_i(x)=m$;
\item[(2)] for each $0\neq v\in E_i(x)$ we have that $$\lambda_i(x) = \lim_{n \to \infty}\frac{1}{n}\log\|D_xf^n(v)\|.$$
\end{enumerate}
Here we call the numbers $\{\lambda_i(x)\}$ the Lyapunov exponents of $(f,\mu)$.
In the case that $\mu$ is  an invariant ergodic measure on $\Lambda$, the numbers $p(x)$, $\{m_i(x)\}$ and $\{\lambda_i(x)\}$ are constants. We denote them simply as $p$,  $\{m_i\}_{i=1}^p$ and $\{\lambda_i(\mu)\}_{i=1}^p$.

\begin{definition}\label{ave-conf}
 A hyperbolic set $\Lambda$ is called \emph{average conformal} if it has two unique Lyapunov exponents, one positive and one negative. That is, for any  invariant ergodic measure $\mu$ on $\Lambda$, the Lyapunov exponents are $\lambda_1(\mu)=\lambda_2(\mu)=\cdots =\lambda_k(\mu) > 0$ and $\lambda_{k+1}(\mu)=\lambda_{k+2}(\mu)=\cdots =  \lambda_m(\mu)< 0$ for some $0 < k < m$.
\end{definition}

%\begin{remark}\label{ave-conformal repeller}\cite{bch}
%An invariant repeller is called average conformal if for any invariant ergodic measure $\mu$ on %$\Lambda$, the Lyapunov exponents are $\lambda_1(\mu)=\lambda_2(\mu)=\cdots=\lambda_m(\mu)=0$.
%\end{remark}

Following the same proof of Theorem 4.2 in \cite{bch}, we get the following result.
\begin{lemma}\label{ave-conver}
If $\Lambda$ is an average conformal hyperbolic invariant set. Let
$$\phi_u(f,x):=|\det(D_xf|_{E^u(x)})|^{\frac{1}{d_1}}$$
and
$$\phi_s(f,x):=|\det(D_xf|_{E^s(x)})|^{\frac{1}{d_2}}$$
where $d_1=\dim E^u$, $d_2=\dim E^s$. Then for any $n\in\mathbb{N}$,
$$m(D_xf^n|_{E^i(x)})\leq\phi_i(f^n,x)\leq\|D_xf^n|_{E^i(x)}\|$$ and
$$\lim_{n\to\infty}\frac{1}{n}(\log\|D_xf^n|_{E^i(x)}\|-\log m(D_xf^n|_{E^i(x)}))=0$$ uniformly on $\Lambda$,  for $i\in \{u,s\}$.
\end{lemma}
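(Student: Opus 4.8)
The plan is to split the statement into an elementary pointwise inequality and a uniform asymptotic estimate, the latter being where the average conformality hypothesis genuinely enters. The two parts are logically independent, so I would treat them separately and only combine them at the end.

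First I would establish the two-sided bound $m(D_xf^n|_{E^i(x)})\le\phi_i(f^n,x)\le\|D_xf^n|_{E^i(x)}\|$ by pure linear algebra, independently of any dynamics. Writing $s_1\ge s_2\ge\cdots\ge s_{d_i}>0$ for the singular values of the invertible operator $D_xf^n|_{E^i(x)}$ on the $d_i$-dimensional space $E^i(x)$ (with $d_u=d_1$, $d_s=d_2$), one has $\|D_xf^n|_{E^i(x)}\|=s_1$, $m(D_xf^n|_{E^i(x)})=s_{d_i}$ and $|\det(D_xf^n|_{E^i(x)})|=\prod_{j=1}^{d_i}s_j$. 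Since the geometric mean of positive numbers lies between their minimum and maximum, $s_{d_i}\le(\prod_{j=1}^{d_i}s_j)^{1/d_i}\le s_1$, which is exactly the asserted chain once we recall $\phi_i(f^n,x)=|\det(D_xf^n|_{E^i(x)})|^{1/d_i}$.

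For the uniform convergence I would set $a_n^i(x):=\log\|D_xf^n|_{E^i(x)}\|-\log m(D_xf^n|_{E^i(x)})\ge 0$ and reduce the claim to $\tfrac1n a_n^i\to 0$ uniformly on $\Lambda$. The key structural observations are that $\log\|D_xf^n|_{E^i}\|$ is sub-additive while $\log m(D_xf^n|_{E^i})$ is super-additive along the orbit (from submultiplicativity of the norm and supermultiplicativity of the minimum norm under composition), so that $\{a_n^i\}$ is a non-negative \emph{sub-additive} sequence of continuous functions on the compact set $\Lambda$. For any ergodic measure $\mu$, Kingman's sub-additive ergodic theorem gives $\tfrac1n\log\|D_xf^n|_{E^i}\|\to\lambda^i_{\max}(\mu)$ and $\tfrac1n\log m(D_xf^n|_{E^i})\to\lambda^i_{\min}(\mu)$ for $\mu$-a.e. $x$, where these are the largest and smallest Lyapunov exponents of $\mu$ inside $E^i$. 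By Definition \ref{ave-conf} the exponents inside $E^u$ (resp. $E^s$) all coincide, hence $\lambda^i_{\max}(\mu)=\lambda^i_{\min}(\mu)$ and therefore $\lim_n\tfrac1n\int a_n^i\,d\mu=0$ for every invariant measure $\mu$, the general invariant case following from the ergodic decomposition.

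The main obstacle is upgrading this measure-theoretic vanishing to genuine \emph{uniform} convergence on $\Lambda$. Here I would invoke the variational principle for sub-additive sequences (as in \cite{cfh,bch}): because $n\mapsto\sup_{x\in\Lambda}a_n^i(x)$ is itself sub-additive, Fekete's lemma guarantees that $\beta_i:=\lim_n\tfrac1n\sup_{x}a_n^i(x)$ exists, and one has $\beta_i=\sup_\mu\lim_n\tfrac1n\int a_n^i\,d\mu$, the supremum ranging over $f$-invariant measures. Since the affine, upper semicontinuous functional $\mu\mapsto\lim_n\tfrac1n\int a_n^i\,d\mu$ attains its maximum at an extreme (ergodic) point of the simplex of invariant measures, the previous step forces $\beta_i=0$. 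Combined with $0\le a_n^i(x)\le\sup_y a_n^i(y)$ this yields $\tfrac1n a_n^i\to 0$ uniformly on $\Lambda$, completing the argument for both $i=u$ and $i=s$.
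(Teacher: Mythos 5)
Your proposal is correct and takes essentially the same route as the paper, whose proof of this lemma is by reference to Theorem 4.2 of \cite{bch}: the singular-value (geometric mean) inequality for the determinant term, sub-additivity of $a_n^i(x)=\log\|D_xf^n|_{E^i(x)}\|-\log m(D_xf^n|_{E^i(x)})$, Kingman/Oseledets together with average conformality to make the limit vanish for every ergodic (hence every invariant) measure, and a semi-uniform sub-additive argument to upgrade this to uniform convergence on $\Lambda$. The only caveat is attribution: the identity $\lim_n\frac1n\sup_{x}a_n^i(x)=\sup_{\mu}\lim_n\frac1n\int a_n^i\,d\mu$ is not the entropy variational principle of \cite{cfh} itself but the standard semi-uniform sub-additive ergodic theorem (proved via empirical measures, exactly as in the argument of \cite{bch} that the paper invokes), so your use of it is sound.
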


\subsection{Entropy and Pressure}We next recall Bowen's definition of a topological entropy $h(f, Y)$ for a subset $Y$ of a compact metric space $X$ and a continuous map $f:\ X \to X$ (see \cite{b73} for more details). It is defined in a way that resembles Hausdorff dimension. Let $\mathcal{A}$ be a finite open cover of $X$ and write $E\prec\mathcal{A}$ if $E$ is contained in some member of $\mathcal{A}$. Denote $n_{\mathcal{A}}(E)$ the largest non-negative integer such that
$$f^kE\prec\mathcal{A}\ \mbox{for}\ 0\leq k<n_\mathcal{A}(E).$$

\begin{definition}\label{entropy}
Let $\mathcal{C}=\{E_1, E_2, \cdots\}$ be a cover of $Y$, for any $s\ge0$ set
$$D_\mathcal{A}(\mathcal{C}, s)=\sum_{i=1}^\infty \exp[-s\,n_\mathcal{A}(E_i)]$$
and
$$
\begin{aligned}m_{\mathcal{A}, s}(Y)=\lim_{\varepsilon\to 0}\inf\Big \{&D_\mathcal{A}(\mathcal{C},s):\mathcal{C}=\{E_1, E_2, \cdots\},\\ &Y\subset\bigcup_{i=1}^{\infty}E_i\ \mbox{and}\ e^{-n_\mathcal{A}(E_i)}<\varepsilon\ \mbox{for each}\ i\Big\}.
\end{aligned}$$
Then define $h_\mathcal{A}(f, Y)=\inf\{s:\ m_{\mathcal A, s}(Y)=0\}$.
The following quantity
$$h(f, Y)=\sup_{\mathcal{A}}h_\mathcal{A}(f, Y)$$
is called the \emph{topological entropy} of $f$ on the subset $Y$.
\end{definition}

Let $f: X \to X$ be a continuous transformation on a compact metric space $(X,d)$, and $\phi: X \to \mathbb{R}$ continuous function on $X$. In the following, we recall the definition of topological pressure. A subset $F\subset X$ is called an $(n, \varepsilon)-$separated set with respect to $f$ if for any
$x,y\in F, x\not= y$, we have $d_n(x,y):=\max_{0\leq k\leq n-1}d(f^kx, f^ky)>\varepsilon.$ A sequence of continuous functions $\Phi=\{\phi_n\}_{n\ge 1}$ is called \emph{sub-additive}, if
$$\phi_{m+n}\leq\phi_n+\phi_m\circ f^n,~~\forall n,m\in \mathbb{N}.$$
Furthermore,  a sequence of continuous functions $\Phi=\{\phi_n\}_{n\ge 1}$  is called \emph{super-additive} if $-\Phi=\{-\phi_n\}_{n\ge 1}$ is sub-additive.

\begin{definition}\label{definition 2.6}
  Let $Z$ be a subset of $X$, and $\Phi=\{\phi_n\}_{n\ge 1}$ a sub-additive/super-additive potential on $X$, put $$P_n(Z,f, \Phi, \varepsilon)=\sup\Big\{\sum_{x\in F}e^{\phi_n(x)}| F\subset Z\ \mbox{is an}\ (n, \varepsilon)-\mbox{separated set}\Big\}.$$ The following quantity
  \begin{eqnarray}\label{upper-top-pressure}
 \overline{P}_Z(f,\Phi)=\lim_{\varepsilon\to 0}\limsup_{n\to\infty}\frac{1}{n}\log P_n(Z,f,\Phi,\varepsilon)
 \end{eqnarray}
 is called the \emph{upper sub-additive/super-additive  topological pressure} of $\Phi$ (with respect to $f$) on the set $Z$.
 \end{definition}

\begin{remark} Consider $\liminf$ instead of $\limsup$ in \eqref{upper-top-pressure}, we get a quantity $\underline{P}_Z(f,\Phi)$ which is called \emph{lower
sub-additive/super-additive topological pressure} of $\Phi$ (with respect to $f$) on $Z$.
For any compact invariant set $Z\subset X$, we have $\underline{P}_Z(f,\Phi)=\overline{P}_Z(f, \Phi)$. The common value is denoted by $P_Z(f, \Phi)$, which is called the \emph{sub-additive/super-additive topological pressure} of $\Phi$  (with respect to $f$) on $Z$. See \cite{ba96,Pes97} for proofs.
\end{remark}

\begin{remark} If $\Phi=\{\phi_n\}_{n\ge 1}$ is additive in the sense that  $\phi_n(x)=\phi(x)+\cdots+\phi(f^{n-1}x)$ for some continuous function $\phi:X\to \mathbb{R}$, we simply denote the topological pressures $\overline{P}_Z(f,\Phi),\underline{P}_Z(f,\Phi)$ and $P_Z(f, \Phi)$ as $\overline{P}_Z(f,\phi),\underline{P}_Z(f,\phi)$ and $P_Z(f, \phi)$ respectively.
\end{remark}

Let $\mathcal{M}(X)$ be the space of all Borel probability measures on $X$ endowed with the weak* topology. Let $\mathcal{M}_f(X)$ denote the subspace of of $\mathcal{M}(X)$ consisting of all $f-$invariant measures. For $\mu\in\mathcal{M}_f(X)$, let $h_\mu(f)$ denote the entropy of $f$ with respect to $\mu$, and let $\Phi_*(\mu)$ denote the following limit
$$\Phi_*(\mu)=\lim_{n\to\infty}\frac 1n \int\phi_nd\mu.$$
The existence of the above limit follows from a sub-additive argument. The authors in \cite{cfh} proved the following variational principle.

\begin{theorem}\label{subaddvariprin}
Let $f:X\to X$ be a continuous transformation on a compact metric space $X$, and $\Phi=\{\phi_n\}_{n\ge 1}$ a sub-additive potential on $X$, we have
$$P_X(f,\Phi)=\sup\Big\{h_\mu(f)+\Phi_*(\mu): \mu\in\mathcal{M}_f(X),~\Phi_*(\mu)\neq -\infty\Big\}.$$
\end{theorem}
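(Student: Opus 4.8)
The plan is to prove the two inequalities separately, following the strategy of \cite{cfh}. Write $P:=P_X(f,\Phi)$ and $V:=\sup\{h_\mu(f)+\Phi_*(\mu):\mu\in\mathcal M_f(X),\ \Phi_*(\mu)\neq-\infty\}$, and recall two standard facts: by Kingman's sub-additive ergodic theorem $\frac1n\phi_n(x)\to\Phi_*(\mu)$ for $\mu$-a.e.\ $x$ whenever $\mu$ is ergodic, while $\Phi_*(\mu)=\inf_n\frac1n\int\phi_n\,d\mu=\lim_n\frac1n\int\phi_n\,d\mu$ for any invariant $\mu$ by sub-additivity of $n\mapsto\int\phi_n\,d\mu$.

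First I would establish the variational inequality $V\le P$. Since both $h_\mu(f)$ and $\Phi_*(\mu)$ decompose as integrals over the ergodic components (the latter via Kingman's theorem applied to the invariant a.e.\ limit of $\frac1n\phi_n$), it suffices to treat an ergodic $\mu$ with $\Phi_*(\mu)>-\infty$. Fix $\gamma,\delta>0$. Using Egorov's theorem together with the a.e.\ convergence above, I would produce a set $A$ with $\mu(A)>1-\delta$ and an $N_0$ so that $\phi_n(x)\ge n(\Phi_*(\mu)-\gamma)$ for all $x\in A$ and $n\ge N_0$. Choosing a maximal $(n,\varepsilon)$-separated subset $E_n\subset A$ and invoking Katok's entropy formula so that $\frac1n\log\#E_n$ captures $h_\mu(f)$ in the iterated limit, the weight bound $\sum_{x\in E_n}e^{\phi_n(x)}\ge\#E_n\cdot e^{n(\Phi_*(\mu)-\gamma)}$ forces $\frac1n\log P_n(X,f,\Phi,\varepsilon)\ge\frac1n\log\#E_n+\Phi_*(\mu)-\gamma$; letting $n\to\infty$, then $\varepsilon\to0$, then $\gamma,\delta\to0$ yields $P\ge h_\mu(f)+\Phi_*(\mu)$, hence $P\ge V$.

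For the reverse inequality $P\le V$, fix $\varepsilon>0$ and for each $n$ choose an $(n,\varepsilon)$-separated set $E_n$ nearly realizing $P_n(X,f,\Phi,\varepsilon)$. Form the weighted empirical measure $\sigma_n=\big(\sum_{x\in E_n}e^{\phi_n(x)}\big)^{-1}\sum_{x\in E_n}e^{\phi_n(x)}\delta_x$ and its average $\mu_n=\frac1n\sum_{k=0}^{n-1}f^k_*\sigma_n$; pass to a subsequence along which $\mu_{n_j}\to\mu$ weakly, so $\mu$ is $f$-invariant. Taking a partition $\mathcal P$ with $\diam\mathcal P<\varepsilon$ and $\mu(\partial\mathcal P)=0$, the separation of $E_n$ forces each atom of $\bigvee_{k=0}^{n-1}f^{-k}\mathcal P$ to carry at most one point of $E_n$, whence the elementary entropy identity for $\sigma_n$ gives $\frac1n\log\sum_{x\in E_n}e^{\phi_n(x)}=\frac1n H_{\sigma_n}\!\big(\bigvee_{k=0}^{n-1}f^{-k}\mathcal P\big)+\frac1n\int\phi_n\,d\sigma_n$, and I would bound the two terms on the right separately in the limit.

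The heart of the argument is controlling these two terms, and I expect this to be the main obstacle. For the potential term I would use the sub-additive telescoping estimate: for fixed $m$, summing $\phi_n(x)\le\sum_k\phi_m(f^kx)+(\text{bounded remainder})$ over the residues modulo $m$ yields $\frac1n\int\phi_n\,d\sigma_n\le\frac1m\int\phi_m\,d\mu_n+o(1)$ as $n\to\infty$; weak convergence then gives $\limsup_j\frac1{n_j}\int\phi_{n_j}\,d\sigma_{n_j}\le\frac1m\int\phi_m\,d\mu$, and letting $m\to\infty$ produces the bound $\Phi_*(\mu)$. For the entropy term I would run the classical Misiurewicz argument---splitting $\{0,\dots,n-1\}$ into arithmetic progressions, using concavity of $H$ and the invariance built into $\mu_n$, and invoking $\mu(\partial\mathcal P)=0$ so that $H_{\mu_{n_j}}$ on each finite refinement converges to $H_\mu$---to conclude $\limsup_j\frac1{n_j}H_{\sigma_{n_j}}(\cdots)\le h_\mu(f)$. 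Combining gives $\limsup_n\frac1n\log P_n(X,f,\Phi,\varepsilon)\le h_\mu(f)+\Phi_*(\mu)\le V$; since the left side increases to $P$ as $\varepsilon\to0$ (extracting a further limit measure if needed), we obtain $P\le V$. The delicate points are reconciling the weighted measures $\sigma_n$ with the averaged measures $\mu_n$ in both estimates, handling the possibility $\Phi_*(\mu)=-\infty$ uniformly, and ensuring the two passages to the limit ($n\to\infty$, then $m\to\infty$) interact correctly with the weak-$*$ convergence.
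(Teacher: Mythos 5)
You should note that the paper itself gives no proof of this theorem --- it is quoted directly from \cite{cfh} --- and your sketch reconstructs exactly the argument of that reference: the lower bound $V\le P$ via ergodic decomposition, Egorov, and Katok's entropy formula, and the upper bound $P\le V$ via the weighted Misiurewicz argument with $\sigma_n$, $\mu_n$, the entropy identity $\frac1n\log\sum_{x\in E_n}e^{\phi_n(x)}=\frac1nH_{\sigma_n}\big(\bigvee_{k=0}^{n-1}f^{-k}\mathcal P\big)+\frac1n\int\phi_n\,d\sigma_n$, and the sub-additive telescoping bound $\frac1n\int\phi_n\,d\sigma_n\le\frac1m\int\phi_m\,d\mu_n+C_m/n$. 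Your proposal is correct and takes essentially the same approach, including proper attention to the genuine delicate points (atomicity of the refined partition, $\mu(\partial\mathcal P)=0$, continuity of each $\phi_m$ for the weak-$*$ passage, and the case $\Phi_*(\mu)=-\infty$).
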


In general, it is still an open question that whether the super-additive topological pressure satisfies the variational principle. However, in the case of average conformal hyperbolic setting, following the same proof of Theorem 5.1 in \cite{bch}, on can prove the  following theorem.
\begin{theorem}\label{supaddvariprin}
Let $\Lambda$ be a locally maximal average conformal hyperbolic set of a $C^1$ diffeomorphism $f$. Let $\mathcal{F}=\{-t\log\|D_xf^n|_{E^u(x)}\|\}_{n\ge 1}$ for $t\geq 0$ be a super-additive potential. Then we have $$P_\Lambda(f,\mathcal{F})=\sup\Big\{h_\mu(f)+\mathcal{F}_*(\mu): \mu\in\mathcal{M}_f(\Lambda)\Big\},$$ where $\mathcal{M}_f(\Lambda)$ is the space of all $f-$invariant Borel probability measures on $\Lambda$ and \[\mathcal{F}_*(\mu)= \lim_{n\to\infty}\frac 1n \int -t\log\|D_xf^n|_{E^u(x)}\|d\mu.\]
\end{theorem}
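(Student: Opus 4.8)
The plan is to reduce the super-additive pressure of $\mathcal{F}$ to the \emph{sub}-additive pressure of a companion sequence built from the conorm, for which Theorem~\ref{subaddvariprin} applies verbatim, and then to force the two to agree using the average conformal hypothesis through Lemma~\ref{ave-conver}.

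First I would record that $\mathcal{F}=\{-t\log\|D_xf^n|_{E^u(x)}\|\}_{n\ge1}$ is super-additive: submultiplicativity of the operator norm gives $\log\|D_xf^{n+m}|_{E^u(x)}\|\le\log\|D_{f^nx}f^m|_{E^u(f^nx)}\|+\log\|D_xf^n|_{E^u(x)}\|$, and multiplying by $-t\le0$ reverses the inequality. Alongside it I introduce $\mathcal{G}=\{g_n\}_{n\ge1}$ with $g_n(x)=-t\log m(D_xf^n|_{E^u(x)})$; supermultiplicativity of the conorm, $m(D_xf^{n+m}|_{E^u(x)})\ge m(D_{f^nx}f^m|_{E^u(f^nx)})\,m(D_xf^n|_{E^u(x)})$, shows in the same way that $\mathcal{G}$ is sub-additive. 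Since $\Lambda$ is compact and the splitting is continuous, each $g_n$ and each $-t\log\|D_xf^n|_{E^u(x)}\|$ is continuous and grows at most linearly in $n$, so $\mathcal{F}_*(\mu)$ and $\mathcal{G}_*(\mu)$ are finite for every $\mu$; in particular the restriction $\Phi_*(\mu)\ne-\infty$ in Theorem~\ref{subaddvariprin} is vacuous and the supremum there runs over all of $\mathcal{M}_f(\Lambda)$.

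The heart of the argument is that $\mathcal{F}$ and $\mathcal{G}$ are uniformly asymptotic. Setting $\gamma_n:=\frac{t}{n}\sup_{x\in\Lambda}\bigl(\log\|D_xf^n|_{E^u(x)}\|-\log m(D_xf^n|_{E^u(x)})\bigr)$, Lemma~\ref{ave-conver} (case $i=u$) gives $\gamma_n\to0$, and $m(\cdot)\le\|\cdot\|$ yields the pointwise bound $0\le g_n(x)+t\log\|D_xf^n|_{E^u(x)}\|\le n\gamma_n$ for all $x\in\Lambda$. Hence for every $(n,\varepsilon)$-separated set $F\subset\Lambda$,
\[
e^{-n\gamma_n}\sum_{x\in F}e^{g_n(x)}\le\sum_{x\in F}e^{-t\log\|D_xf^n|_{E^u(x)}\|}\le\sum_{x\in F}e^{g_n(x)}.
\]
Taking the supremum over such $F$ gives $e^{-n\gamma_n}P_n(\Lambda,f,\mathcal{G},\varepsilon)\le P_n(\Lambda,f,\mathcal{F},\varepsilon)\le P_n(\Lambda,f,\mathcal{G},\varepsilon)$; applying $\frac1n\log$, letting $n\to\infty$ then $\varepsilon\to0$, and using $\gamma_n\to0$, I obtain $P_\Lambda(f,\mathcal{F})=P_\Lambda(f,\mathcal{G})$. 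Integrating the same pointwise bound against an arbitrary $\mu\in\mathcal{M}_f(\Lambda)$, dividing by $n$ and letting $n\to\infty$ gives $\mathcal{F}_*(\mu)=\mathcal{G}_*(\mu)$.

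It then remains only to invoke Theorem~\ref{subaddvariprin} for the sub-additive potential $\mathcal{G}$, namely $P_\Lambda(f,\mathcal{G})=\sup\{h_\mu(f)+\mathcal{G}_*(\mu):\mu\in\mathcal{M}_f(\Lambda)\}$, and to substitute the two identities $P_\Lambda(f,\mathcal{F})=P_\Lambda(f,\mathcal{G})$ and $\mathcal{F}_*(\mu)=\mathcal{G}_*(\mu)$ to read off the stated formula. I expect the main obstacle to be the comparison step $P_\Lambda(f,\mathcal{F})=P_\Lambda(f,\mathcal{G})$: one must check that the uniform gap control of Lemma~\ref{ave-conver} genuinely survives both the supremum over separated sets and the double limit defining the pressure, so that no exponential loss is incurred. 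Everything downstream is then a direct application of the sub-additive variational principle, which is exactly why the super-additive principle, open in general, becomes accessible in the average conformal setting.
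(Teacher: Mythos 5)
Your proof is correct and follows essentially the same route as the paper's: the paper defers to the proof of Theorem 5.1 in \cite{bch}, which is precisely this comparison of the super-additive norm sequence with the sub-additive conorm sequence $\{-t\log m(D_xf^n|_{E^u(x)})\}_{n\ge 1}$ via the uniform gap estimate of Lemma \ref{ave-conver}, followed by an application of the sub-additive variational principle (Theorem \ref{subaddvariprin}). Your $\gamma_n$-bookkeeping through the separated-set sums, the resulting identities $P_\Lambda(f,\mathcal{F})=P_\Lambda(f,\mathcal{G})$ and $\mathcal{F}_*(\mu)=\mathcal{G}_*(\mu)$, and the check that $\mathcal{G}_*(\mu)\neq-\infty$ (so the supremum runs over all of $\mathcal{M}_f(\Lambda)$) are exactly the ingredients of that argument.
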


\begin{remark} In the case of average conformal hyperbolic setting, it follows from Lemma \ref{ave-conver} and Theorems \ref{subaddvariprin} and \ref{supaddvariprin} that
\[
P_\Lambda\Big(f,\big\{-t\log\|D_xf^n|_{E^u(x)}\|\big\}\Big)=P_\Lambda\Big(f,\big\{-t\log m(D_xf^n|_{E^u(x)})\big\}\Big)
\]
for any $t\ge 0$.% In fact, by Lemma \ref{ave-conver} one has
%\[\lim_{n\to\infty}\frac{1}{n}\int\log\|D_xf^n|_{E^u(x)}\|d\mu=\lim_{n\to\infty}\frac{1}{n}\int\log %m(D_xf^n|_{E^u(x)})d\mu\]
%for any invariant measure $\mu$. 
%Theorem \ref{supaddvariprin} tells us that
%\begin{eqnarray*}
%\begin{aligned}
%&\ \ \ \ P_\Lambda\Big(f,\big\{-t\log\|D_xf^n|_{E^u(x)}\|\big\}\Big)\\
%&=\sup\Big\{h_\mu(f)-t\lim_{n\to\infty}\frac{1}{n}\int\log\|D_xf^n|_{E^u(x)}\|d\mu:\mu\in\mathcal{M}_f(\Lambda)\Big\},
%\end{aligned}
%\end{eqnarray*}
%and Theorem \ref{subaddvariprin} tells us that
%\begin{eqnarray*}
%\begin{aligned}
%&\ \ \ \ P_\Lambda\Big(f,\big\{-t\log m(D_xf^n|_{E^u(x)})\big\}\Big)\\
%&=\sup\Big\{h_\mu(f)-t\lim_{n\to\infty}\frac{1}{n}\int\log %m(D_xf^n|_{E^u(x)})d\mu:\mu\in\mathcal{M}_f(\Lambda)\Big\}.
%\end{aligned}
%\end{eqnarray*}
%By Lemma , \[\lim_{n\to\infty}\frac 1n \log\|D_xf^n|_{E^u(x)}\|=\lim_{n\to\infty}\frac 1n \log %m(D_xf^n|_{E^u(x)})\] uniformly on $\Lambda$. %This implies that
%
%Hence, we have that
%\[
%P_\Lambda\Big(f,\big\{-t\log\|D_xf^n|_{E^u(x)}\|\big\}\Big)=P_\Lambda\Big(f,\big\{-t\log %m(D_xf^n|_{E^u(x)})\big\}\Big).
%\]
\end{remark}

%%%%%%%%%%%%%%%%%%%%%%%%%%%%%%%%%%%%%%%%%%%%

\section{Proof of  Main Result}\label{proof}
This section provides the proof of the main result stated in Section \ref{statement}.

The following theorem shows that the conjugacy map $h_g$ in Section \ref{ns} restricted to local unstable and stable  manifolds are H\"{o}lder continuous. %provided that the maximal norm and minimum norm of $Df$ restricted to unstable/stable direction are arbitrarily close.
\begin{theorem}\label{th3.1}
Let $f: M \to M$ be a $C^1$ diffeomorphism, and $\Lambda\subseteq M$  a locally maximal average conformal hyperbolic set.
Then for any $r\in(0,1)$, there is $C>0$ (depending on $r$) and a neighborhood $\mathcal{U}^f_r$ of $f$ in $\mbox{Diff}^1(M)$ such that, for any $g\in\mathcal{U}^f_r$ and any $x\in\Lambda$, $h_g|_{W^u_\beta(f,x)\bigcap\Lambda},\ h_g|_{W^s_\beta(f,x)\bigcap\Lambda}$ and $(h_g|_{W^u_\beta(f,x)\bigcap\Lambda})^{-1},\ (h_g|_{W^s_\beta(f,x)\bigcap\Lambda})^{-1}$ are $(C, r)-$H\"{o}lder continuous.
\end{theorem}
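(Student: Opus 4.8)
The plan is to reduce everything to a single estimate on one local unstable leaf and then exploit average conformality to push the resulting H\"older exponent above $r$. The key structural fact is that the conjugacy respects the invariant foliations: since $h_g$ intertwines the dynamics, $h_g\circ f^{n}=g^{n}\circ h_g$ on $\Lambda$, and the global unstable set of $f$ is characterized dynamically by $d(f^{-n}y,f^{-n}x)\to 0$, uniform continuity of $h_g$ forces $h_g\big(W^u_\beta(f,x)\cap\Lambda\big)\subset W^u(g,h_g(x))\cap\Lambda_g$, and symmetrically for stable sets. Thus it suffices to estimate $d(h_g(y),h_g(z))$ for $y,z$ on a common local unstable leaf of $f$, knowing that their images lie on a common unstable leaf of $g$; on such a leaf only the unstable expansion of $f$ and of $g$ is relevant, and the H\"older exponent will come out as a quotient of a \emph{minimal} expansion rate of $g$ over a \emph{maximal} expansion rate of $f$, which average conformality drives to $1$.

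First I would fix the hyperbolicity data. For $g$ in a small $C^1$-neighbourhood of $f$ the set $\Lambda_g$ is uniformly hyperbolic with constants close to those of $f$, and the leaves $W^u_\beta(g,h_g(x))$ are $C^1$-close to $W^u_\beta(f,x)$ through the maps $\theta^u_{g,x}$ of Section \ref{ns}, so the leaf metrics $d_u$ for $f$ and for $g$ are uniformly comparable to the ambient metric on these disks. Working with a block map $F=f^{N}$ (and $G=g^{N}$) I record a uniform upper bound $\|D_xf^{N}|_{E^u(x)}\|\le b$ and a uniform lower bound $m(D_zg^{N}|_{E^u_g(z)})\ge a>1$. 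Then, for $y,z$ on the same $f$-unstable leaf with $d_u(y,z)$ small, I let $n$ be the last block-time at which $d_u(F^{k}y,F^{k}z)\le\beta_0$ for all $k\le n$, where $\beta_0$ is a fixed threshold. Comparing the definite size $\asymp\beta_0$ reached at time $n$ with the upper bound $b^{\,n}d_u(y,z)$ yields a lower bound $n\ge(\const-\log d_u(y,z))/\log b$.

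Next I transfer this to the $g$-side. At the separation time $F^{n}y,F^{n}z$ are at distance $\asymp\beta_0$, so their $h_g$-images have ambient distance at most $\diam\Lambda_g$. Since $h_g$ intertwines $F^{n}$ with $G^{n}$ and the images lie on one $g$-unstable leaf, applying $G^{-n}$ contracts them by at most $a^{-n}$, whence $d(h_g(y),h_g(z))\le\const\cdot a^{-n}$. Feeding in the lower bound for $n$ gives $d(h_g(y),h_g(z))\le\const\cdot d_u(y,z)^{\,\log a/\log b}$, i.e.\ $h_g$ is H\"older on the leaf with exponent $\log a/\log b=\log m(Dg^{N}|_{E^u_g})/\log\|Df^{N}|_{E^u}\|$. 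Here Lemma \ref{ave-conver} enters: by average conformality $\tfrac1N\big(\log\|Df^{N}|_{E^u}\|-\log m(Df^{N}|_{E^u})\big)\to0$ uniformly, so for $N$ large the quotient $\log m(Df^{N}|_{E^u})/\log\|Df^{N}|_{E^u}\|$ exceeds a chosen $r'\in(r,1)$; combined with the $C^1$-closeness of $g$ to $f$, which makes $\log m(Dg^{N}|_{E^u_g})$ close to $\log m(Df^{N}|_{E^u})$, the exponent can be made larger than $r$ after first choosing $N$ large and then $g$ in a sufficiently small neighbourhood $\mathcal{U}^f_r$. Since the leaves have bounded diameter, an exponent exceeding $r$ upgrades to $(C,r)$-H\"older continuity after adjusting $C$.

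The remaining cases follow by symmetry: the inverse $(h_g|_{W^u_\beta(f,x)\cap\Lambda})^{-1}$ is handled by exchanging the roles of $f$ and $g$, giving exponent $\log m(Df^{N}|_{E^u})/\log\|Dg^{N}|_{E^u_g}\|\to1$, and the stable statements are obtained by running the identical argument for $f^{-1}$, for which the unstable direction is replaced by $E^s$ and $\Lambda$ is again average conformal. The main obstacle I anticipate is not the counting but the uniformity bookkeeping: one must keep $a,b$, the comparability of leaf and ambient metrics, and the containment $h_g(W^u_\beta(f,x))\subset W^u(g,h_g(x))$ uniform over all $x\in\Lambda$ and $g\in\mathcal{U}^f_r$, and one must control the $o(N)$ of Lemma \ref{ave-conver} together with the perturbation of $m(Dg^{N}|_{E^u_g})$ simultaneously, since the neighbourhood $\mathcal{U}^f_r$ is allowed to depend on the block length $N$ chosen to realize the exponent $r$.
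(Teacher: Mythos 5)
Your reduction to a single unstable leaf and the dynamical characterization of the leaves under $h_g$ are fine, and your scheme does prove that $h_g$ is H\"older with \emph{some} exponent; but it cannot reach an arbitrary exponent $r\in(0,1)$, and that is the entire content of Theorem \ref{th3.1}. The gap is in the step where you invoke Lemma \ref{ave-conver}: your exponent is $\log a/\log b$ with $b=\sup_{x}\|D_xf^N|_{E^u(x)}\|$ and $a=\inf_{z}m(D_zg^N|_{E^u_g(z)})$, i.e.\ a \emph{global} infimum of minimal norms divided by a \emph{global} supremum of maximal norms, taken at different points of $\Lambda$. Average conformality controls only the ratio $\|D_xf^N|_{E^u(x)}\|/m(D_xf^N|_{E^u(x)})$ at one and the same point $x$ (uniformly in $x$); it says nothing about $\sup_x\|D_xf^N|_{E^u(x)}\|$ versus $\inf_x m(D_xf^N|_{E^u(x)})$, because different ergodic measures on an average conformal set may carry different Lyapunov exponents. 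Concretely, take a nonlinear horseshoe of a surface diffeomorphism (automatically average conformal, since $\dim E^u=\dim E^s=1$) with two fixed points whose unstable eigenvalues are $2$ and $3$: then $b\ge 3^N$ while $a\le 2^N e^{o(N)}$ for every $g$ close to $f$, so $\log a/\log b\le \log 2/\log 3+o(1)$, and your exponent stays bounded away from $1$ no matter how large $N$ is chosen and how small the neighbourhood $\mathcal{U}^f_r$ is. Your assertion that ``the quotient $\log m(Df^{N}|_{E^u})/\log\|Df^{N}|_{E^u}\|$ exceeds a chosen $r'$'' is true \emph{pointwise}, but the distance estimates you actually use, $d_u(F^ny,F^nz)\le b^{\,n}d_u(y,z)$ and $d(h_g(y),h_g(z))\le\const\cdot a^{-n}$, are phrased with the global constants, and that is exactly where the pointwise information is thrown away.

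The repair is the orbit-wise bookkeeping of the paper's Claim \ref{holdercontinuous}. Instead of two constants one keeps the products along the matched orbits: $d_u(F^ny,F^nz)\le\prod_{j}\|D_{\xi_j}F|_{E^u(\xi_j)}\|\,d_u(y,z)$ with $\xi_j$ between $F^jy$ and $F^jz$, and on the $g$-side a lower bound by $\prod_j m\big(D_{\tau_j}((\theta^u_{j+1})^{-1}\circ G\circ\theta^u_j)\big)$ with $\tau_j$ lying on the image leaf at distance less than $4\delta$ from $\xi_j$. Then three \emph{local} comparisons --- the $C^1$-perturbation estimate $m\big(D_{\tau_j}(\cdots)\big)\ge m(D_{\tau_j}F)e^{-N\varepsilon}$, continuity of $DF$ along the leaf at scale $4\delta$, and the pointwise average-conformality bound $m(D_{\xi_j}F)\ge\|D_{\xi_j}F\|e^{-2N\varepsilon}$ --- combine to give $m\big(D_{\tau_j}(\cdots)\big)\ge\|D_{\xi_j}F\|e^{-4N\varepsilon}\ge\|D_{\xi_j}F\|^r$, the last inequality because $\|D_{\xi_j}F\|^{1-r}\ge\tau^{N(1-r)}\ge e^{4N\varepsilon}$ once $\varepsilon$ is fixed with $\tau e^{-4\varepsilon}\ge\tau^r$, where $\tau>1$ is the uniform minimal expansion of $\phi_u(f,\cdot)$. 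The same product $\prod_j\|D_{\xi_j}F|_{E^u(\xi_j)}\|^r$ then appears on both sides of the resulting inequality and cancels, which is precisely what your global constants $a^n$ and $b^n$ cannot do.
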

\begin{proof}
Let $$\tau:=\inf\{\phi_u(f,\xi):\ \xi\in  W^u_\beta(f,x),\ x\in\Lambda\}>1.$$ For any $r\in(0,1)$, there exists $\varepsilon>0$ such that $\tau e^{-4\varepsilon}\geq\tau^r$. Since $f$ is average conformal on $\Lambda$, by Lemma \ref{ave-conver}  there exists a positive integer $N(\varepsilon)$
such that for any $n\geq N(\varepsilon)$ and $x\in\Lambda$
$$1\leq\frac{\|D_xf^n|_{E^u(x)}\|}{m(D_xf^n|_{E^u(x)})}< e^{n\varepsilon} ~~\text{and}~~ 1\leq\frac{\|D_xf^n|_{E^s(x)}\|}{m(D_xf^n|_{E^s(x)})}< e^{n\varepsilon}.$$
Fix any $N\geq N(\varepsilon)$, let $F:=f^{N}.$ Since $\Lambda$ is a locally maximal hyperbolic  set for $f$, $\Lambda$ is also a locally maximal hyperbolic  set for $F$, and the above inequality shows that $F$ satisfies
\begin{equation}\label{daxiaomo}
1\leq\frac{\|D_xF|_{E^u(x)}\|}{m(D_xF|_{E^u(x)})}< e^{N\varepsilon} ~~\text{and}~~ 1\leq\frac{\|D_xF|_{E^s(x)}\|}{m(D_xF|_{E^s(x)})}< e^{N\varepsilon}\ \text{for all}\ x\in\Lambda.
\end{equation}

 Recall that $d_u$  denote the metric induced by the Riemannian structure on the unstable foliation $W^u$ and let $D_yF|_{E^u(y)}:=D_yF|_{T_yW^u_\beta(F,x)}$ denote the derivative of $F$ in the unstable direction for any $y\in W^u_\beta(F,x)$, $x\in\Lambda$.

For the above $\varepsilon>0$, there exists $\delta>0$ such that the following is true for all $x\in\Lambda$,
\begin{enumerate}
\item[(1)] for $y,z\in W^u_\beta(F,x)$, if $d_u(y,z)\leq4\delta$, then $$e^{- \frac12 N\varepsilon}\leq\frac{\|D_yF|_{E^u(y)}\|}{\|D_zF|_{E^u(z)}\|}\leq e^{\frac12 N\varepsilon}\ \text{and}\ e^{-\frac12 N\varepsilon}\leq\frac{m(D_yF|_{E^u(y)})}{m(D_zF|_{E^u(z)})}\leq e^{\frac12 N\varepsilon}.$$
\end{enumerate}
Take $\mathcal{U}^F_r$ a small neighborhood of $F$ in $\mbox{Diff}^1(M)$ such that for all $G\in\mathcal{U}^F_r$ and $x\in\Lambda$, we have
\begin{enumerate}
\item[(2)] $\displaystyle{d_u\big{(}(\theta^u_{G,x})^{-1}\circ h_G(y),y\big)\leq\frac{\delta}{2}}$ for every $y\in W^u_\beta(F,x)\bigcap\Lambda$;
\item[(3)] $\displaystyle{e^{-N\varepsilon} \leq \frac{m\big(D_y((\theta^u_{G,F(x)})^{-1}\circ G\circ \theta^u_{G,x})\big) }{ m\big(D_yF|_{E^u(y)}\big) } \leq e^{N\varepsilon}}$ for every $y\in W^u_\beta(F,x)$.
\end{enumerate}
Since $F$ satisfies (\ref{daxiaomo}) on $\Lambda$,
\begin{eqnarray}\label{ineq-holder}
1\leq\frac{\|D_yF|_{E^u(y)}\|}{m(D_yF|_{E^u(y)})}\leq e^{2N\varepsilon}
\end{eqnarray}
for every $y\in W^u_\beta(F,x),\ x\in\Lambda$.  By the following Claim \ref{holdercontinuous}, there exists $C>0$ (depending only on $r$) such that $h_G|_{W^u_\beta(F,x)\cap\Lambda}$, $h_G|_{W^s_\beta(F,x)\cap\Lambda}$, $(h_G|_{W^u_\beta(F,x)\cap\Lambda})^{-1}$ and $(h_G|_{W^s_\beta(F,x)\cap\Lambda})^{-1}$ are $(C,r)-$H\"{o}lder continuous, for any $G\in\mathcal{U}^F_r$.

Notice that $F=f^{N}$ and $\Lambda$ is a hyperbolic set of $f$, thus
$$W^u_\beta(F,x)\cap\Lambda=W^u_\beta(f,x)\cap\Lambda\ \text{and}\ W^s_\beta(F,x)\cap\Lambda=W^s_\beta(f,x)\cap\Lambda.$$
One may choose a sufficiently small open neighborhood $\mathcal{U}^f_r$ of $f$ in $\mbox{Diff}^1(M)$ such that each $g\in\mathcal{U}^f_r$ satisfies that $g^{N}\in\mathcal{U}^F_r$. Put $G:=g^{N}$. Note that $h_g=h_G$, $\Lambda_G=\Lambda_g$ and so $W^u_\beta(G,x)\cap\Lambda_G=W^u_\beta(g,x)\cap\Lambda_g, W^s_\beta(G,x)\cap\Lambda_G=W^s_\beta(g,x)\cap\Lambda_g$. The above assertions yield that  $h_g|_{W^u_\beta(f,x)\bigcap\Lambda},\ h_g|_{W^s_\beta(f,x)\bigcap\Lambda}$ and $(h_g|_{W^u_\beta(f,x)\bigcap\Lambda})^{-1}$, $(h_g|_{W^s_\beta(f,x)\bigcap\Lambda})^{-1}$ are $(C, r)-$H\"{o}lder continuous  for any $g\in\mathcal{U}^f_r$ and any $x\in\Lambda$.
\end{proof}

%To prove the H\"{o}lder continuity of $$h_g|_{W^u_\beta(f,x)\bigcap\Lambda},\ %h_g|_{W^s_\beta(f,x)\bigcap\Lambda}, (h_g|_{W^u_\beta(f,x)\bigcap\Lambda})^{-1}\ \text{and}\ %(h_g|_{W^s_\beta(f,x)\bigcap\Lambda})^{-1},$$ we need the following proposition.

\begin{claim}\label{holdercontinuous}
For the above $r$, $F$ and $\mathcal{U}^F_r$,  there is $C>0$ (depending only on $r$) such that $h_G|_{W^u_\beta(F,x)\cap\Lambda}$, $h_G|_{W^s_\beta(F,x)\cap\Lambda}$, $(h_G|_{W^u_\beta(F,x)\cap\Lambda})^{-1}$ and $(h_G|_{W^s_\beta(F,x)\cap\Lambda})^{-1}$ are $(C,r)-$H\"{o}lder continuous, for any $G\in\mathcal{U}^F_r$.
\end{claim}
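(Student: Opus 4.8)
The plan is to establish the $(C,r)$-H\"older bound only for $h_G$ restricted to $W^u_\beta(F,x)\cap\Lambda$; the stable estimate then follows by applying the identical argument to $F^{-1}$, whose local unstable manifolds are the local stable manifolds of $F$, and the estimates for the inverse maps follow by interchanging the roles of $F$ and $G$, since \eqref{daxiaomo}, \eqref{ineq-holder} and conditions (1)--(3) are two-sided and persist after shrinking $\mathcal{U}^F_r$.

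Fix $y,z\in W^u_\beta(F,x)\cap\Lambda$ with $d_u(y,z)$ small. First I would iterate the pair forward until it reaches a definite scale: let $n=n(y,z)\ge0$ be the largest integer with $d_u(F^ky,F^kz)\le\delta$ for all $0\le k\le n$. Since $F$ expands the unstable leaves and $\|DF|_{E^u}\|$ is bounded, $d_u(F^ny,F^nz)$ lies in a fixed band $(\delta',\delta]$. Along the arc joining $y$ to $z$, Lemma \ref{ave-conver} together with the conformal defect \eqref{ineq-holder} make the unstable metric, the minimum norm, and $\phi_u(F,\cdot)\ge\tau^N$ mutually comparable up to a factor $e^{N\varepsilon}$ per step, and a standard bounded-distortion estimate based on condition (1) gives
\[
d_u(y,z)\ \asymp\ \delta\,P^{-1},\qquad P:=\prod_{k=0}^{n-1}m\big(D_{\eta_k}F|_{E^u}\big)\ \ge\ \tau^{Nn}e^{-Nn\varepsilon},
\]
with the implied constant independent of $n$.

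Next I would transport this through the conjugacy. As $h_G$ intertwines $F$ and $G$ we have $h_G\circ F^{n}=G^{n}\circ h_G$, so $d_u(h_Gy,h_Gz)=d_u\big(G^{-n}(h_GF^ny),\,G^{-n}(h_GF^nz)\big)$. The points $F^ny,F^nz$ sit at the fixed scale $\delta$, hence their $h_G$-images are at bounded distance $C_0$ (uniform continuity, with $h_G$ being $C^0$-close to the chart inclusion by condition (2)); contracting by $G^{-n}$ and comparing each $G$-factor to the corresponding $F$-factor via condition (3) yields $d_u(h_Gy,h_Gz)\le C_0\,Q^{-1}$ with $Q:=\prod_{k=0}^{n-1}m(D\,G|_{E^u})\ge e^{-Nn\varepsilon}P$. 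The key point here is that condition (2) keeps the $G$-orbit $\{G^kh_Gy\}_{0\le k\le n}$ shadowing the $\theta^u$-chart image of $\{F^ky\}$ within the scale $4\delta$ on which (1) and (3) hold, so the factor-by-factor comparison is legitimate for every $0\le k<n$.

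Finally I would combine the two estimates. From $d_u(y,z)\asymp\delta P^{-1}$, $d_u(h_Gy,h_Gz)\le C_0 e^{Nn\varepsilon}P^{-1}$ and $P\ge\tau^{Nn}e^{-Nn\varepsilon}$ one gets $Nn\le(\log\tau-\varepsilon)^{-1}\log P$, whence $d_u(h_Gy,h_Gz)$ is bounded by a constant times $P^{-(\log\tau-2\varepsilon)/(\log\tau-\varepsilon)}$. An elementary inequality gives $(\log\tau-2\varepsilon)/(\log\tau-\varepsilon)\ge(\log\tau-4\varepsilon)/\log\tau\ge r$, the last step being exactly the choice $\tau e^{-4\varepsilon}\ge\tau^r$; hence $d_u(h_Gy,h_Gz)\le C\,d_u(y,z)^r$ with $C=C(r)$. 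The main obstacle is the bookkeeping in the third paragraph: one must check that $\{G^kh_Gy\}_{0\le k\le n}$ stays close enough to the chart image of $\{F^ky\}$ for condition (3) to apply at every step, while simultaneously running the bounded-distortion estimate so that the comparison between $d_u(y,z)$ and $P$ has a constant independent of $n$. Once the orbit-matching and distortion are under control, the remaining exponent computation is routine, and the factor $4$ in $\tau e^{-4\varepsilon}\ge\tau^r$ is precisely what absorbs the several independent $e^{N\varepsilon}$ losses coming from the conformal defect, the chart displacement, and the $F$-to-$G$ derivative comparison.
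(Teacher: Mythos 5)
Your overall architecture---iterate forward to the fixed scale $\delta$, compare the $F$-expansion with the chart-conjugated $G$-expansion factor by factor via conditions (1)--(3), and absorb the per-step $e^{N\varepsilon}$ losses into the exponent through $\tau e^{-4\varepsilon}\ge\tau^r$---is the same as the paper's. But one step fails as stated: the claim that ``a standard bounded-distortion estimate based on condition (1) gives $d_u(y,z)\asymp\delta P^{-1}$ with the implied constant independent of $n$.'' For a $C^1$ diffeomorphism no such estimate exists: condition (1) only controls the ratio of derivatives at two nearby points by $e^{N\varepsilon/2}$ \emph{per iterate}, so comparing the mean-value points entering your lower bound with those entering the upper bound (together with the conformal defect \eqref{ineq-holder}, needed to pass between $\|\cdot\|$ and $m(\cdot)$) costs a factor of order $e^{cNn\varepsilon}$, which grows with $n$. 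Uniform bounded distortion is exactly what is available in the $C^{1+\gamma}$ setting of McCluskey--Manning and what is \emph{not} available here; assuming it erases the main difficulty of the $C^1$ case. Your final exponent computation then undercounts the losses: it absorbs only the single $e^{Nn\varepsilon}$ coming from the $F$-to-$G$ comparison, while the $F$-side distortion and conformal-defect losses have been hidden inside the false ``$\asymp$''.

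The gap is repairable, and the repair is essentially the paper's proof. The paper never compares two different families of mean-value points on the $F$-side: it uses the \emph{same} points $\xi_j$ both in the per-step inequality $m\big(D_{\tau_j}((\theta^u_{j+1})^{-1}\circ G\circ\theta^u_j)\big)\ge\|D_{\xi_j}F|_{E^u(\xi_j)}\|^r$ (where all the losses, $e^{-N\varepsilon}$ from (3), $e^{-N\varepsilon}$ from (1), $e^{-2N\varepsilon}$ from \eqref{ineq-holder}, are absorbed at once using $\tau^{N(1-r)}e^{-4N\varepsilon}\ge 1$) and in the chain $2\delta\le 2\delta^r<2\,d_u(F^{M+1}y,F^{M+1}z)^r\le 2\,d_u(y,z)^r\prod_{j=0}^{M}\|D_{\xi_j}F|_{E^u(\xi_j)}\|^r$; the products $\prod_{j=0}^{M-1}\|D_{\xi_j}F|_{E^u(\xi_j)}\|^r$ then cancel identically, leaving the single bounded factor $\|D_{\xi_M}F|_{E^u(\xi_M)}\|^r$, so no $n$-uniform distortion estimate is ever needed. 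If you insist on the global quantities $P$ and $Q$, you must carry the distortion losses explicitly as factors $e^{cNn\varepsilon}$ and redo the exponent arithmetic; a careful count (using the sharp $e^{N\varepsilon/2}$ from condition (1)) shows the total per-step loss still does not exceed $4N\varepsilon$, so the hypothesis $\tau e^{-4\varepsilon}\ge\tau^r$ still closes the argument---but that bookkeeping is precisely what your write-up omits.
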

\begin{proof}Let $x\in\Lambda$ and $y,z\in W^u_\beta(F,x)\bigcap\Lambda$ with $d_u(y,z)\leq\delta$. For any integer $n>0$,
$$d_u(F^ny, F^nz)\leq d_u(y,z) \cdot \prod_{j=0}^{n-1}\|D_{\xi_j}F|_{E^u(\xi_j)}\|$$ and
$$d_u(F^ny, F^nz)\geq d_u(y,z) \cdot \prod_{j=0}^{n-1} m(D_{\eta_j}F|_{E^u(\eta_j)})$$
where $\xi_j$, $\eta_j$ are between $F^jy$ and $F^jz$. Let $M\geq0$ be the smallest integer such that
$$d_u(F^My, F^Mz)\leq\delta<d_u(F^{M+1}y, F^{M+1}z).$$

{\bf Case I: }If $M=0$, then by $(2)$ we have
\begin{eqnarray*}
\begin{aligned}
&\ \ \ \ d_u\Big((\theta^u_{G,x})^{-1}\circ h_G(y),(\theta^u_{G,x})^{-1}\circ h_G(z)\Big)\\
&\leq d_u\Big((\theta^u_{G,x})^{-1}\circ h_G(y),y\Big)+d_u\big(y,z\big)+d_u\Big(z,(\theta^u_{G,x})^{-1}\circ h_G(z)\Big)\\
&\leq \frac\delta2 + \delta +\frac\delta2\\
&= 2\delta\\
&\leq 2\delta^r\\
&< 2d_u\big(F (y),F (z)\big)^r\\
&\leq 2\|D_{\xi_0}F|_{E^u(\xi_0)}\|^rd_u(y,z)^r\\
&=2\|D_{\xi_M}F|_{E^u(\xi_M)}\|^rd_u(y,z)^r
\end{aligned}
\end{eqnarray*}

{\bf Case II: }If $M\geq 1$, let $\theta^u_j:=\theta^u_{G,F^j(x)}$ for $j\geq 0$, and by $(2)$ we have
\begin{equation}\label{equ3.1}
\begin{aligned}
&\ \ \ \ d_u\Big((\theta^u_j)^{-1}\circ h_G\circ F^j(y), (\theta^u_j)^{-1}\circ h_G\circ F^j(z)\Big)\\
&\leq d_u\Big((\theta^u_j)^{-1}\circ h_G\circ F^j(y), F^j(y)\Big)+d_u\big(F^j(y), F^j(z)\big)+d_u\Big(F^j(z), (\theta^u_j)^{-1}\circ h_G\circ F^j(z)\Big)\\
&\leq \frac{\delta}{2}+\delta+\frac{\delta}{2}\\
&= 2\delta
\end{aligned}
\end{equation}
for $j=0,1,\cdots, M$.  On the other hand,
\begin{equation}\label{equ3.2}
\begin{aligned}
&\ \ \ \ d_u\Big((\theta^u_M)^{-1}\circ h_G\circ F^M(y), (\theta^u_M)^{-1}\circ h_G\circ F^M(z)\Big) \\
&\geq d_u\Big((\theta^u_0)^{-1}\circ h_G(y), (\theta^u_0)^{-1}\circ h_G(z)\Big)\cdot \prod_{j=0}^{M-1}m\Big(D_{\tau_j}\big((\theta^u_{j+1})^{-1}\circ G\circ\theta^u_j\big)\Big)\\
\end{aligned}
\end{equation}
where $\tau_j$ is between $(\theta^u_j)^{-1}\circ h_G\circ F^j(y)$ and $(\theta^u_j)^{-1}\circ h_G\circ F^j(z)$, $0\leq j\leq M-1$. By ($2$), \eqref{equ3.1} and the positions of  $\xi_j$ and $\tau_j$, we have that
\begin{eqnarray*}
\begin{aligned}
d_u(\xi_j,\tau_j)&\leq d_u\Big(\xi_j,F^j(y)\Big)+d_u\Big(F^j(y),(\theta^u_j)^{-1}\circ h_G\circ F^j(y)\Big)\\
&\ \ +d_u\Big((\theta^u_j)^{-1}\circ h_G\circ F^j(y),(\theta^u_j)^{-1}\circ h_G\circ F^j(z)\Big)\\
&\leq \delta+\frac{\delta}{2}+2\delta\\
&< 4\delta.
\end{aligned}
\end{eqnarray*}
 It follows from $(1)$, $(3)$ and (\ref{ineq-holder}) that
 \begin{eqnarray*}
 \begin{aligned}
 m\Big(D_{\tau_j}\big((\theta^u_{j+1})^{-1}\circ G\circ\theta^u_j\big)\Big)&\geq m(D_{\tau_j}F|_{E^u(\tau_j)}) e^{-N\varepsilon}\\
 &\geq m (D_{\xi_j}F|_{E^u(\xi_j)}) e^{- 2N\varepsilon}\\
 &\geq \|D_{\xi_j}F|_{E^u(\xi_j)}\| e^{-4N\varepsilon}.
 \end{aligned}
 \end{eqnarray*}
 Since $\|D_{\xi_j}F|_{E^u(\xi_j)}\|\geq \phi_u(f^N,\xi_j) = \prod_{i=0}^{N-1} \phi_u(f,f^i(\xi_j))\geq \tau^N$ and $\tau e^{-4\varepsilon}\geq\tau^r$,
 \[
 \|D_{\xi_j}F|_{E^u(\xi_j)}\|^{1-r} e^{-4N\varepsilon}\geq \tau^{N(1-r)}e^{-4N\varepsilon}\ge 1.
 \]
 Hence,
 $$m\Big(D_{\tau_j}\big((\theta^u_{j+1})^{-1}\circ G\circ\theta^u_j\big)\Big)\geq\|D_{\xi_j}F|_{E^u(\xi_j)}\| e^{-4N\varepsilon}\geq \|D_{\xi_j}F|_{E^u(\xi_j)}\|^r.$$ Combine \eqref{equ3.1} and \eqref{equ3.2} that
\begin{eqnarray*}
\begin{aligned}
&\ \ \ \ d_u\big((\theta^u_0)^{-1}\circ h_G(y),(\theta^u_0)^{-1}\circ h_G(z)\big)\cdot \prod^{M-1}_{j=0} \|D_{\xi_j}F|_{E^u(\xi_j)}\|^r\\
&\leq d_u\big((\theta^u_0)^{-1}\circ h_G(y),(\theta^u_0)^{-1}\circ h_G(z)\big)\cdot \prod^{M-1}_{j=0} m\Big(D_{\tau_j}\big((\theta^u_{j+1})^{-1}\circ G\circ\theta^u_j\big)\Big)\\
&\leq d_u\big((\theta^u_{M})^{-1}\circ h_G\circ F^M(y),(\theta^u_{M})^{-1}\circ h_G\circ F^M(z)\big)\ \ \ \ \big(\ \text{by}\ (\ref{equ3.2})\ \big)\\
&\leq 2\delta\ \ \ \ \big(\ \text{by}\ (\ref{equ3.1})\ \big)\\
&\leq 2\delta^r\\
&< 2[d_u(F^{M+1}y,F^{M+1}z)]^r\\
&\leq 2[d_u(y,z)]^r\prod^M_{j=0}\|D_{\xi_j}F|_{E^u(\xi_j)}\|^r.
\end{aligned}
\end{eqnarray*}
Hence, we have
\begin{eqnarray*}
\begin{aligned}
d_u((\theta^u_0)^{-1}\circ h_G(y),(\theta^u_0)^{-1}\circ h_G(z))&\leq 2 \|D_{\xi_M}F|_{E^u(\xi_M)}\|^r\cdot [d_u(y,z)]^r\\
&\leq 2 \|D_{\xi_M}F|_{E^u(\xi_M)}\|\cdot [d_u(y,z)]^r.
\end{aligned}
\end{eqnarray*}
 Combing the two cases, we conclude that $$d_u\big(h_G(y),h_G(z)\big)=d_u\big(\theta^u_0\circ(\theta^u_0)^{-1}\circ h_G(y),\theta^u_0\circ(\theta^u_0)^{-1}\circ h_G(z)\big)\leq C [d_u(y,z)]^r$$ where
$C=2 \sup\big\{\|D_\xi F|_{E^u(\xi)}\|:\ \xi\in W^u_\beta(F,x), x\in\Lambda\big\}\cdot\sup\big\{\|D_\xi \theta^u_{G,x}\|:\ G\in\mathcal{U}^F_r,x\in\Lambda\big\}$.
 This shows that the map $h_G|_{W^u_\beta(F,x)\cap\Lambda}$ is $(C,r)-$H\"{o}lder continuous. The H\"{o}lder continuity of $h_G|_{W^s_\beta(F,x)\cap\Lambda}$, $(h_G|_{W^u_\beta(F,x)\cap\Lambda})^{-1}$ and $(h_G|_{W^s_\beta(F,x)\cap\Lambda})^{-1} $ can be proven in a similar fashion.
\end{proof}

Recall the holonomy maps of unstable and stable foliations which are Lipschitz or H\"{o}lder  continuous. Let $\mathcal{F}^u$, $\mathcal{F}^s$ be the unstable and stable foliations of hyperbolic dynamical system $(f,\Lambda)$. For $x,y\in\Lambda$ with $x$ close to $y$, let $\mathcal{F}_{loc}^s(f,x)$ and $\mathcal{F}_{loc}^s(f,y)$ be the local stable foliations of $x$ and $y$.  Define the map $h:\ \mathcal{F}_{loc}^s(f,x)\to\mathcal{F}_{loc}^s(f,y)$, sending $z$ to $h(z)$ by sliding along the leaves of $\mathcal{F}^u$. The map $h$ is called the \emph{holonomy map of $\mathcal{F}^u$}. The map $h$ is \emph{Lipschitz continuous} if $$d_y\big(h(z_1),h(z_2)\big)\leq L d_x(z_1,z_2),$$
where $z_1,z_2\in \mathcal{F}_{loc}^s(f,x)$ and $d_x, d_y$ are natural metrics on $\mathcal{F}_{loc}^s(f,x)$, $\mathcal{F}_{loc}^s(f,y)$, path metrics with respect to a fixed Riemannian  structure on $M$. The constant $L$ is the Lipschitz constant, and it is independent of the choice of $\mathcal{F}^s$. The map $h$ is \emph{$\alpha-$H\"{o}lder continuous} if $$d_y\big(h(z_1),h(z_2)\big)\leq H d_x(z_1,z_2)^\alpha,$$ where $H$ is the H\"{o}lder constant. Similarly we can define the holonomy map of $\mathcal{F}^s$. In \cite{hp70}, authors prove the regularity of foliations for $C^2-$diffeomorphism.
Define four quantities:  $$a_f=\|Df^{-1}|_{E^u}\|<1,\ b_f=\|Df|_{E^s}\|<1,$$
$$c_f=\|Df|_{E^u}\|>1,\ d_f=\|Df^{-1}|_{E^s}\|>1.$$

\begin{lemma}[Theorem 6.3, \cite{hp70}]\label{lemma3.1}
Let $f:\ M\to M$ be a $C^2-$diffeomorphism, and $\Lambda \subset M$ be a locally maximal hyperbolic  set. If $a_f b_f c_f<1$, then the stable foliation is $C^1$. If $a_f b_f d_f<1$, then the unstable foliation is $C^1$.
\end{lemma}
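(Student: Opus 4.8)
The statement is a pair of symmetric assertions, and I would first reduce the unstable case to the stable one. The unstable foliation $\mathcal{F}^u$ of $f$ is the stable foliation of $g=f^{-1}$, and under the identifications $E^s_g=E^u$, $E^u_g=E^s$ one computes directly from the definitions that $a_g=\|Dg^{-1}|_{E^u_g}\|=\|Df|_{E^s}\|=b_f$, $b_g=\|Dg|_{E^s_g}\|=\|Df^{-1}|_{E^u}\|=a_f$, and $c_g=\|Dg|_{E^u_g}\|=\|Df^{-1}|_{E^s}\|=d_f$. Hence the stable-foliation hypothesis $a_g b_g c_g<1$ for $g$ reads exactly $a_f b_f d_f<1$. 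Thus it suffices to prove the first assertion, that $a_f b_f c_f<1$ forces $\mathcal{F}^s$ to be $C^1$, and then apply it to $g=f^{-1}$ to obtain the second.

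For the stable case my plan is to realize the transverse $C^1$-structure of $\mathcal{F}^s$ as a $C^1$ invariant section of an auxiliary fibre bundle and to invoke the $C^1$ Section (fibre-contraction) Theorem of Hirsch--Pugh--Shub. After fixing a smooth extension of the splitting $E^s\oplus E^u$ to a neighbourhood of $\Lambda$, I would represent a $d_s$-dimensional distribution near $E^s$ as the graph of a linear map $E^s\to E^u$; the derivative cocycle of $f$ then induces a graph transform on these graphs, covering the base map, whose unique attracting invariant section is the distribution $T\mathcal{F}^s$. To capture transverse regularity I would lift to the bundle of $1$-jets of such graphs, on which $Df$ induces a fibrewise affine bundle map. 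The leaves themselves are automatically $C^1$ (indeed $C^2$) by the stable manifold theorem, so the only issue is transverse smoothness, which is precisely the regularity of this $1$-jet section.

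The heart of the matter is the interplay between fibre contraction and base expansion demanded by the Section Theorem. The graph transform contracts fibres at a rate bounded by $\|Df|_{E^s}\|\cdot\|Df^{-1}|_{E^u}\|=b_f a_f<1$; this bound holds automatically on any hyperbolic set and already produces the continuous invariant section $T\mathcal{F}^s$. To upgrade to $C^1$, the Section Theorem requires this fibre-contraction rate, multiplied by the first power of the transverse expansion of the base dynamics, to be strictly less than one. Since the transverse (i.e.\ $E^u$) expansion is bounded by $\|Df|_{E^u}\|=c_f$, the condition becomes $a_f b_f c_f<1$, which is exactly the hypothesis. The extra factor $c_f$ arising from the single transverse derivative is what turns the automatic $C^0$ bound $a_f b_f<1$ into a genuine $C^1$ requirement. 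The $C^2$ assumption on $f$ is used here to ensure that the $1$-jet bundle map is itself $C^1$, as the Section Theorem needs.

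The main obstacle I anticipate is the bookkeeping needed to extract the \emph{sharp} product $a_f b_f c_f$ rather than a cruder combination of operator norms. Because $a_f,b_f,c_f$ measure one-step behaviour, one must work in an adapted (Lyapunov) Riemannian metric in which the one-step fibre Lipschitz constant of the $1$-jet map equals, up to an arbitrarily small $\varepsilon$, the product $b_f a_f c_f$, so that the hypothesis of the Section Theorem is met with a constant genuinely below one. A secondary technical point is that $\Lambda$ is only a locally maximal hyperbolic set, not all of $M$: the distribution $T\mathcal{F}^s$ and the cocycle live over $\Lambda$ and its local stable leaves, so the entire construction must be carried out on the foliated neighbourhood using the smooth off-$\Lambda$ extension of the splitting, while remembering that $E^s\oplus E^u$ itself is only Hölder. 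Once the adapted metric is fixed, these points are routine, and the $C^1$ regularity of $\mathcal{F}^s$ follows.
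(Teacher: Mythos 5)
The paper offers no proof of this lemma at all: it is imported as a known result, Theorem 6.3 of Hirsch--Pugh \cite{hp70}, so the only in-paper ``proof'' is the citation, and the comparison must be with that original source. Your sketch --- reducing the unstable case to the stable one via $g=f^{-1}$ (the identities $a_g=b_f$, $b_g=a_f$, $c_g=d_f$ are checked correctly), realizing $T\mathcal{F}^s$ as the attracting invariant section of the backward graph transform with fibre contraction rate $a_fb_f$, and invoking the $C^1$ section theorem, whose hypothesis of fibre contraction times base expansion, $a_fb_f\cdot c_f<1$, is exactly the stated bunching condition, with the $C^2$ hypothesis used to make the jet-bundle map $C^1$ and an adapted metric used to get the sharp one-step constants --- is precisely the Hirsch--Pugh--Shub fibre-contraction argument underlying the cited theorem, i.e., essentially the same approach as the paper's source.
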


%\begin{remark}\label{lipcon}
%For a hyperbolic set $\Lambda$ of a $C^2$ diffeomorphism F satisfying (\ref{daxiaomo}) for some small $\varepsilon>0$ with $\lambda e^\varepsilon<1$ ($\lambda$ is defined in Section \ref{ns}), using Lemma \ref{lemma3.1} one can show that the stable and unstable foliations are Lipschitz continuous.
%In fact (\ref{daxiaomo}) implies $\displaystyle{\frac{\|DF|_{E^i}\|}{m(DF|_{E^i})}}\leq e^{N\varepsilon}$ for $i\in\{u,s\}$. Since $\|DF|_{E^s}\|\leq\lambda^N$ and $\|DF^{-1}|_{E^u}\|\leq\lambda^N$, we have $a_Fb_Fc_F<1$ and $a_Fb_Fd_F<1$.
%\end{remark}

\begin{remark}\label{reofle3.1}
If the unstable and stable foliations are $C^1$, by Theorem $5.1$ and Section $6$ in \cite{psw97},  the corresponding holonomy maps are locally uniformly $C^1$. Thus the corresponding holonomy maps are Lipschitz continuous. For more information about the regularity of unstable and stable foliations, we refer to \cite{hp70,hps77,psw97} for detailed description.
\end{remark}

The following two results are well-known in the field of fractal geometry, e.g., see Falconer's book \cite{fal03} for proofs.

\begin{lemma}\label{lemma3.2}
Let $X$ and $Y$ be metric spaces. For any $r\in(0,1)$, $\Phi:\ X\to Y$ is an onto, $(c,r)$-H$\ddot{o}$lder continuous map for some $c>0$. Then
$\dim_HY\leq r^{-1}\dim_HX$, $\underline{\dim}_BY\leq r^{-1}\underline{\dim}_BX$ and $\overline{\dim}_BY\leq r^{-1}\overline{\dim}_BX.$
\end{lemma}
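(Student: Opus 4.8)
The plan is to reduce everything to the elementary fact that a $(c,r)$-H\"{o}lder map cannot increase the diameter of a set faster than a power law, namely $\diam\Phi(A)\le c(\diam A)^r$ for every $A\subset X$. Indeed, for any $a,b\in A$ the H\"{o}lder bound gives $d_Y(\Phi(a),\Phi(b))\le c\,d_X(a,b)^r\le c(\diam A)^r$, and taking the supremum over $a,b\in A$ yields the claim. Since $\Phi$ is onto, the image of any cover of $X$ is again a cover of $Y$; thus this single diameter estimate transports covers of $X$ into controlled covers of $Y$, and all three inequalities follow by tracking how the scale and the exponent transform under $\Phi$.

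For the Hausdorff dimension I would start from an arbitrary $\delta$-cover $\{U_i\}$ of $X$ and pass to $\{\Phi(U_i)\}$, which covers $Y$ and, by the diameter estimate, is a $(c\delta^r)$-cover. For fixed $t\ge 0$ the same estimate gives $\sum_i(\diam\Phi(U_i))^t\le c^t\sum_i(\diam U_i)^{rt}$, so that, taking infima over all $\delta$-covers,
\[
\mathcal{H}^t_{c\delta^r}(Y)\le c^t\,\mathcal{H}^{rt}_\delta(X).
\]
Letting $\delta\to0$ (whence $c\delta^r\to0$ as well) gives $\mathcal{H}^t(Y)\le c^t\,\mathcal{H}^{rt}(X)$. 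Now fix any $s>\dim_HX$; then $\mathcal{H}^s(X)=0$, and choosing $t=s/r$ yields $\mathcal{H}^{s/r}(Y)=0$, so $\dim_HY\le s/r$. Letting $s\downarrow\dim_HX$ produces $\dim_HY\le r^{-1}\dim_HX$.

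For the box dimensions I would instead transport ball-covers. Cover $X$ by $N(X,\delta)$ balls of radius $\delta$; their images cover $Y$, and each has diameter at most $c(2\delta)^r$, hence lies in a ball of radius $\varepsilon:=c(2\delta)^r$. Therefore $N(Y,\varepsilon)\le N(X,\delta)$. Since $\varepsilon=c(2\delta)^r$ is a continuous strictly increasing bijection of $\delta$ onto a range of small positive scales, the limiting behaviour as $\varepsilon\to0$ may be read along the parametrization $\varepsilon(\delta)$, and $-\log\varepsilon=-\log c-r\log2-r\log\delta\sim r(-\log\delta)$ as $\delta\to0$. Dividing $\log N(Y,\varepsilon)\le\log N(X,\delta)$ by $-\log\varepsilon$ and taking $\limsup$ (respectively $\liminf$) as $\delta\to0$ gives $\overline{\dim}_BY\le r^{-1}\overline{\dim}_BX$ and $\underline{\dim}_BY\le r^{-1}\underline{\dim}_BX$.

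The statement is essentially formal once the diameter estimate is available, so I anticipate no serious obstacle; the only care needed is bookkeeping. One must check that the auxiliary scales $c\delta^r$ and $c(2\delta)^r$ still tend to $0$, so that the defining limits for $\mathcal{H}^t$ and for the box dimensions may legitimately be taken, and that in the box-dimension computation the additive constants $-\log c-r\log2$ are asymptotically negligible against $r(-\log\delta)$, so that the factor $r^{-1}$ emerges cleanly from the logarithmic normalization. Surjectivity of $\Phi$ enters exactly once in each argument, guaranteeing that the images of a covering family again cover $Y$.
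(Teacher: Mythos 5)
Your proof is correct and coincides with the standard argument: the paper does not prove Lemma \ref{lemma3.2} itself but refers to Falconer's book \cite{fal03}, where exactly this cover-transport proof appears --- the Hausdorff-measure estimate $\mathcal{H}^{t}(Y)\le c^{t}\,\mathcal{H}^{rt}(X)$ followed by the counting bound $N\big(Y,c(2\delta)^{r}\big)\le N(X,\delta)$ for the box dimensions. The only cosmetic point is that the paper's definition of a $\delta$-cover uses open sets while the images $\Phi(U_i)$ need not be open; enlarging each image to a slightly bigger open neighbourhood handles this without affecting any of the estimates.
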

%\begin{proof}
%For all $d\geq0,\ \varepsilon>0$, we have $\mathcal{H}^d(Y)\leq c^d\mathcal{H}^{dr}(X)$ and %$N(Y,c\varepsilon^r)\leq N(X,\varepsilon)$. By the definitions of dimension, it follows %immediately that
%$\dim_HY\leq r^{-1}\dim_HX$, $\underline{\dim}_BY\leq r^{-1}\underline{\dim}_BX$ and %$\overline{\dim}_BY\leq r^{-1}\overline{\dim}_BX.$
%\end{proof}

\begin{corollary}\label{corofle3.2}
Let $X$ and $Y$ be metric spaces, and $\Phi:\ X\to Y$ is an onto, Lipschitz continuous map. Then
$$\dim_HY\leq \dim_HX,\ \underline{\dim}_BY\leq \underline{\dim}_BX\ \text{and}\ \overline{\dim}_BY\leq \overline{\dim}_BX.$$
\end{corollary}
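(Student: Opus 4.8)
The plan is to deduce the corollary from Lemma \ref{lemma3.2} by a limiting argument, or alternatively to argue directly from the definitions; I will describe both since each is short. For the first route, observe that a Lipschitz map is precisely a $(c,1)$-H\"older continuous map, and on a set of finite diameter $D$ one has $d_X(u,v)\le D^{1-r}d_X(u,v)^r$ for every $r\in(0,1)$, so that a Lipschitz $\Phi$ is also $(cD^{1-r},r)$-H\"older continuous for each such $r$. Feeding this into Lemma \ref{lemma3.2} yields $\dim_H Y\le r^{-1}\dim_H X$ together with the analogous inequalities for $\underline{\dim}_B$ and $\overline{\dim}_B$, valid for every $r\in(0,1)$; letting $r\to 1^-$ then gives the three desired inequalities. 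In the setting where this corollary is applied the metric spaces are compact (being intersections of a compact hyperbolic set with local stable/unstable manifolds), so the finite-diameter hypothesis is automatic.

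For a fully self-contained route I would instead argue straight from the definitions. Fix $L>0$ with $d_Y(\Phi(u),\Phi(v))\le L\,d_X(u,v)$. For the Hausdorff dimension, let $\{U_i\}$ be any $\delta$-cover of $X$; since $\Phi$ is onto, $\{\Phi(U_i)\}$ covers $Y$, and $\operatorname{diam}\Phi(U_i)\le L\operatorname{diam}U_i\le L\delta$, so $\{\Phi(U_i)\}$ is an $(L\delta)$-cover of $Y$ with $\sum_i(\operatorname{diam}\Phi(U_i))^s\le L^s\sum_i(\operatorname{diam}U_i)^s$. Taking infima over covers and then $\delta\to0$ gives $\mathcal{H}^s(Y)\le L^s\mathcal{H}^s(X)$; hence $\mathcal{H}^s(X)=0$ forces $\mathcal{H}^s(Y)=0$, and therefore $\dim_H Y\le\dim_H X$ by Definition \ref{dim}.

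For the box dimensions I would use covers by balls. If $X$ is covered by $N(X,\delta)$ balls of radius $\delta$, their $\Phi$-images have diameter at most $2L\delta$ and, since $\Phi$ is onto, cover $Y$; enclosing each image in a ball of radius $2L\delta$ gives $N(Y,2L\delta)\le N(X,\delta)$. Consequently
\[
\frac{\log N(Y,2L\delta)}{-\log(2L\delta)}\le\frac{\log N(X,\delta)}{-\log\delta-\log(2L)}.
\]
As $\delta\to0$ the additive constant $\log(2L)$ is negligible against $-\log\delta\to+\infty$, so passing to the $\liminf$ and to the $\limsup$ yields $\underline{\dim}_B Y\le\underline{\dim}_B X$ and $\overline{\dim}_B Y\le\overline{\dim}_B X$. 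The only point requiring any care is this bookkeeping with the multiplicative constant $L$ and the rescaling $\delta\mapsto L\delta$ (equivalently, the passage $r\to1^-$ in the first route); everything else is routine, so I do not expect a genuine obstacle here.
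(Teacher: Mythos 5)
Your proposal is correct. Note that the paper does not actually prove this corollary at all: it simply declares Lemma \ref{lemma3.2} and Corollary \ref{corofle3.2} to be well known and refers to Falconer's book \cite{fal03}. Your second, self-contained argument is precisely the standard proof one finds there: images of a $\delta$-cover under an onto $L$-Lipschitz map form an $(L\delta)$-cover whose $s$-sums grow by at most a factor $L^s$, giving $\mathcal{H}^s(Y)\le L^s\mathcal{H}^s(X)$ and hence $\dim_H Y\le \dim_H X$; and the counting bound $N(Y,2L\delta)\le N(X,\delta)$ handles both box dimensions, since the substitution $\delta\mapsto 2L\delta$ is a monotone bijection of $(0,\infty)$ (so the $\liminf$ and $\limsup$ are taken over the same family of scales) and the additive constant $\log(2L)$ vanishes against $-\log\delta\to\infty$. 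Your first route --- viewing an $L$-Lipschitz map as $(LD^{1-r},r)$-H\"older for every $r\in(0,1)$ and letting $r\to 1^-$ in Lemma \ref{lemma3.2} --- is also valid, but only under the extra hypothesis $\operatorname{diam}X<\infty$, which the corollary as stated does not assume; as you note, this is harmless in the paper's applications, where the spaces are compact, and for the box-dimension inequalities total boundedness is implicit anyway. One cosmetic remark on the direct route: the paper's Definition \ref{dim} takes $\delta$-covers by open sets, and $\Phi(U_i)$ need not be open; this is repaired by enlarging each $\Phi(U_i)$ to an open neighborhood whose diameter exceeds $\operatorname{diam}\Phi(U_i)$ by arbitrarily little, which changes nothing in the limit. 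This is standard and does not constitute a gap.
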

%\begin{proof}
%For all $d\geq0,\ \varepsilon>0$, we have $$\mathcal{H}^d(Y)\leq %\text{Lip}(\Phi)^d\cdot\mathcal{H}^{d}(X)\ \text{and}\ %N\big(Y,\text{Lip}(\Phi)\varepsilon\big)\leq N(X,\varepsilon)$$ where $\text{Lip}(\Phi)$ is the %Lipschitz constant of $\Phi$. By the definitions of dimension, it follows immediately that
%$\dim_HY\leq \dim_HX$, $\underline{\dim}_BY\leq \underline{\dim}_BX$ and $\overline{\dim}_BY\leq %\overline{\dim}_BX.$
%\end{proof}

Using Theorem \ref{th3.1} and the transitive property, the following theorem holds.

\begin{theorem}\label{th3.2}
Let $f: M \to M$ be a $C^1$ diffeomorphism, and $\Lambda\subseteq M$  a locally maximal average conformal hyperbolic set.
Then $\dim_H\big(W^u_\beta(f,x)\cap\Lambda\big)$, $\underline{\dim}_B\big(W^u_\beta(f,x)\cap\Lambda\big)$ and $\overline{\dim}_B\big(W^u_\beta(f,x)\cap\Lambda\big)$ are continuous functions of $f\in \mbox{Diff}^1(M)$, independent of $\beta$ and $x$.  Moreover, $$\dim_H\big(W^u_\beta(f,x)\cap\Lambda\big)=\underline{\dim}_B\big(W^u_\beta(f,x)\cap\Lambda\big)=\overline{\dim}_B\big(W^u_\beta(f,x)\cap\Lambda\big).$$ The same statements for $W^s_\beta(f,x)\cap\Lambda$ hold.
\end{theorem}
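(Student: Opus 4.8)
The plan is to fix one of the three dimensions, write $\dim_\star$ for it, and study the single function $u_\star(x):=\dim_\star\big(W^u_\beta(f,x)\cap\Lambda\big)$ on $\Lambda$; the statements for $W^s_\beta(f,x)\cap\Lambda$ will then follow by replacing $f$ with $f^{-1}$, which interchanges the stable and unstable bundles and leaves the average conformal hypothesis intact. I would first prove that $u_\star$ depends neither on $\beta$ nor on $x$, then deduce the equality of the three dimensions, and finally invoke Theorem \ref{th3.1} to get continuity in $f$.

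For independence of $\beta$ and of $x$ I would argue as follows. Since $f$ restricts to a $C^1$ diffeomorphism of each unstable leaf it is locally bi-Lipschitz there, and bi-Lipschitz maps preserve each of $\dim_H,\underline{\dim}_B,\overline{\dim}_B$; combining this with the uniform expansion along $E^u$ shows that the sets $W^u_\beta(f,x)\cap\Lambda$ for different small $\beta$ are bi-Lipschitz images of one another under iterates of $f$, so $u_\star$ is independent of $\beta$, and the same observation yields the $f$-invariance $u_\star(fx)=u_\star(x)$. Next, for $x,y$ in a common local product neighborhood with $y\in W^s_\beta(f,x)$, I would show that the stable holonomy $W^u_\beta(f,x)\cap\Lambda\to W^u_\beta(f,y)\cap\Lambda$ is a bijection which, by an estimate entirely parallel to Claim \ref{holdercontinuous} and using the average conformal property of Lemma \ref{ave-conver}, is $(C,r)$-H\"older together with its inverse for \emph{every} $r\in(0,1)$. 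Applying Lemma \ref{lemma3.2} and letting $r\to1$ then gives $u_\star(x)=u_\star(y)$; as points on a common unstable leaf share the same local slice, $u_\star$ is constant on each product neighborhood, hence locally constant and continuous on $\Lambda$. Transitivity supplies a point $p$ with dense orbit, so $f$-invariance makes $u_\star$ constant along $\{f^n p\}$ and continuity makes it constant on $\Lambda=\overline{\{f^n p\}}$; thus $u_\star$ is independent of $x$.

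For continuity in $f$ I would fix $\varepsilon>0$, choose $r\in(0,1)$ near $1$, and observe that for $g\in\mathcal{U}^f_r$ Theorem \ref{th3.1} makes $h_g$ and $h_g^{-1}$ $(C,r)$-H\"older on the unstable slices, whence Lemma \ref{lemma3.2} gives $r\,u_\star(f)\le u_\star(g)\le r^{-1}u_\star(f)$, each side now being a single number by the previous step. Letting $g\to f$ forces $r\to1$ and hence $u_\star(g)\to u_\star(f)$. To close the equality $\dim_H=\underline{\dim}_B=\overline{\dim}_B$ I only need the reverse of the trivial chain $\dim_H\le\underline{\dim}_B\le\overline{\dim}_B$: the uniform bounded distortion furnished by Lemma \ref{ave-conver} makes the natural cylinder covers of $W^u_\beta(f,x)\cap\Lambda$ coming from a Markov partition consist of asymptotically round sets of comparable diameter at each scale, and this Moran-type structure (irreducible by transitivity) makes the box-counting quantities comparable to the Hausdorff measure, forcing $\overline{\dim}_B\le\dim_H$.

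The hard part will be the H\"older regularity of the stable holonomy with exponent arbitrarily close to $1$: this is precisely where average conformality is indispensable, since the balance of expansion and contraction rates is what allows the distortion exponent in the Claim \ref{holdercontinuous} estimate to be pushed to $1$. The second delicate point is the inequality $\overline{\dim}_B\le\dim_H$, which again rests squarely on the bounded distortion of Lemma \ref{ave-conver}; in the genuinely non-conformal case it can fail, so the hypothesis cannot be removed.
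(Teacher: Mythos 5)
Your plan for the continuity-in-$f$ step coincides with the paper's (Theorem \ref{th3.1} plus Lemma \ref{lemma3.2}, letting $r\to 1$), and your constancy-in-$(x,\beta)$ step has the same skeleton (holonomy + iteration + transitivity). But the step where you close the equality $\dim_H=\underline{\dim}_B=\overline{\dim}_B$ contains a genuine gap. You justify $\overline{\dim}_B\le\dim_H$ by ``uniform bounded distortion furnished by Lemma \ref{ave-conver}'' making Markov cylinders ``of comparable diameter at each scale.'' Lemma \ref{ave-conver} gives no such thing: it only says that $\frac1n\big(\log\|D_xf^n|_{E^u(x)}\|-\log m(D_xf^n|_{E^u(x)})\big)\to 0$ uniformly, i.e.\ the eccentricity $\|D_xf^n|_{E^u(x)}\|/m(D_xf^n|_{E^u(x)})$ is $e^{o(n)}$ --- subexponential but typically unbounded (this is exactly what separates average conformal from conformal, cf.\ \cite{zcb09}). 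So cylinders in an unstable slice are only round up to factors $\delta^{-o(1)}$, and a Moran-type argument requiring comparable inner and outer diameters at each scale does not apply; moreover $f$ is only $C^1$, so there is no bounded distortion along orbits either. The paper's substitute is the sub-/super-additive pressure squeeze: Lemmas \ref{lemma3.3} and \ref{lemma3.4}, applied to the iterates $F=f^{2^k}$, give $\underline t_u^k\le \dim_H\big(W^u_\beta(f,x)\cap\Lambda\big)\le \overline{\dim}_B\big(W^u_\beta(f,x)\cap\Lambda\big)\le\overline t_u^k$, where $\underline t_u^k$ and $\overline t_u^k$ are the roots of $P_\Lambda(F,-t\log\|D_xF|_{E^u}\|)=0$ and $P_\Lambda(F,-t\log m(D_xF|_{E^u}))=0$, and then the variational principles (Theorems \ref{subaddvariprin} and \ref{supaddvariprin}, as in \cite{bch}) show both sequences of roots converge to a common limit $t_u$. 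Some version of this machinery, which digests the $e^{o(n)}$ distortion, cannot be omitted, and your plan offers no replacement for it.

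A second, lesser divergence: you propose to prove directly that the stable holonomy of the $C^1$ map $f$ is $(C,r)$-H\"older for every $r<1$ ``by an estimate entirely parallel to Claim \ref{holdercontinuous}.'' The statement is true (it is the paper's Theorem \ref{th3.3}), but the paper never proves it by a direct estimate: it perturbs $F=f^N$ to a nearby $C^2$ diffeomorphism $G$, uses Hirsch--Pugh (Lemma \ref{lemma3.1}, which genuinely requires $C^2$) together with the average conformal pinching to make the foliations of $G$ be $C^1$, hence with Lipschitz holonomies, and then writes $\pi^s=h_G^{-1}\circ\pi^s_G\circ h_G$ with $h_G$ H\"older of exponent close to $1$; the same $C^2$-approximation underlies Lemma \ref{HausofF}, which is how the paper gets independence of $x$ and $\beta$. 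Your direct route is plausible (at a fixed scale one only needs uniform continuity of $Df$ along the unstable lamination, and average conformality absorbs the per-step errors, as in Claim \ref{holdercontinuous}), but it is real work that your proposal asserts rather than performs, and it is precisely the step the authors chose to route around. Finally, note a small logical wrinkle: iterating $f$ relates $W^u_\beta(f,x)\cap\Lambda$ to slices at the \emph{different} points $f^{\pm n}x$, so $\beta$-independence at a fixed $x$ does not follow from bi-Lipschitz iteration alone; it must be run jointly with the holonomy and transitivity arguments, as is done in Lemma \ref{HausofF}.
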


To prove Theorem \ref{th3.2}, we need two lemmas as follow. Take a small number $\varepsilon>0$ with $\lambda e^{2\varepsilon}<1$, where $\lambda$ is the skewness of the hyperbolicity defined in Section \ref{ns}.  Since $f$ is average conformal on $\Lambda$, by Lemma \ref{ave-conver}  there exists a positive integer $N(\varepsilon)$
such that for any $n\geq N(\varepsilon)$ and $x\in\Lambda$
$$1\leq\frac{\|D_xf^n|_{E^u(x)}\|}{m(D_xf^n|_{E^u(x)})}< e^{n\varepsilon} ~~\text{and}~~ 1\leq\frac{\|D_xf^n|_{E^s(x)}\|}{m(D_xf^n|_{E^s(x)})}< e^{n\varepsilon}.$$
Fixing any $n\geq N(\varepsilon)$, let $F:=f^{n}.$ Then $F$ satisfies (\ref{daxiaomo}). Since $\Lambda$ is a locally maximal hyperbolic  set for $f$, $\Lambda$ is also a locally maximal hyperbolic  set for $F$. Then we have Lemma \ref{lipfoliation} and \ref{HausofF}.

\begin{lemma}\label{lipfoliation}
Let $f: M \to M$ be a $C^2$ diffeomorphism, and $\Lambda\subseteq M$  a locally maximal average conformal hyperbolic set. For small $\varepsilon>0$ with $\lambda e^{2\varepsilon}<1$, there exists a positive integer $N(\varepsilon)$ such that any $n\geq N(\varepsilon)$, the holonomy maps of the stable and unstable foliations for $F:=f^{n}$ are Lipschitz continuous respectively.
\end{lemma}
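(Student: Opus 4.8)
The plan is to reduce the statement to verifying the two bunching inequalities of Lemma \ref{lemma3.1} for the iterate $F=f^{n}$, and then to pass from $C^1$ regularity of the foliations to Lipschitz holonomies via Remark \ref{reofle3.1}. Since $f$ is $C^2$, so is $F=f^{n}$, and $\Lambda$ is a locally maximal hyperbolic set for $F$ carrying the same splitting $E^u\oplus E^s$; thus Lemma \ref{lemma3.1} applies to $F$ once I check $a_Fb_Fc_F<1$ (which yields a $C^1$ stable foliation) and $a_Fb_Fd_F<1$ (which yields a $C^1$ unstable foliation).

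The two ingredients are the average conformal distortion bound and the hyperbolic contraction. From Lemma \ref{ave-conver}, equivalently the inequality (\ref{daxiaomo}) applied to $F$, for every $n\ge N(\varepsilon)$ and $x\in\Lambda$ the distortions on each subbundle are subexponential, and combining them with the identities $a_F=\|DF^{-1}|_{E^u}\|=1/m(DF|_{E^u})$ and $d_F=\|DF^{-1}|_{E^s}\|=1/m(DF|_{E^s})$ gives
$$ a_Fc_F=\frac{\|D_xF|_{E^u(x)}\|}{m(D_xF|_{E^u(x)})}<e^{n\varepsilon},\qquad b_Fd_F=\frac{\|D_xF|_{E^s(x)}\|}{m(D_xF|_{E^s(x)})}<e^{n\varepsilon}. $$
On the other hand, hyperbolicity of $f$ supplies the exponentially small factors $b_F=\|D_xf^{n}|_{E^s}\|\le C\lambda^{n}$ and $a_F=\|D_xf^{-n}|_{E^u}\|\le C\lambda^{n}$.

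Combining these, and enlarging the threshold so that $n$ is large enough to force $C\le e^{n\varepsilon}$, I obtain
$$ a_Fb_Fc_F=(a_Fc_F)\,b_F<e^{n\varepsilon}\cdot C\lambda^{n}\le \lambda^{n}e^{2n\varepsilon}=(\lambda e^{2\varepsilon})^{n}, $$
$$ a_Fb_Fd_F=a_F\,(b_Fd_F)<C\lambda^{n}\cdot e^{n\varepsilon}\le \lambda^{n}e^{2n\varepsilon}=(\lambda e^{2\varepsilon})^{n}. $$
Since $\varepsilon$ was chosen with $\lambda e^{2\varepsilon}<1$, both right-hand sides are $<1$ for all sufficiently large $n$, which fixes the threshold $N(\varepsilon)$ (enlarged, if necessary, so that $C\le e^{n\varepsilon}$). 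Hence $F=f^{n}$ satisfies both hypotheses of Lemma \ref{lemma3.1}, so its stable and unstable foliations are $C^1$, and by Remark \ref{reofle3.1} the associated holonomy maps are locally uniformly $C^1$, in particular Lipschitz continuous. Note that the splitting of $e^{2\varepsilon}$ is exactly accounted for: one factor $e^{\varepsilon}$ controls the average conformal distortion and the second absorbs the hyperbolicity constant $C$.

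The only delicate point is the bookkeeping in the displayed distortion estimates: one must read the Hirsch--Pugh quantities so that the products $a_Fc_F$ and $b_Fd_F$ are precisely the pointwise distortions $\|DF|_{E^{u}}\|/m(DF|_{E^{u}})$ and $\|DF|_{E^{s}}\|/m(DF|_{E^{s}})$ controlled by Lemma \ref{ave-conver}, rather than independent suprema of expansion and contraction taken at unrelated base points. It is exactly the average conformal hypothesis that makes this distortion subexponential uniformly on $\Lambda$; for a genuinely non-conformal set the spread of the unstable (resp. stable) exponents would make the distortion grow exponentially, and then no uniform iterate $F=f^{n}$ would meet the bunching condition, so the argument is sharp in its use of the hypothesis.
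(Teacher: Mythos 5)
Your proof is correct and follows essentially the same route as the paper: you verify the two bunching conditions $a_Fb_Fc_F<1$ and $a_Fb_Fd_F<1$ of Lemma \ref{lemma3.1} by factoring each product into the average conformal distortion $\|DF|_{E^i}\|/m(DF|_{E^i})<e^{n\varepsilon}$ from \eqref{daxiaomo} times a hyperbolic contraction factor, and then conclude via Lemma \ref{lemma3.1} and Remark \ref{reofle3.1}, exactly as the paper does. The only difference is bookkeeping: the paper silently takes the hyperbolicity constant $C=1$ (i.e.\ an adapted metric), so its bound is $e^{n\varepsilon}\lambda^n$, whereas you absorb $C$ into a second factor $e^{n\varepsilon}$, which is a legitimate and slightly more careful accounting that also explains the factor $2$ in the hypothesis $\lambda e^{2\varepsilon}<1$.
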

\begin{proof}
Since $F$ satisfies (\ref{daxiaomo}), $\displaystyle{\frac{\|DF|_{E^i}\|}{m(DF|_{E^i})}}\leq e^{n\varepsilon}$ for $i\in\{u,s\}$. Since $\|DF|_{E^s}\|\leq\lambda^n$ and $\|DF^{-1}|_{E^u}\|\leq\lambda^n$, we conclude $$a_F b_F c_F=\frac{\|DF|_{E^u}\|\cdot\|DF|_{E^s}\|}{m(DF|_{E^u})}\leq e^{n\varepsilon}\lambda^n <1,$$ $$a_F b_F d_F=\frac{\|DF|_{E^s}\|}{m(DF|_{E^u})\cdot m(DF|_{E^s})}\leq  e^{n\varepsilon}\lambda^n<1.$$  The desired result follows from  Lemma \ref{lemma3.1} and Remark \ref{reofle3.1} immediately.
\end{proof}

\begin{lemma}\label{HausofF}
Let $f: M \to M$ be a $C^1$ diffeomorphism, and $\Lambda\subseteq M$  a locally maximal average conformal hyperbolic set. For small $\varepsilon>0$ with $\lambda e^{2\varepsilon}<1$, there exists a positive integer $N(\varepsilon)$ such that any $n\geq N(\varepsilon)$, $\dim_H\big(W^u_\beta(F,x)\cap\Lambda\big)$, $\underline{\dim}_B\big(W^u_\beta(F,x)\cap\Lambda\big)$ and $\overline{\dim}_B\big(W^u_\beta(F,x)\cap\Lambda\big)$ are independent of $\beta$ and $x$. The same statements for $W^s_\beta(F,x)\cap\Lambda$ hold. (Here $F:=f^n$)
\end{lemma}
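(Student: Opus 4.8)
\emph{Plan.} I would prove the statement for the unstable slices $W^u_\beta(F,x)\cap\Lambda$, the stable case being symmetric upon replacing $F$ by $F^{-1}$. Write $d^u(x,\beta)$ for any one of the three dimensions $\dim_H,\ \underline{\dim}_B,\ \overline{\dim}_B$ of $W^u_\beta(F,x)\cap\Lambda$, since the argument is identical in all three cases. The backbone is the remark that, as $f$ is $C^1$ and $\Lambda$ is compact, $DF|_{E^u}$ and its inverse are bounded above and below on $\Lambda$; hence on any compact piece of an unstable leaf both $F$ and $F^{-1}$ restrict to bi-Lipschitz maps, and by Corollary \ref{corofle3.2} applied to such a map and to its inverse all three dimensions are preserved by $F^{\pm 1}$ along unstable leaves. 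Two reductions then remain: independence of the basepoint $x$ and independence of the radius $\beta$.

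\emph{Basepoint.} First I would record that $d^u$ is constant along orbits: $F$ maps $W^u_\beta(F,x)\cap\Lambda$ bi-Lipschitzly onto $F\big(W^u_\beta(F,x)\big)\cap\Lambda$, a compact piece of the unstable leaf through $F(x)$ which, since $F$ expands the unstable direction, contains $W^u_\beta(F,F(x))\cap\Lambda$; by monotonicity $d^u(x,\beta)\ge d^u(F(x),\beta)$, and the same reasoning for $F^{-1}$ gives the reverse inequality, so $d^u(x,\beta)=d^u(F(x),\beta)$. Next I would show local constancy in $x$. For $x'$ in a small product neighbourhood of $x$ the stable holonomy $\pi^s$ carries $W^u_\beta(F,x)\cap\Lambda$ onto $W^u_\beta(F,x')\cap\Lambda$, local maximality keeping the image inside $\Lambda$. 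The crucial point is that in the present $C^1$ setting one cannot invoke the Lipschitz regularity of Lemma \ref{lipfoliation} (which requires $C^2$); instead I would run the distortion estimate of Claim \ref{holdercontinuous} with $\pi^s$ in place of $h_G$, using Lemma \ref{ave-conver} to make the ratio of unstable expansion rates subexponential, and thereby obtain that $\pi^s$ and $(\pi^s)^{-1}$ are $(C,r)$-H\"older for every $r\in(0,1)$. Feeding this into Lemma \ref{lemma3.2} in both directions yields $d^u(x,\beta)\le r^{-1}d^u(x',\beta)$ and $d^u(x',\beta)\le r^{-1}d^u(x,\beta)$ for all $r<1$; letting $r\to 1$ forces $d^u(x,\beta)=d^u(x',\beta)$. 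Thus $d^u(\cdot,\beta)$ is locally constant, and combined with orbit-constancy and the transitivity of $f$ on $\Lambda$ — the level set carrying the value along a dense orbit is open, closed and dense, hence all of $\Lambda$ — it is globally constant in $x$.

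\emph{Radius.} Fix $\beta_1<\beta_2$. Monotonicity gives $d^u(x,\beta_1)\le d^u(x,\beta_2)$. For the reverse, uniform contraction of $F^{-1}$ on unstable leaves ensures that for all large $k$ one has $F^{-k}\big(W^u_{\beta_2}(F,x)\cap\Lambda\big)\subseteq W^u_{\beta_1}(F,F^{-k}x)\cap\Lambda$; since $F^{-k}$ is bi-Lipschitz along the leaf it preserves the dimension of $W^u_{\beta_2}(F,x)\cap\Lambda$, so by monotonicity $d^u(x,\beta_2)\le d^u(F^{-k}x,\beta_1)$, which equals $d^u(x,\beta_1)$ by the basepoint independence already established. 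Hence $d^u(x,\beta_1)=d^u(x,\beta_2)$, finishing the unstable case; applying the entire argument to $F^{-1}$ settles the stable slices.

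\emph{Main obstacle.} The delicate step is the local basepoint comparison in the $C^1$ category: lacking $C^2$ we have no Lipschitz holonomies, so equality of the dimensions under $\pi^s$ must be wrung out of H\"older estimates whose exponent only approaches $1$. This is precisely where average conformality is indispensable — through Lemma \ref{ave-conver} it renders the distortion of $\pi^s$ subexponential, which is exactly what lets the H\"older exponent be taken arbitrarily close to $1$ so that the two-sided use of Lemma \ref{lemma3.2} collapses to an equality.
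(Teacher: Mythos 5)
Your central idea --- proving directly that the stable holonomy $\pi^s$ of the $C^1$ map $F$ is $(C,r)$-H\"older for every $r\in(0,1)$, by rerunning the stopping-time distortion estimate of Claim \ref{holdercontinuous} with $\pi^s$ in place of $h_G$ --- is a genuinely different route from the paper's, and it is sound: the holonomy intertwines the dynamics along the orbit exactly as the conjugacy does, the compared points stay at scale $\delta$ up to the stopping time (so uniform continuity of $DF$ and of the leaf tangents gives the bounded-distortion factor per step, after restating condition (1) for nearby points on \emph{different} unstable leaves), and Lemma \ref{ave-conver} converts $m(DF|_{E^u})$ into $\|DF|_{E^u}\|$ at subexponential cost, which is what lets the exponent approach $1$. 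The paper never touches the holonomies of $F$ directly: it picks a $C^2$ diffeomorphism $G$ that is $C^1$-close to $F$, obtains \emph{Lipschitz} holonomies for $G$ from Hirsch--Pugh (Lemma \ref{lemma3.1}, Remark \ref{reofle3.1}) via the bunching inequalities $a_Gb_Gc_G<1$, $a_Gb_Gd_G<1$ supplied by average conformality, proves the statement for $G$ at a transitive point, and transfers back to $F$ through the H\"older conjugacy $h_G$ of Claim \ref{holdercontinuous} with $r\to 1$. Your route is more self-contained --- in effect you prove Theorem \ref{th3.3} first, which the paper itself only obtains through the same $C^2$ detour --- at the price of redoing the distortion computation rather than quoting classical $C^2$ regularity.

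There is, however, a genuine gap in how you assemble the reductions. Your orbit-constancy step fails as stated: from $W^u_\beta(F,F(x))\cap\Lambda\subseteq F\big(W^u_\beta(F,x)\cap\Lambda\big)$ you correctly get $d^u(F(x),\beta)\le d^u(x,\beta)$, but ``the same reasoning for $F^{-1}$'' does \emph{not} give the reverse inequality: since $F^{-1}$ contracts unstable leaves, it gives $F^{-1}\big(W^u_\beta(F,F(x))\big)\subseteq W^u_\beta(F,x)$, hence once more $d^u(F(x),\beta)\le d^u(x,\beta)$. Pushing forward by $F$ enlarges the radius ($F(W^u_\beta(F,x))\subseteq W^u_{L\beta}(F,F(x))$ with $L$ the leafwise Lipschitz constant), so along orbits you only obtain monotonicity, not constancy, unless you already know radius independence. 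This makes your two-step scheme circular: the reverse orbit inequality needs radius independence, while your radius argument explicitly invokes ``the basepoint independence already established.'' The repair is exactly the paper's interleaved recurrence argument, which your ingredients do support: at a recurrent point $x_0$ of a dense orbit, choose $M$ with $F^Mx_0$ close to $x_0$; the H\"older holonomy (exponent $\to 1$) gives $d^u(x_0,\beta)\le d^u(F^Mx_0,\beta_0)$ with $\beta_0$ close to $\beta$, while $W^u_{\beta_0}(F,F^Mx_0)\cap\Lambda=F^M\big(W^u_{\beta_0'}(F,x_0)\cap\Lambda\big)$ with $\beta_0'$ arbitrarily small and $F^M$ bi-Lipschitz on the leaf, so $d^u(F^Mx_0,\beta_0)=d^u(x_0,\beta_0')$; together with monotonicity in $\beta$ this gives radius \emph{and} basepoint-along-the-orbit independence at $x_0$ simultaneously, after which arbitrary $x$ is handled by a holonomy squeeze against the dense orbit using a later return time. (One shared caveat: like the paper's own proof, your argument uses transitivity, which is not among the stated hypotheses of the lemma but is assumed in Theorem \ref{thm-average}.)
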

\begin{proof}%[Proof of Proposition \ref{HausofF}]
For any $r\in(0,1)$, pick a $C^2$ diffeomorphism  $G$ that is $C^1$-close to $F$ such that for all $x\in\Lambda_G$\ (where $\Lambda_G$ is a locally maximal hyperbolic set of $G$),
$$1\leq\frac{\|D_xG|_{E_G^i(x)}\|}{m(D_xG|_{E_G^i(x)})}\leq e^{(n+1)\varepsilon},\ i\in \{u,s\}.$$
%Denote
%$$a_G(x)=\|D_xG^{-1}|_{E_G^u(x)}\|<1,\ b_G(x)=\|D_xG|_{E_G^s(x)}\|<1,$$ $$c_G(x)=\|D_xG|_{E_G^u(x)}\|>1,\ d_G(x)=\|D_xG^{-1}|_{E_G^s(x)}\|>1.$$
%For any $x\in\Lambda_G$, we have
%$$a_G(x)b_G(x)c_G(x)\leq e^{2^{k+1}\varepsilon}\lambda^{2^{k-1}}<1,\ a_G(x)b_G(x)d_G(x)\leq e^{2^{k+1}\varepsilon}\lambda^{2^{k-1}}<1.$$
%By Lemma \ref{lemma3.1}, taking $\alpha=\gamma=1$,  the family of stable foliations $\{W^s(G,x):\ x\in\Lambda_G\}$ and unstable foliations $\{W^u(G,x):\ x\in\Lambda_G\}$ are Lipschitz. And they can be extend to an invarian Lipschitz foliations
%$\mathcal{F}^s$ and $\mathcal{F}^u$, defined in a neighborhood of $\Lambda_G.$
As in the proof of Lemma \ref{lipfoliation}, we can get
$$a_G b_G c_G=\frac{\|DG|_{E^u_G}\|\cdot\|DG|_{E^s_G}\|}{m(DG|_{E^u_G})}\leq e^{2n\varepsilon}\lambda^n <1,$$ $$a_G b_G d_G=\frac{\|DG|_{E^s_G}\|}{m(DG|_{E^u_G})\cdot m(DG|_{E^s_G})}\leq  e^{2n\varepsilon}\lambda^n<1.$$
Therefore the holonomy maps of stable foliation $\mathcal{F}^s$ and unstable foliation $\mathcal{F}^u$ for $G$ are Lipschitz.

Let $x_0\in\Lambda_G$ be a transitive point. We claim that $\dim_H\big(W^u_\beta(G,G^jx_0)\cap\Lambda_G\big)$ is independent of $j\geq 0$ and small $\beta>0$.  In fact, since $G^j$ is a $C^2$ diffeomorphism, there exists some small $\beta'>0$ such that $$W^u_\beta(G,G^jx_0)\cap\Lambda_G=G^j\big(W^u_{\beta'}(G,x_0)\cap\Lambda_G\big).$$
Take $M\geq1$ such that $G^M(x_0)$ is sufficiently close
to $x_0$.
Since $G^M$ is Lipschitz and the holonomy map of $\mathcal{F}^s$ is Lipschitz, by Corollary \ref{corofle3.2},
\begin{eqnarray*}
\begin{aligned}
\dim_H\big(W^u_\beta(G,x_0)\cap\Lambda_G\big)&\leq \dim_H\big(W^u_{\beta_0}(G,G^Mx_0)\cap\Lambda_G\big)\\
&= \dim_H\big(G^M(W^u_{\beta_0'}(G,x_0)\cap\Lambda_G)\big)\\
&\leq \dim_H\big(W^u_{\beta_0'}(G,x_0)\cap\Lambda_G\big)
\end{aligned}
\end{eqnarray*}
for some $\beta_0>0,\ \beta_0'>0$. Moreover, by taking $M$ arbitrarily large, we can suppose that $\beta_0$ is close to $\beta$ and $\beta_0'$ is arbitrarily small. Therefore $\dim_H\big(W^u_\beta(G,x_0)\cap\Lambda_G\big)$ is independent of small $\beta>0$.
Since $G^j$ is bi-Lipschitz continuous,
$$\dim_H\big(W^u_{\beta'}(G,x_0)\cap\Lambda_G\big)=\dim_H\big(W^u_\beta(G,G^jx_0)\cap\Lambda_G\big).$$
The claim now immediately follows.

Take any $x\in\Lambda_G$ and choose $j\geq0$ such that $G^jx_0$ is close to $x$. Since the holonomy map of $\mathcal{F}^s$ is Lipschitz,
\begin{eqnarray*}
\begin{aligned}
\dim_H\big(W^u_{\beta_1}(G,G^jx_0)\cap\Lambda_G\big)&\leq \dim_H\big(W^u_\beta(G,x)\cap\Lambda_G\big)\\
&\leq \dim_H\big(W^u_{\beta_2}(G,G^jx_0)\cap\Lambda_G\big)
\end{aligned}
\end{eqnarray*}
for some $\beta_1>0,\ \beta_2>0$ close to $\beta$. By the claim above, we have that
$$\dim_H\big(W^u_\beta(G,x_0)\cap\Lambda_G\big)=\dim_H\big(W^u_\beta(G,x)\cap\Lambda_G\big).$$ Hence $\dim_H\big(W^u_\beta(G,x)\cap\Lambda_G\big)$ is independent of $x$ and $\beta$.

By Claim \ref{holdercontinuous},
$h_G|_{W^u_\beta(F,x)\cap\Lambda}: W^u_\beta(F,x)\cap\Lambda \to W^u_\beta(G,h_G(x))\cap\Lambda_G$ and its inverse are $(C,r)-$H\"older continuous for some $C>0$.
Notice that $r$ can be arbitrarily close to 1. By Lemma \ref{lemma3.2}  and the above argument, we have that $\dim_H\big(W^u_\beta(F,x)\cap\Lambda\big)$ is independent of $\beta$ and $x$. Similarly, $\underline{\dim}_B\big(W^u_\beta(F,x)\cap\Lambda\big)$ and $\overline{\dim}_B\big(W^u_\beta(F,x)\cap\Lambda\big)$ are independent of $\beta$ and $x$.
 \end{proof}

\begin{proof} [Proof of Theorem \ref{th3.2}]We only prove the statements for dimensions of unstable manifolds, since the other statements for dimensions of stable manifolds can be proven in a similar fashion.

First of all, we prove the continuity of the dimensions with respect to $f$. For any $r\in(0,1)$, take a $C^1$ diffeomorphism $g$ that is $C^1-$close to $f$, and let $\Lambda_g$ be a locally maximal hyperbolic set for $g$. By Theorem \ref{th3.1}, the map $h_g:\ W^u_\beta(f,x)\cap\Lambda\to W^u_\beta(g,h_g(x))\cap\Lambda_g$ and its inverse  are $(C,r)-$H\"{o}lder continuous for some $C>0$. It follows from Lemma \ref{lemma3.2} that
$$r\cdot\dim_H\big(W^u_\beta(f,x)\cap\Lambda\big)\leq\dim_H\big(W^u_\beta(g,h_g(x))\cap\Lambda_g\big)\leq r^{-1}\cdot\dim_H\big(W^u_\beta(f,x)\cap\Lambda\big).$$
Therefore $\dim_H\big(W^u_\beta(f,x)\cap\Lambda\big)$ is a continuous function of $f$. Similarly we have that $\underline{\dim}_B\big(W^u_\beta(f,x)\cap\Lambda\big)$ and $\overline{\dim}_B\big(W^u_\beta(f,x)\cap\Lambda\big)$ are continuous functions of $f\in \mbox{Diff}^1(M)$.

To prove that $\dim_H(W^u_\beta(f,x)\cap\Lambda)$ is  independent of $\beta$ and $x$. Take $\varepsilon\in(0,-\frac12\log\lambda)$, where $\lambda$ is the skewness of the hyperbolicity. Since $f$ is average conformal on $\Lambda$, by Lemma \ref{ave-conver},  we choose a positive integer $2^k\geq N(\varepsilon)$ such that for any $x\in\Lambda$
 \begin{equation*}
1\leq\frac{\|D_xF|_{E^u(x)}\|}{m(D_xF|_{E^u(x)})}< e^{2^k\varepsilon} ~~\text{and}~~ 1\leq\frac{\|D_xF|_{E^s(x)}\|}{m(D_xF|_{E^s(x)})}< e^{2^k\varepsilon},
\end{equation*}
here $F:=f^{2^k}.$ In fact $\Lambda$ is a locally maximal hyperbolic  set for $f$, $\Lambda$ is also a locally maximal hyperbolic  set for $F$.
Notice that
\begin{equation}\label{stableunstablefF}
W^u_\beta(F,x)\cap\Lambda=W^u_\beta(f,x)\cap\Lambda\ \text{and}\ W^s_\beta(F,x)\cap\Lambda=W^s_\beta(f,x)\cap\Lambda.
\end{equation}
By Lemma \ref{HausofF},  $\dim_H\big(W^u_\beta(f,x)\cap\Lambda\big)$,\ $\underline{\dim}_B\big(W^u_\beta(f,x)\cap\Lambda\big)$ and $\overline{\dim}_B\big(W^u_\beta(f,x)\cap\Lambda\big)$ are independent of $\beta$ and $x$.

Finally, we  prove the last statement that $$\dim_H(W^u_\beta(f,x)\cap\Lambda)=\underline{\dim}_B(W^u_\beta(f,x)\cap\Lambda)
=\overline{\dim}_B(W^u_\beta(f,x)\cap\Lambda).$$
%Since $f$ is conformal, for every small $\varepsilon>0$, there exists a $C^2$ diffeomorphism $g$ which is $C^1$-close to $f$ such that  $$1\leq\frac{\|D_xg|_{E^u(x)}\|}{m(D_xg|_{E^u(x)})}\leq e^\varepsilon$$ for each $x\in\Lambda_g$, where $\Lambda_g$ is a locally maximal hyperbolic set of $g$. The following diagram\[ \begin{CD}
%\Lambda   @>f>>    \Lambda\\
%@Vh_g VV     @VVh_g \ \ .  V \\
%\Lambda_g      @>g>>    \Lambda_g
%\end{CD} \]commutes.
By (\ref{stableunstablefF}), Lemmas \ref{lemma3.3} and  \ref{lemma3.4} below, for each $x\in\Lambda$,
$$\underline{t}_u^k\leq\dim_H(W^u_\beta(f,x)\cap\Lambda)\leq\underline{\dim}_B(W^u_\beta(f,x)\cap\Lambda)\leq\overline{\dim}_B(W^u_\beta(f,x)\cap\Lambda)\leq \overline{t}_u^k$$
where $\underline{t}_u^k$ is the unique root of the equation $P_{\Lambda}(F, -t\log\|D_xF|_{E^u}\|)=0$, and $\overline{t}_u^k$ is the unique root of equation $P_{\Lambda}(F, -t\log m(D_xF|_{E^u}))=0.$ Using the same arguments as in the proof of Theorem $6.2$, Theorem $6.3$ and Theorem $6.4$ in \cite{bch}, one can prove that the sequences $\{\underline{t}_u^k\}$ and $\{\overline{t}_u^k\}$ are monotone and
 \begin{eqnarray}\label{limit}
 \lim_{k\to \infty} \underline{t}_u^k=\lim_{k\to \infty}\overline{t}_u^k:=t_u.
 \end{eqnarray}
 Therefore $\dim_H(W^u_\beta(f,x)\cap\Lambda)=\underline{\dim}_B(W^u_\beta(f,x)\cap\Lambda)
=\overline{\dim}_B(W^u_\beta(f,x)\cap\Lambda)=t_u.$
% $$
 % \begin{aligned}0&=P_{\Lambda_g}(g,-t_*\log\|D_xg|_{E^u}\|)\\
 % &=P_{\Lambda_g}(g,-t_*\log\|D_xg|_{E^u}\|+t_*\log m(D_xg|_{E^u})-t_*\log m(D_xg|_{E^u}))\\
 % &\ge -t_*\varepsilon+P_{\Lambda_g}(g,-t_*\log m(D_xg|_{E^u}))\\
 % &=-t_*\varepsilon+P_{\Lambda_g}(g,-t_*\log m(D_xg|_{E^u})+t^*\log m(D_xg|_{E^u})-t^*\log m(D_xg|_{E^u}))\\
 % &\ge -t_*\varepsilon-(t^*-t_*)\log \lambda+P_{\Lambda_g}(g,-t^*\log m(D_xg|_{E^u}))\\
 % &=-t_*\varepsilon-(t^*-t_*)\log \lambda.
%  \end{aligned}$$
 % Hence,
 % \[
 % 0\le t^*-t_*\le \frac{t_*}{-\log\lambda}\varepsilon\le \frac{m}{-\log\lambda}\varepsilon.
 % \]
%Since $\varepsilon\to 0$ as $g\xrightarrow{C^1} f$,
 %the desired result follows from the first part of the theorem.
This completes the proof of Theorem \ref{th3.2}.
\end{proof}

\begin{remark}\label{bowen-equation} Using the same arguments as the proof of Theorem 6.3 in \cite{bch}, one can show that the limit point $t_u$ in \eqref{limit} is exactly the  unique solution of the following equation
\[
P_\Lambda\Big(f,-t\big\{\log m\big(D_xf^n|_{E^u(x)}\big)\big\}\Big)=0.
\]
This implies that the dimensions $\dim_H(W^u_\beta(f,x)\cap\Lambda)$, $\underline{\dim}_B(W^u_\beta(f,x)\cap\Lambda)$ and $\overline{\dim}_B(W^u_\beta(f,x)\cap\Lambda)$ are give by the unique zero of $P_\Lambda\big(f,-t\big\{\log m\big(D_xf^n|_{E^u(x)}\big)\big\}\big)$.
 Similarly, for any small $\beta>0$ and every $x\in\Lambda$ one can prove that $$\dim_H(W^s_\beta(f,x)\cap\Lambda)=\underline{\dim}_B(W^s_\beta(f,x)\cap\Lambda)
=\overline{\dim}_B(W^s_\beta(f,x)\cap\Lambda)=t_s,$$
where $t_s$ is  the unique solution of the following equation   $$ P_\Lambda\Big(f,t\big\{\log\|D_xf^n|_{E^s(x)}\|\big\}\Big)=0.$$
\end{remark}

\begin{remark}  %It follows from Theorem \ref{subaddvariprin}, Theorem \ref{supaddvariprin} and Lemma \ref{ave-conver} that
Since $t_u$ is the unique zero of $P_\Lambda\big(f,-t\big\{\log m\big(D_xf^n|_{E^u(x)}\big)\big\}\big)$,
\begin{eqnarray*}
\begin{aligned}
t_u&= \sup\Big\{\frac{h_\mu(f)}{\displaystyle{\lim_{n\to\infty} \frac 1n \int\log m\big(D_xf^n|_{E^u(x)}\big)d\mu(x)}}: \mu\in\mathcal{M}_f(\Lambda)\Big\}\ \text{(by Theorem \ref{supaddvariprin})}\\
&= \sup\Big\{\displaystyle{\frac{h_\mu(f)}{\int\log \phi_u(f,x) d\mu(x)}}: \mu\in\mathcal{M}_f(\Lambda)\Big\}\ \text{(by Lemma \ref{ave-conver})}\\
&= \sup\Big\{\frac{h_\mu(f)}{\displaystyle{\lim_{n\to\infty}\frac 1n \int\log \|D_xf^n|_{E^u(x)}\| d\mu(x)}}: \mu\in\mathcal{M}_f(\Lambda)\Big\}\ \text{(by Lemma \ref{ave-conver})}
\end{aligned}
\end{eqnarray*}
where $\mathcal{M}_f(\Lambda)$ is the space of all $f-$invariant measures on $\Lambda$. By considering $f^{-1}$, one can similarly show that
\begin{eqnarray*}
\begin{aligned}
t_s&= \sup\Big\{\frac{h_\mu(f)}{\displaystyle{-\lim_{n\to\infty} \frac 1n \int\log m\big(D_xf^n|_{E^s(x)}\big)d\mu(x)}}: \mu\in\mathcal{M}_f(\Lambda)\Big\}\\
&= \sup\Big\{\displaystyle{\frac{h_\mu(f)}{-\int\log \phi_s(f,x) d\mu(x)}}: \mu\in\mathcal{M}_f(\Lambda)\Big\}\\
&= \sup\Big\{\frac{h_\mu(f)}{\displaystyle{-\lim_{n\to\infty}\frac 1n \int\log \|D_xf^n|_{E^s(x)}\| d\mu(x)}}: \mu\in\mathcal{M}_f(\Lambda)\Big\}.
\end{aligned}
\end{eqnarray*}
\end{remark}

\begin{lemma}\label{lemma3.3}
Let $f: M \to M$ be a $C^1$ diffeomorphism, and $\Lambda\subseteq M$ a locally maximal hyperbolic set.  Then
$$\dim_H\big(W^u_\beta(f,x)\cap \Lambda\big) \geq t_*$$
where $t_*$ is the unique root of $P_\Lambda(f, -t\log\|D_xf|_{E^u(x)}\|)=0$.
\end{lemma}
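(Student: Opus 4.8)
The plan is to prove the lower bound by exhibiting, for every $t<t_*$, a Borel probability measure supported on $W^u_\beta(f,x)\cap\Lambda$ whose lower pointwise dimension is at least $t$, and then to invoke the mass distribution principle (Billingsley's lemma). The starting point is the classical variational principle for the additive potential $-t\log\|D_xf|_{E^u(x)}\|$, which, as a special case of Theorem \ref{subaddvariprin}, gives $P_\Lambda(f,-t\log\|D_xf|_{E^u}\|)=\sup_{\mu\in\mathcal{M}_f(\Lambda)}\{h_\mu(f)-t\,\lambda_u(\mu)\}$, where $\lambda_u(\mu)=\int\log\|D_xf|_{E^u(x)}\|\,d\mu$. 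Since submultiplicativity gives $\int\log\|D_xf^n|_{E^u}\|\,d\mu\le n\,\lambda_u(\mu)$ while the uniform expansion on $E^u$ forces $\|D_xf^n|_{E^u}\|\ge m(D_xf^n|_{E^u})\ge C^{-1}\lambda^{-n}$, we get $\lambda_u(\mu)\ge-\log\lambda>0$ uniformly; hence $t\mapsto P_\Lambda(f,-t\log\|D_xf|_{E^u}\|)$ is continuous, strictly decreasing, and vanishes only at $t_*$. Thus for fixed $t<t_*$ the pressure is positive, and by the ergodic decomposition I may choose an \emph{ergodic} $\mu$ with $h_\mu(f)-t\,\lambda_u(\mu)>0$, i.e. $h_\mu(f)/\lambda_u(\mu)>t$.

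First I would disintegrate $\mu$ along a measurable partition $\xi$ subordinate to the unstable foliation, obtaining conditional measures $\{\mu^\xi_y\}$ supported on pieces of $W^u(f,y)\cap\Lambda$; this disintegration (Rokhlin) needs no smoothness beyond $C^1$ and, crucially, no absolute continuity of the foliation. The two estimates to combine are: (a) a Brin--Katok/Shannon--McMillan identity for the conditional measures, $\lim_{n\to\infty}-\tfrac1n\log\mu^\xi_y\big(B^u_{n+1}(y,\rho)\big)=h_\mu(f)$ for $\mu$-a.e. $y$ and all small $\rho$, reflecting that the metric entropy of a hyperbolic system is carried by the unstable direction; and (b) a geometric comparison between unstable Bowen balls and genuine balls in $W^u$. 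For the latter, submultiplicativity of the operator norm gives $\|D_yf^n|_{E^u(y)}\|\le\prod_{k=0}^{n-1}\|D_{f^k y}f|_{E^u}\|$, so that, up to a uniform constant coming from hyperbolicity, $B^u\!\big(y,\rho\,\|D_yf^n|_{E^u(y)}\|^{-1}\big)\subseteq B^u_{n+1}(y,\rho)$; meanwhile Birkhoff's theorem yields $\tfrac1n\log\|D_yf^n|_{E^u(y)}\|\le\tfrac1n\sum_{k=0}^{n-1}\log\|D_{f^ky}f|_{E^u}\|\to\lambda_u(\mu)$.

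Putting (a) and (b) together along the radii $\delta_n:=\rho\,\|D_yf^n|_{E^u(y)}\|^{-1}$ gives, for $\mu^\xi_y$-a.e. $y$,
\[
\liminf_{\delta\to0}\frac{\log\mu^\xi_y\big(B^u(y,\delta)\big)}{\log\delta}\ \ge\ \frac{h_\mu(f)}{\lambda_u(\mu)}\ >\ t,
\]
where the passage from the sequence $\delta_n$ to all $\delta\to0$ is routine since consecutive radii have bounded ratio (as $\|Df|_{E^u}\|$ is bounded above). By the mass distribution principle this forces $\dim_H\big(\operatorname{supp}\mu^\xi_y\big)\ge t$, and since $\operatorname{supp}\mu^\xi_y\subseteq W^u(f,y)\cap\Lambda$ we obtain $\dim_H\big(W^u_\beta(f,y)\cap\Lambda\big)\ge t$ for $\mu$-a.e. $y$. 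Letting $t\uparrow t_*$ yields the bound $t_*$ for $\mu$-a.e. point; to promote this to every $x\in\Lambda$ I would invoke the $x$-independence of the unstable-slice dimension (which, in the setting where the lemma is applied, is supplied separately, so an almost-everywhere bound suffices), or, more generally, transport between unstable slices using the local product structure.

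The main obstacle is ingredient (a): identifying the exponential decay rate of the \emph{conditional} measures on unstable Bowen balls with the full metric entropy $h_\mu(f)$. This is the hyperbolic analogue of the statement that entropy equals ``unstable entropy,'' which in the $C^{1+\gamma}$ world is part of the Ledrappier--Young machinery; here one must verify that it survives in the $C^1$ category. The cleanest route is to relate $\mu^\xi_y\big(B^u_{n+1}(y,\rho)\big)$ to the ambient Bowen ball $\mu\big(B_n(y,\rho)\big)$ through the local product structure---observing that the forward Bowen ball is essentially constant in the stable direction, so that its $n$-dependence comes entirely from the unstable factor---and then apply Brin--Katok to $\mu$ itself. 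Care is needed because in the $C^1$ category the stable holonomies are only H\"older, but since only the entropy statement (and not absolute continuity) is required, this weakness does not enter, and the argument goes through.
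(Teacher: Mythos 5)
Your strategy (conditional measures along unstable leaves plus the mass distribution principle) is genuinely different from the paper's, but it has a real gap at exactly the point you yourself flag as ``the main obstacle'' and then wave away. Your ingredient (a) --- that $-\frac1n\log\mu^\xi_y\big(B^u_{n+1}(y,\rho)\big)$ converges to (or is eventually bounded below by) $h_\mu(f)$ for an \emph{arbitrary} ergodic $\mu$ --- does not follow from the local product structure of the \emph{set} $\Lambda$ together with Brin--Katok for $\mu$. The ambient Bowen ball $B_n(y,\rho)$ indeed fibers over a stable transversal into stable-holonomy images of unstable Bowen balls, but its $\mu$-measure decomposes as an \emph{integral over the transversal} of the conditional measures of those slices; to convert the decay of that integral into pointwise decay of $\mu^\xi_y$ on the center leaf you must compare $\mu^\xi_z$ of a holonomy image with $\mu^\xi_y$ of the original set, i.e.\ you need quasi-invariance of the conditional measures under stable holonomy. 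That is a property of Gibbs/SRB-type measures or measures with local product structure; a general invariant measure (and in the $C^1$ setting your $\mu$, chosen merely to have $h_\mu(f)-t\lambda_u(\mu)>0$ for a continuous, non-H\"older potential, is just that) does not have it. So the obstruction is not the H\"older regularity of the holonomies at all: your reduction fails already for $C^\infty$ horseshoes. What you actually need is the ``entropy equals unstable entropy'' theorem, which in the $C^{1+\gamma}$ world is Ledrappier--Young (proved via increasing subordinate partitions, not via holonomy), and in the $C^1$ uniformly hyperbolic setting would have to be extracted from symbolic dynamics (Markov partitions, where it becomes $h_\mu(\sigma)=H_\mu(\mathcal{P}\mid\mathcal{P}^-)$ plus a conditional Shannon--McMillan--Breiman theorem); your proposal invokes neither, so the proof does not close.

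A secondary problem: step (b) as written, $B^u\big(y,\rho\,\|D_yf^n|_{E^u(y)}\|^{-1}\big)\subseteq B^u_{n+1}(y,\rho)$ ``up to a uniform constant,'' is false for general non-conformal hyperbolic sets, which is the setting of this lemma. Diameter growth along the leaf is controlled by products of norms $\prod_{j<k}\|D_{f^jy}f|_{E^u}\|$ at nearby points, and for non-conformal $C^1$ maps these can exceed $\|D_yf^n|_{E^u(y)}\|$ by factors exponential in $n$ (norm of a product versus product of norms, compounded by $C^1$-only distortion); this is fixable by defining the radii through Birkhoff sums $\exp\big({-\sum_{j<n}(\log\|D_{f^jy}f|_{E^u}\|+\varepsilon)}\big)$ and letting $\varepsilon\to0$. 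It is worth noting how the paper sidesteps all of this: it takes an ergodic equilibrium state $\mu$ of $-t_*\log\|Df|_{E^u}\|$, uses Bowen's identity $h_\mu(f)=h(f,G_\mu)$ for the set of generic points, and bounds $h(f,G_\mu)$ above by $(\dim_H(W\cap\Lambda)+\varepsilon)(\lambda^u+2\varepsilon)$ by taking an efficient cover of an unstable slice and thickening each element along local stable manifolds into a cover of $G_\mu$. That converts the dimension-versus-entropy comparison into a purely topological covering estimate, requiring no conditional measures, no leafwise entropy theory, and no holonomy regularity.
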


\begin{proof}
Let $\phi^u(x): =\log\|D_xf|_{E^u(x)}\|$,  and let $\mu$ be an ergodic equilibrium state of the topological pressure $P_\Lambda(f, -t_*\phi^u(x))$ and $$G_\mu=\Big\{x\in\Lambda: \frac{1}{n}\sum_{i=0}^{n-1}\delta_{f^ix}\to\mu\ \mbox{as}\ n\to\infty\Big\}.$$
By the variational principle of topological pressure, we have that
$$t_*=\frac{h_\mu(f)}{\int\log \|D_xf|_{E^u(x)}\| d\mu}.$$

Given $\varepsilon>0$. Let $\mathcal{A}$ be a covering of $\Lambda$ by open sets on each of which $\phi^u(x)$ varies by at most $\varepsilon$. Denote $\lambda^u:=\int\phi^ud\mu$. Let $l$ be a Lebesgue number for $\mathcal{A}$. Take any ball $W$ in any unstable manifold and choose $m$ so large that
$$f^mW \cap W^s_{\frac{1}{2}l}(f, x)\neq \emptyset\ \mbox{for every}\ x\in\Lambda.$$
For each $r\ge 1$, define
$$G_{\mu,r}=\Big\{x\in G_\mu\Big|\ |\frac{1}{m}\sum_{i=0}^{m-1}\phi^u(f^ix)-\lambda^u|<\varepsilon\ \mbox{for}\ m\geq r\Big\},$$
it is clear that $G_{\mu,r}\subset G_{\mu,r+1}$. Since $f^m$ is $C^1$ diffeomorphism, by Corollary \ref{corofle3.2},
$$\dim_H(f^mW\cap G_{\mu,r})\leq\dim_H(f^mW\cap\Lambda)=\dim_H(W\cap\Lambda): =d$$
for every $r\ge 1$.
For each $r$, we can choose a cover $\mathcal{U}_r$ of $f^mW\cap G_{\mu,r}$ by open set in $f^mW$ satisfying
$$\sum_{U\in\mathcal{U}_r}(\mathrm{diam} U)^{d+\varepsilon}<2^{-r}.$$
For each $U\in\mathcal{U}_r$, define $U^*=\bigcup_{x\in U\cap\Lambda}W^s_{\frac{1}{2}l}(f,x)$. For any integer $n\geq 0$, if $\mathrm{diam} f^nU<l$, then $\mathrm{diam}f^nU^*<l$. Hence, $f^nU^*$ is contained in some element of $\mathcal{A}$. For each $U\in\mathcal{U}_r,\ \exists\ y\in U$ and $z\in U\cap G_{\mu,r}$ such that
\begin{eqnarray*}
\begin{aligned}
\mathrm{diam }f^{n_{\mathcal{A}}(U^*)}U&\leq\|D_yf^{n_{\mathcal{A}}(U^*)}|_{E^u(y)}\|\ \mathrm{diam }U\\
&\leq\Big(\prod_{i=0}^{n_{\mathcal{A}}(U^*)-1}\|D_{f^iy}f|_{E^u(f^iy)}\|\Big)\cdot \mathrm{diam} U\\
&\leq\Big(\prod_{i=0}^{n_{\mathcal{A}}(U^*)-1}\|D_{f^iz}f|_{E^u(f^iz)}\|\Big)\cdot e^{n_{\mathcal{A}}(U^*)\varepsilon}\ \mathrm{diam} U.
\end{aligned}
\end{eqnarray*}
We may choose $\mathcal{U}_r$  fine enough so that $n_{\mathcal{A}}(U^*)>r$. By the definition of $U^*$ and $n_{\mathcal{A}}(U^*)$, we have $l\leq \mathrm{diam} f^{n_{\mathcal{A}}(U^*)}U$. Otherwise, $f^{n_{\mathcal{A}}(U^*)}U^*$ is contained in some element of $\mathcal{A}$, which contradicts with the definition of $n_{\mathcal{A}}(U^*)$. Hence
$$l \leq e^{(\lambda^u+2\varepsilon)n_{\mathcal{A}}(U^*)} \mathrm{diam} U.$$
Hence
\begin{eqnarray*}
\begin{aligned}
&\ \ \ \ \sum_{U\in\mathcal{U}_r}\exp[-(\lambda^u+2\varepsilon)(d+\varepsilon)\cdot n_{\mathcal{A}}(U^*)]\\
&\leq l^{-(d+\varepsilon)}\sum_{U\in\mathcal{U}_r}(\mathrm{diam} U)^{d+\varepsilon}\\
&\leq l^{-(d+\varepsilon)}2^{-r}.
\end{aligned}
\end{eqnarray*}
For each $y\in G_\mu$, there exists $x\in f^mW\cap G_{\mu,r}$ for all sufficiently large $r>0$ such that $y\in W^s_{\frac{l}{2}}(f,x)$. Let $\mathcal{U}^*_q=\{U^*|\ U\in \mathcal{U}_r,\ r>q\}$. Then $\mathcal{U}^*_q$ is a cover of $G_\mu$ and
$$\sum_{U^*\in\mathcal{U}^*_q}\exp[-(d+\varepsilon)(\lambda^u+2\varepsilon)\cdot n(U^*)]\leq l^{-(d+\varepsilon) }\sum_{r>q}2^{-r} \le l^{-(d+\varepsilon) }2^{-q}.$$
Thus $h_{\mathcal{A}}(f,G_\mu)\leq (d+\varepsilon)(\lambda^u+2\varepsilon)$. It follows that
\[
h(f,G_\mu)\leq (d+\varepsilon)(\lambda^u+2\varepsilon).
\]
Since $h_\mu(f)=h(f,G_\mu)$ (see \cite{b73} for the proof), we have that
 $$h_\mu(f)\leq (d+\varepsilon)(\lambda^u+2\varepsilon).$$ Since $\varepsilon$ is arbitrary, we have that $$d\geq \frac{h_\mu(f)}{\lambda^u}=t_*,$$
 which implies the desired result.
\end{proof}

\begin{lemma}\label{lemma3.4}
Let $f: M \to M$ be a $C^1$ diffeomorphism, and $\Lambda\subseteq M$ a locally maximal hyperbolic  set.  Then
$$\overline{\dim}_B\big(W^u_\beta(f,x)\cap\Lambda\big)\leq t^*$$
where $t^*$ is the unique real number such that $P_\Lambda\big(f, -t\log m(D_xf|_{E^u(x)})\big)=0$.
\end{lemma}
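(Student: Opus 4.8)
The plan is to build, for each small scale $\delta>0$, an economical cover of $W^u_\beta(f,x)\cap\Lambda$ by unstable Bowen balls whose number is controlled by the topological pressure, and then to extract the upper box dimension from the exponential growth of that number as $\delta\to0$. Only the minimum norm and the hyperbolicity constants will enter, so no conformality is needed, consistent with the statement being about a general locally maximal hyperbolic set. I record two elementary facts used throughout: the minimum norm is supermultiplicative, so $m(D_yf^n|_{E^u})\ge\prod_{i=0}^{n-1}m(D_{f^iy}f|_{E^u})$; and $t^*\ge0$, since $P_\Lambda(f,0)=h_{\mathrm{top}}(f|_\Lambda)\ge0$ while the pressure becomes negative for large $t$ (the unstable Lyapunov exponents being positive).

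First I would set up a stopping-time cover. Fix a small radius $\rho>0$ and put $\gamma_2=\max_{z\in\Lambda}\|D_zf|_{E^u(z)}\|$. For $y\in\Lambda$ the ball $B^u_{n+1}(y,\rho)$ has $d_u$-diameter at most $2\rho/m(D_yf^n|_{E^u(y)})$, because $m(D_yf^n|_{E^u})\,d_u(y,z)\le d_u(f^ny,f^nz)<\rho$ for $z\in B^u_{n+1}(y,\rho)$. Hyperbolicity gives $m(D_yf^n|_{E^u})\ge C^{-1}\lambda^{-n}\to\infty$ uniformly, so for each $y\in W^u_\beta(f,x)\cap\Lambda$ there is a smallest integer $n(y)$ with $m(D_yf^{n(y)}|_{E^u})\ge 2\rho/\delta$; the one-step bound $m(D_yf^{n}|_{E^u})\le\gamma_2\,m(D_yf^{n-1}|_{E^u})$ at $n=n(y)$ then yields
\[
\frac{2\rho}{\delta}\le m\big(D_yf^{n(y)}|_{E^u(y)}\big)<\frac{2\gamma_2\rho}{\delta}.
\]
In particular every $B^u_{n(y)+1}(y,\rho)$ has diameter at most $\delta$, and $n(y)\le \log(2C\rho/\delta)/\log(\lambda^{-1})+1=:n_+=O(\log(1/\delta))$.

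Next I would select, for each level $n\le n_+$, a maximal $(n,\rho)$-separated subset $S_n$ of $\{y\in W^u_\beta(f,x)\cap\Lambda:n(y)=n\}$. Maximality makes $S_n$ spanning for that level set, so $\bigcup_{n\le n_+}\{B^u_{n+1}(y,\rho):y\in S_n\}$ covers $W^u_\beta(f,x)\cap\Lambda$ by sets of diameter at most $\delta$, whence $N\big(W^u_\beta(f,x)\cap\Lambda,\delta\big)\le\sum_{n\le n_+}\#S_n$. To bound $\#S_n$ I use the additive potential $\phi(z)=-t^*\log m(D_zf|_{E^u(z)})$ with Birkhoff sum $\phi_n(y)=-t^*\sum_{i=0}^{n-1}\log m(D_{f^iy}f|_{E^u})$. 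Supermultiplicativity and $t^*\ge0$ give $m(D_yf^n|_{E^u})^{-t^*}\le e^{\phi_n(y)}$, and combining this with $m(D_yf^{n(y)}|_{E^u})<2\gamma_2\rho/\delta$ gives $1\le(2\gamma_2\rho/\delta)^{t^*}e^{\phi_n(y)}$ for $y\in S_n$; summing and using that $S_n$ is $(n,\rho)$-separated,
\[
\#S_n\le\Big(\tfrac{2\gamma_2\rho}{\delta}\Big)^{t^*}\sum_{y\in S_n}e^{\phi_n(y)}\le\Big(\tfrac{2\gamma_2\rho}{\delta}\Big)^{t^*}P_n\big(\Lambda,f,\phi,\rho\big).
\]

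Finally I would invoke $P_\Lambda(f,-t^*\log m(D_xf|_{E^u}))=0$: given $\eta>0$, choose $\rho$ small enough that $\limsup_n\frac1n\log P_n(\Lambda,f,\phi,\rho)<\eta$ and absorb the finitely many small $n$ into a constant, obtaining $P_n(\Lambda,f,\phi,\rho)\le C_\eta e^{n\eta}$ for all $n$. Summing the previous display over $n\le n_+$ (a geometric sum) and using $e^{n_+\eta}=O\big((1/\delta)^{\eta/\log(\lambda^{-1})}\big)$ gives, up to a multiplicative constant depending on $\eta$,
\[
N\big(W^u_\beta(f,x)\cap\Lambda,\delta\big)\le C_\eta'\,\Big(\tfrac{1}{\delta}\Big)^{t^*+\eta/\log(\lambda^{-1})}.
\]
Dividing by $-\log\delta$, letting $\delta\to0$ and then $\eta\to0$ yields $\overline{\dim}_B\big(W^u_\beta(f,x)\cap\Lambda\big)\le t^*$. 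The main obstacle is the bookkeeping in the selection step: the separation and the pressure are phrased in the ambient metric $d$ over the window $0\le k\le n-1$, whereas the Bowen balls and their diameters live in the leaf metric $d_u$ over $0\le k\le n$. Reconciling these—via the uniform equivalence of $d$ and $d_u$ on local unstable leaves and adjusting $\rho$ by a bounded factor, which does not affect the pressure as $\rho\to0$—is the only delicate point; the thermodynamic input and the diameter estimates are then routine.
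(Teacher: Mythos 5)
Your overall strategy (pressure zero $\Rightarrow$ economical Bowen-ball covers $\Rightarrow$ box-dimension bound) is legitimate, but as written there is a genuine gap at its geometric core: the diameter estimate $\operatorname{diam} B^u_{n+1}(y,\rho)\le 2\rho/m(D_yf^n|_{E^u(y)})$, which you justify by the inequality $m(D_yf^n|_{E^u(y)})\,d_u(y,z)\le d_u(f^ny,f^nz)$ for $z\in B^u_{n+1}(y,\rho)$. That is not what the mean value argument gives: it gives $d_u(f^ny,f^nz)\ge \inf_\xi m(D_\xi f^n|_{E^u(\xi)})\,d_u(y,z)$, with the infimum over intermediate points $\xi$ whose $f^n$-images lie on the leaf geodesic from $f^ny$ to $f^nz$. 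To replace $\inf_\xi m(D_\xi f^n|_{E^u})$ by $m(D_yf^n|_{E^u})$ you would need a distortion estimate for minimum norms of $n$-fold iterates at nearby points, uniform in $n$, and no such estimate holds for a general (non-conformal) hyperbolic set: minimum norms are only supermultiplicative, and the minimum norm of a product of matrices is not stable under perturbation of its factors, so for $\xi$ shadowing $y$ at scale $\rho$ for $n$ steps the ratio $m(D_\xi f^n|_{E^u})/m(D_yf^n|_{E^u})$ can decay exponentially in $n$ (the same phenomenon that lies behind the discontinuity of Lyapunov exponents). The only bound that one-step uniform continuity yields is $\inf_\xi m(D_\xi f^n|_{E^u})\ge e^{-n\varepsilon}\prod_{k=0}^{n-1}m(D_{f^ky}f|_{E^u(f^ky)})$, and since this Birkhoff product can itself be exponentially smaller than $m(D_yf^n|_{E^u})$ (e.g. alternating expansion rates $2,3$ and $3,2$ in a two-dimensional $E^u$ give $2^n$ versus $6^{n/2}$), your Bowen balls $B^u_{n(y)+1}(y,\rho)$ need not have diameter $\le\delta$; the inequality $N\big(W^u_\beta(f,x)\cap\Lambda,\delta\big)\le\sum_{n\le n_+}\#S_n$ then fails, and the count collapses.

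The gap is repairable within your scheme: run the stopping time on the Birkhoff product, i.e. let $n(y)$ be the first $n$ with $e^{-n\varepsilon}\prod_{k=0}^{n-1}m(D_{f^ky}f|_{E^u})\ge 2\rho/\delta$. Then the diameter bound is legitimate (only one-step distortion is used), your supermultiplicativity step becomes unnecessary because $e^{\phi_n(y)}$ is exactly the $(-t^*)$-power of this product, and the extra $e^{nt^*\varepsilon}$ losses are harmless since $n_+=O(\log(1/\delta))$; one does need the Birkhoff products to grow uniformly, which holds when $\int\log m(Df|_{E^u})\,d\mu>0$ for every invariant $\mu$ --- an assumption implicitly required anyway for $t^*$ to exist as the unique zero of the pressure. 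Note also that the paper sidesteps this issue entirely by arguing in the opposite direction: starting from a maximal $(1,r_l)$-separated set of cardinality $\ge r_l^{-d+\eta}$, it assigns geometric stopping times (the time at which the $\rho$-Bowen ball in the leaf fits inside the $r_l/2$-ball), pigeonholes to get a single $n$ with at least $r_l^{-d+2\eta}$ points forming an $(n+2,\rho)$-separated set, and bounds the partition sum from below to conclude $P_\Lambda\big(f,-(d-3\eta)\log m(Df|_{E^u})\big)\ge 0$, hence $t^*\ge d-3\eta$. That direction needs only the existence of an intermediate point $\xi_i$ with $m(D_{\xi_i}f^{n_i}|_{E^u})$ \emph{small} (which the mean value inequality does provide) plus one-step distortion along nearby orbits --- never the unavailable lower bound $m(D_\xi f^n|_{E^u})\gtrsim m(D_yf^n|_{E^u})$ on which your cover rests.
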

\begin{proof}
Denote
$d:=\overline{\dim}_\mathrm{B}\big(W_\beta^u(f,x)\cap\Lambda\big)$, assume that $d>0$, otherwise there is nothing to prove. For a small number $\eta >0$ with $d-3\eta >0$, from the definition of upper box dimension, for each sufficiently
large $l$, there exists $0<r_l<1/l$ such that
\[
N\big(W_\beta^u(f, x)\cap\Lambda,r_l\big)\geq r_l^{-d+\eta}
\]
where $N\big(W_\beta^u(f, x)\cap\Lambda,r_l\big)$ denotes the minimal number of balls of radius
 $r_l$ that are needed to  cover $W_\beta^u(f, x)\cap\Lambda$.

Let $\{x_1,x_2,\cdots, x_N\}$ be a maximal $(1,r_l)-$separated
subset of $W_\beta^u(f, x)\cap\Lambda$, then  the balls $\{B^u(x_i, r_l/2)\}_{i=1}^{N}$ are mutually
disjoint and $W_\beta^u(f,x)\cap\Lambda\subset\bigcup\limits_{i=1}^{N}B^u(x_i,r_l)$.
This implies that $$N\geq N\big(W_\beta^u(f,x)\cap\Lambda, r_l\big)\geq r_l^{-d+\eta}.$$

For a small $\varepsilon>0$, there exists $\rho>0$ such that
\[
e^{-\varepsilon}<\frac{m(D_xf|_{E^u(x)})}{m(D_yf|_{E^u(y)})}<e^{\varepsilon}
\]
provided that $d_u(x,y)<\rho$. For each $1\leq i\leq N$, there exists a positive integer $n_i$ so
that
\[
B_{n_i+2}^u(x_i,\rho)\subset
B^u(x_i,r_l/2)~~\text{but}~~B_{n_i+1}^u(x_i,\rho)\nsubseteq B^u(x_i,r_l/2).
\]
Therefore, there exists $y\in B_{n_i+1}^u(x_i,\rho)$ such that
$$d_u(x_i,y)>r_l/2~~\text{and}~~d_u(f^{n_i}x_i, f^{n_i}y)<\rho.$$ Note that there
exists $\xi_i\in  B_{n_i+1}^u(x_i,\rho)$ such that
$$\lambda^{-n_i}\leq m(D_{\xi_i}f^{n_i}|_{E^u(\xi_i)})\leq \frac{d_u(f^{n_i}x_i,
f^{n_i}y)}{d_u(x_i,y)}< \frac{2\rho}{r_l}.$$
Hence $n_i\leq
\displaystyle{\frac{\log\frac{2\rho}{r_l} }{-\log \lambda}}$.

On the other hand, since $f^{n_i+1}:B_{n_i+2}^u(x_i,\rho)\rightarrow
B^u(f^{n_i+1}x_i,\rho)$ is a diffeomorphism and
$B_{n_i+2}^u(x_i,\rho)\subset B^u(x_i,r_l/2)$, there exists $z\in
B_{n_i+2}^u(x_i,\rho)$ so that $$d_u(f^{n_i+1}x_i, f^{n_i+1}z)=\rho.$$ Thus we
have
\[
\rho=d_u(f^{n_i+1}x_i, f^{n_i+1}z)\leq C_1^{n_i+1} d_u(x_i,z)\leq  C_1^{n_i+1} r_l/2
\]
where $C_1=\max\limits_{x\in \Lambda}||D_xf|_{E^u(x)}||$. Therefore, we have $n_i\geq
\displaystyle{\frac{\log \frac{2\rho}{r_l}}{\log C_1}}-1$.

Let $B=\displaystyle{\frac{\log\frac{2\rho}{r_l} }{-\log
\lambda}}-\displaystyle{\frac{\log \frac{2\rho}{r_l}}{\log C_1}}+1$. We
now think of having $N$ balls and $B$ baskets. Then there exists a
basket containing at least $\displaystyle{\frac N B}$ balls. This
implies that there exists a positive integer
$\displaystyle{\frac{\log \frac{2\rho}{r_l}}{\log C_1}}-1\leq
n\leq\displaystyle{\frac{\log\frac{2\rho}{r_l} }{-\log \lambda}}$
such that
\[
\text{Card}\{j | n_j = n\}\geq\frac N B\geq \frac{r_l^{-d+\eta}}{B}\geq
r_l^{-d+2\eta}
\]
the last inequality holds since $l$ is sufficiently large. Since
$B_{n_i+2}^u(x_i,\rho)\subset B^u(x_i,r_l/2)$ and the balls $\{B^u(x_i,
r_l/2)\}_{i=1}^{N}$  are disjoint, we have that the set $E:=\{x_i:\
n_i=n\}$ is an $(n+2,\rho)-$separated subset of $W_\beta^u(f,x)\cap\Lambda$ . Hence
\begin{eqnarray*}
\begin{aligned}
&\ \ \ \  P_{n+2}\Big(W_\beta^u(f, x)\cap\Lambda,f, -(d-3\eta)\log m(D_xf|_{E^u}),\rho\Big)\\
&\geq \sum_{x_i\in E}
\prod_{k=0}^{n+1}m\Big(D_{f^k(x_i)}f|_{E^u(f^kx_i)}\Big)^{-(d-3\eta)}\\
&\geq \sum_{x_i\in E} C_2^{-d+3\eta}e^{-\varepsilon n(d-3\eta)}\prod_{k=0}^{n}m\Big(D_{f^k(\xi_i)}f|_{E^u(f^k\xi_i)}\Big)^{-(d-3\eta)}\\
&\geq C_2^{-d+3\eta}\sum_{x_i\in E} m\big(D_{\xi_i}f^n|_{E^u(\xi_i)}\big)^{-(d-2\eta)}\\
 &\ge C_2^{-d+3\eta}\Big(\frac{2\rho}{r_l}\Big)^{-(d-2\eta)}r_l^{-d+2\eta} \\
 &=C_2^{-d+3\eta}(2\rho)^{-(d-2\eta)}>0,
\end{aligned}
\end{eqnarray*}
where $C_2=\max\limits_{x\in \Lambda}m(D_xf|_{E^u(x)})$.
 It immediately follows
that
\[
P_{W_\beta^u(f, x)\cap\Lambda}\Big(f, -(d-3\eta)\log m(D_xf|_{E^u(x)})\Big)\geq 0.
\]
Hence
\[
P_\Lambda\Big(f, -(d-3\eta)\log m(D_xf|_{E^u(x)})\Big)\geq 0.
\]
Thus $t^*\geq d-3\eta$. The arbitrariness of $\eta$ yields that
$t^*\geq d$.
\end{proof}

\begin{theorem}\label{th3.3}
Let $f: M \to M$ be a $C^1$ diffeomorphism, and $\Lambda\subseteq M$ be a locally maximal average conformal hyperbolic set. Let $\pi^s$ and $\pi^u$ be the holonomy maps of stable and unstable foliations for $f$, i.e. for any $x\in\Lambda$, $x'\in W^s_\beta(f,x)$ and $x''\in W^u_\beta(f,x)$close to $x$,
$$\pi^s:\ W^u_\beta(f,x)\cap\Lambda\to W^u_\beta(f,x')\cap\Lambda\ \text{with}\ \pi^s(y)=W^s_\beta(f,y)\cap W^u_\beta(f,x')$$ and $$\pi^u:\ W^s_\beta(f,x)\cap\Lambda\to W^s_\beta(f,x'')\cap\Lambda\ \text{with}\ \pi^u(z)=W^u_\beta(f,z)\cap W^s_\beta(f,x'').$$ Then for any $\gamma\in(0,1)$, there exists $D_\gamma>0$ such that $\pi^s$, $(\pi^s)^{-1}$, $\pi^u$ and $(\pi^u)^{-1}$ are $(D_\gamma, \gamma)-$H\"{o}lder continuous.
\end{theorem}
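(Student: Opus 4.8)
The plan is to express each holonomy map of $f$ as a conjugate, via the structural-stability conjugacy, of the corresponding holonomy map of a nearby $C^2$ diffeomorphism, and then to track H\"older exponents through the composition. Fix $\gamma\in(0,1)$ and choose $r\in(\sqrt{\gamma},1)$, so that $r^{2}>\gamma$. The underlying idea is that the stable and unstable foliations of a merely $C^1$ system need not have Lipschitz holonomies, whereas those of a suitably dominated $C^2$ system do by Lemma \ref{lipfoliation}, while the topological conjugacy between the two systems is H\"older on leaves with exponent as close to $1$ as we wish by Theorem \ref{th3.1}.

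First I would fix a small $\varepsilon>0$ with $\lambda e^{2\varepsilon}<1$ and, using Lemma \ref{ave-conver}, a large $n$ so that $F:=f^{n}$ satisfies \eqref{daxiaomo}; exactly as in Lemma \ref{HausofF} this yields $a_{F}b_{F}c_{F}\le e^{n\varepsilon}\lambda^{n}<1$ and $a_{F}b_{F}d_{F}<1$. Since these are open conditions and $C^2$ diffeomorphisms are $C^1$-dense, I would then pick a $C^2$ diffeomorphism $\tilde g\in\mathcal{U}^{f}_{r}$, $C^1$-close to $f$, such that $G:=\tilde g^{\,n}$ still satisfies $a_{G}b_{G}c_{G}<1$ and $a_{G}b_{G}d_{G}<1$. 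By Lemma \ref{lemma3.1} and Remark \ref{reofle3.1} the stable and unstable foliations of $\tilde g$ (equivalently of $G=\tilde g^{\,n}$, the leaves being identical) are $C^1$, so the holonomy maps $\pi^{s}_{\tilde g}$ and $\pi^{u}_{\tilde g}$ and their inverses are Lipschitz continuous.

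The structural point I would then establish is that the conjugacy $h_{\tilde g}$ carries the local product structure of $f$ onto that of $\tilde g$: since stable and unstable sets are dynamically defined and $f\circ h_{\tilde g}=h_{\tilde g}\circ\tilde g$, the map $h_{\tilde g}$ sends stable (respectively unstable) leaves of $f$ to stable (respectively unstable) leaves of $\tilde g$. Hence $h_{\tilde g}(x')\in W^{s}_{\beta}(\tilde g,h_{\tilde g}(x))$, and applying $h_{\tilde g}$ to $\pi^{s}(y)=W^{s}_{\beta}(f,y)\cap W^{u}_{\beta}(f,x')$ gives the intertwining relation
\[
h_{\tilde g}\circ\pi^{s}=\pi^{s}_{\tilde g}\circ h_{\tilde g},\qquad\text{that is,}\qquad \pi^{s}=h_{\tilde g}^{-1}\circ\pi^{s}_{\tilde g}\circ h_{\tilde g},
\]
as maps $W^{u}_{\beta}(f,x)\cap\Lambda\to W^{u}_{\beta}(f,x')\cap\Lambda$; the analogous identities hold for $(\pi^{s})^{-1}$, and, with the roles of the two foliations exchanged, for $\pi^{u}$ and $(\pi^{u})^{-1}$.

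Finally I would read off the exponent from the composition. By Theorem \ref{th3.1} both $h_{\tilde g}$ and $h_{\tilde g}^{-1}$ are $(C,r)$-H\"older on the relevant local leaves, while $\pi^{s}_{\tilde g}$ is Lipschitz by the second step; since H\"older exponents multiply under composition, $\pi^{s}=h_{\tilde g}^{-1}\circ\pi^{s}_{\tilde g}\circ h_{\tilde g}$ is $(C',r^{2})$-H\"older for some $C'>0$, and because the leaves have bounded diameter and $r^{2}>\gamma$ this upgrades to $(D_{\gamma},\gamma)$-H\"older continuity. The same computation applies verbatim to $(\pi^{s})^{-1}$, $\pi^{u}$ and $(\pi^{u})^{-1}$. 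I expect the main obstacle to be the second step: one must guarantee that a single $C^2$ perturbation $\tilde g$ can be taken simultaneously inside $\mathcal{U}^{f}_{r}$ (so that Theorem \ref{th3.1} delivers the $(C,r)$-H\"older conjugacy) and $C^1$-close enough to $f$ that the domination inequalities survive for the fixed iterate $G=\tilde g^{\,n}$ --- this is precisely where average conformality, through Lemma \ref{ave-conver} and $\lambda e^{2\varepsilon}<1$, is indispensable. Checking that $h_{\tilde g}$ genuinely intertwines the holonomies, and not merely maps leaves to leaves, is the remaining point requiring care.
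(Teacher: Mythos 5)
Your proposal follows essentially the same route as the paper's proof: pass to a high iterate via Lemma \ref{ave-conver} so that the bunching conditions $a\,b\,c<1$ and $a\,b\,d<1$ hold, take a nearby $C^2$ diffeomorphism whose foliations are $C^1$ (Lemma \ref{lemma3.1}, Remark \ref{reofle3.1}) and hence have Lipschitz holonomies, and transport these back through the leaf-conjugacy of Theorem \ref{th3.1} via the intertwining identity $\pi^s = h^{-1}\circ \pi^s_{G}\circ h$. The only differences are cosmetic --- you perturb $f$ and then iterate, while the paper iterates to $F=f^N$ and then perturbs $F$ inside $\mathcal{U}^F_\gamma\cap\mathrm{Diff}^2(M)$ --- and your explicit choice of $r$ with $r^2>\gamma$ to absorb the exponent loss under composition is in fact more careful than the paper's own bookkeeping at that step.
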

\begin{proof}
Since $f$ is average conformal on $\Lambda$, by Lemma \ref{ave-conver} , for any $\varepsilon\in(0,-\log\lambda)$ we choose a positive integer $N\geq N(\varepsilon)$
 such that
 \begin{equation*}
1\leq\frac{\|D_xF|_{E^u(x)}\|}{m(D_xF|_{E^u(x)})}< e^{N\varepsilon} ~~\text{and}~~ 1\leq\frac{\|D_xF|_{E^s(x)}\|}{m(D_xF|_{E^s(x)})}< e^{N\varepsilon}, \forall x\in\Lambda,
\end{equation*}
here $F:=f^{N}$. In fact $\Lambda$ is a locally maximal hyperbolic  set for $f$, $\Lambda$ is also a locally maximal hyperbolic  set for $F$. Thus
$$W^u_\beta(F,x)\cap\Lambda=W^u_\beta(f,x)\cap\Lambda, \ W^s_\beta(F,x)\cap\Lambda=W^s_\beta(f,x)\cap\Lambda.$$
Therefore $\pi^s$ is also a map from $W^u_\beta(F,x)\cap\Lambda$ to $W^u_\beta(F,x')\cap\Lambda$, and
$\pi^u$ is also a map from $W^s_\beta(F,x)\cap\Lambda$ to $W^s_\beta(F,x'')\cap\Lambda$ as follows:
$$\pi^s(y)=W^s_\beta(F,y)\cap W^u_\beta(F,x')\ \text{and}\ \pi^u(z)=W^u_\beta(F,z)\cap W^s_\beta(F,x'').$$

For any $\gamma\in (0,1)$, let $\mathcal{U}_\gamma^F$ be a small $C^1$ neighborhood of $F$. Taking $G\in\mathcal{U}_\gamma^F\cap \mbox{Diff}^2(M)$, by Claim \ref{holdercontinuous}, Lemma \ref{lipfoliation} and Remark \ref{reofle3.1} we have
%as observed in the proofs of Theorem \ref{th3.1} and Theorem \ref{th3.2}:
\begin{itemize}
\item [ (1) ] $h_G|_{W^u_\beta(F,x)\cap\Lambda}$ and $\big(h_G|_{W^u_\beta(F,x)\cap\Lambda}\big)^{-1}$ are $(C_\gamma,\gamma)-$H\"{o}lder continuous for some $C_\gamma>0$.
\item [ (2) ] The stable foliation $\{W^s(G,z):\ z\in\Lambda_G\}$ is invariant and $C^1$. Thus, the holonomy map $\pi^s_G:\ W^u_\beta(G,h_G(x))\cap\Lambda_G\to W^u_\beta(G,h_G(x'))\cap\Lambda_G$ defined as $\pi^s_G(z):=W^s_\beta(G,z)\cap W^u_\beta(G,h_G(x'))$ is Lipschitz.
\end{itemize}
Therefore for any $y\in W^u_\beta(F,x)\cap\Lambda$,
\begin{eqnarray*}
\begin{aligned}
 h_G\big(\pi^s(y)\big)&= h_G\big(W^s_\beta(F,y)\cap W^u_\beta(F,x')\big)\\
 &= W^s_\beta\big(G,h_G(y))\cap W^u_\beta(G,h_G(x')\big)\\
 &= \pi^s_G\big(h_G(y)\big).
 \end{aligned}
\end{eqnarray*}
For the above $\gamma$,  there exists $D_\gamma>0$ such that $\pi^s=h_G^{-1}\circ\pi^s_G\circ h_G$ is $(D_\gamma, \gamma)-$H\"{o}lder continuous. Using the same arguments, one can prove that $(\pi^s)^{-1},\ \pi^u\ \mbox{and}\ (\pi^u)^{-1}$ are also $(D_\gamma, \gamma)-$H\"{o}lder continuous.
\end{proof}

We proceed to prove Theorem \ref{thm-average}.

\begin{proof}[Proof of Theorem \ref{thm-average}]
{ \bf Step 1.} We claim that
\begin{eqnarray*}
\begin{aligned}
&\ \ \ \ \dim_H A_x = \dim_H \big(W_\beta^u(f, x) \cap \Lambda\big) + \dim_H \big(W_\beta^u(f, x) \cap \Lambda\big)\\
&=\underline{\dim}_B A_x = \underline{\dim}_B \big(W_\beta^u(f, x) \cap \Lambda\big) + \underline{\dim}_B \big(W_\beta^u(f, x) \cap \Lambda\big)\\
&=\overline{\dim}_B A_x = \overline{\dim}_B \big(W_\beta^u(f, x) \cap \Lambda\big) + \overline{\dim}_B \big(W_\beta^u(f, x) \cap \Lambda\big)
\end{aligned}
\end{eqnarray*}
where $A_x = \big(W_\beta^u(f, x) \cap \Lambda\big)  \times \big(W_\beta^s(f, x) \cap \Lambda\big)$ is a product space.  By the definitions of dimension, we have that
$$\overline{\dim}_B A_x\leq\overline{\dim}_B\big(W^u_\beta(f,x) \cap \Lambda\big)+\overline{\dim}_B\big(W^s_\beta(f,x) \cap \Lambda\big)$$
 and  $$\dim_H A_x\geq\dim_H\big(W^u_\beta(f,x) \cap \Lambda\big)+\dim_H\big(W^s_\beta(f,x) \cap \Lambda\big).$$
See Theorem 6.5 in \cite{Pes97} for proofs. Combining Theorem \ref{th3.2} and the fact that $\dim_HA_x\leq \underline{\dim}_BA_x\le \overline{\dim}_BA_x,$ we have
\begin{eqnarray*}
\begin{aligned}
&\ \ \ \ \dim_H A_x = \dim_H \big(W_\beta^u(f, x) \cap \Lambda\big) + \dim_H \big(W_\beta^u(f, x) \cap \Lambda\big)\\
&=\underline{\dim}_B A_x = \underline{\dim}_B \big(W_\beta^u(f, x) \cap \Lambda\big) + \underline{\dim}_B \big(W_\beta^u(f, x) \cap \Lambda\big)\\
&=\overline{\dim}_B A_x = \overline{\dim}_B \big(W_\beta^u(f, x) \cap \Lambda\big) + \overline{\dim}_B \big(W_\beta^u(f, x) \cap \Lambda\big)
\end{aligned}
\end{eqnarray*}
Thus the claim holds.

{ \bf Step 2.} We prove for any $x\in\Lambda$,
\begin{equation}\label{dimensionequ}
\begin{aligned}
&\ \ \ \  \dim_H\Lambda=\dim_H\big(W_\beta^u(f,x) \cap \Lambda\big)+\dim_H\big(W_\beta^s(f,x) \cap \Lambda\big)\\
&= \underline{\dim}_B\Lambda=\underline{\dim}_B\big(W_\beta^u(f,x) \cap \Lambda\big)+\underline{\dim}_B\big(W_\beta^s(f,x) \cap \Lambda\big)\\
&= \overline{\dim}_B\Lambda=\overline{\dim}_B\big(W_\beta^u(f,x) \cap \Lambda\big)+\overline{\dim}_B\big(W_\beta^s(f,x) \cap \Lambda\big).
\end{aligned}
\end{equation}

Let $\Phi: A_x\to\Lambda$ be given by $\Phi(y,z)=W^s_\beta(f,y)\cap W^u_\beta(f,z).$ It is easy to see $\Phi$ is a homeomorphism onto a neighborhood $V_x$ of $x$ in $\Lambda$.
We claim that $\Phi$ and $\Phi^{-1}$ are $(E_\gamma,\gamma)-$H\"{o}lder continuous for any $\gamma\in(0,1)$ and some $E_\gamma>0$. This yields that
\begin{eqnarray*}
\begin{aligned}
\gamma\cdot\dim_HA_x \leq \dim_H V_x \leq \gamma^{-1}\cdot\dim_HA_x,\\
\gamma\cdot\underline{\dim}_BA_x \leq \underline{\dim}_BV_x \leq \gamma^{-1}\cdot\underline{\dim}_BA_x,\\
\gamma\cdot\overline{\dim}_BA_x \leq \overline{\dim}_BV_x \leq \gamma^{-1}\cdot\overline{\dim}_BA_x.
\end{aligned}
\end{eqnarray*}
Letting $\gamma\to1,$ we have that $$\dim_HV_x=\dim_HA_x,\ \underline{\dim}_BV_x=\underline{\dim}_BA_x\ \text{and}\ \overline{\dim}_BV_x=\overline{\dim}_BA_x.$$  Since $\{V_x: x\in\Lambda\}$ is an open cover of $\Lambda$,  one can choose a finite open cover $\{V_{x_1}, V_{x_2},\cdots, V_{x_k}\}$ of $\Lambda$.  It follows from Theorem \ref{th3.2} that
\begin{eqnarray}\label{local-hau}
\dim_H\Lambda=\max_{1\le i\le k}\dim_H V_{x_i}=\dim_H V_{x}=\dim_HA_x,
\end{eqnarray}
\begin{eqnarray}\label{local-unbox}
\underline{\dim}_B\Lambda=\max_{1\le i\le k}\underline{\dim}_B V_{x_i}=\underline{\dim}_BV_x=\overline{\dim}_BA_x
\end{eqnarray}
and
\begin{eqnarray}\label{local-box}
\overline{\dim}_B\Lambda=\max_{1\le i\le k}\overline{\dim}_B V_{x_i}=\overline{\dim}_BV_x=\overline{\dim}_BA_x.
\end{eqnarray}
Therefore for any $x\in\Lambda$,
\begin{equation*}
\begin{aligned}
&\ \ \ \  \dim_H\Lambda=\dim_H\big(W_\beta^u(f,x) \cap \Lambda\big)+\dim_H\big(W_\beta^s(f,x) \cap \Lambda\big)\\
&= \underline{\dim}_B\Lambda=\underline{\dim}_B\big(W_\beta^u(f,x) \cap \Lambda\big)+\underline{\dim}_B\big(W_\beta^s(f,x) \cap \Lambda\big)\\
&= \overline{\dim}_B\Lambda=\overline{\dim}_B\big(W_\beta^u(f,x) \cap \Lambda\big)+\overline{\dim}_B\big(W_\beta^s(f,x) \cap \Lambda\big).
\end{aligned}
\end{equation*}

It suffices to prove the claim above. Let $y_1, y_2\in W^u_\beta(f,x)\cap\Lambda$ and $z_1, z_2\in  W^s_\beta(f,x)\cap\Lambda$. Denote
$w_1=\Phi(y_1,z_1), w_2=\Phi(y_2,z_2), w=W^u_\beta(f,w_1)\cap W^s_\beta(f,w_2)=\Phi(y_2,z_1).$ By Theorem \ref{th3.3}, it has
\begin{eqnarray*}
\begin{aligned}
d(w_1,w_2)&\leq d_u(w,w_1)+d_s(w,w_2)\\
&\leq D_\gamma d_u(y_1,y_2)^\gamma+D_\gamma d_s(z_1,z_2)^\gamma\\
&\leq 2D_\gamma\max\{d_u(y_1,y_2), d_s(z_1,z_2)\}^\gamma.
\end{aligned}
\end{eqnarray*}
This proves the H\"{o}lder continuity of $\Phi$. On the other hand, the fact that there exists $k>0$ such that
$$d(w_1,w_2)\geq k\max\{d_u(w,w_1), d_s(w,w_2)\},$$ and Theorem \ref{th3.3} implies that
\begin{eqnarray*}
\begin{aligned}
&\ \ \ \ \max\{d_u(y_1,y_2),d_s(z_1,z_2)\}\\
&\leq \max\{D_\gamma d_u(w,w_1)^\gamma, D_\gamma d_s(w,w_2)^\gamma\}\\
&\leq D_\gamma k^{-1}d(w_1,w_2)^\gamma.
\end{aligned}
\end{eqnarray*}
So $(\Phi)^{-1}$ is $(D_\gamma k^{-1},\gamma)-$H\"{o}lder continuous. Taking $E_\gamma=\max\big\{2D_\gamma, D_\gamma k^{-1}\big\}$, thus $\Phi$ and $(\Phi)^{-1}$ are $(E_\gamma,\gamma)-$H\"{o}lder continuous.

{ \bf Step 3.} We prove the last assertion that the  dimensions of an average conformal hyperbolic set varies continuous with respect to $f$. Since $f$ is average conformal on $\Lambda$, for any $\varepsilon\in(0,-\frac{1}{2}\log\lambda)$, by Lemma \ref{ave-conver}  we choose a positive integer $N\geq N(\varepsilon)$
such that for any $x\in\Lambda$
\begin{equation*}
1\leq\frac{\|D_xF|_{E^u(x)}\|}{m(D_xF|_{E^u(x)})}< e^{\frac N2\varepsilon} ~~\text{and}~~ 1\leq\frac{\|D_xF|_{E^s(x)}\|}{m(D_xF|_{E^s(x)})}< e^{\frac N2\varepsilon},
\end{equation*}
here $F:=f^N$. Since $\Lambda$ is a locally maximal hyperbolic  set for $f$, $\Lambda$ is also a locally maximal hyperbolic  set for $F$. Then there exists a neighborhood $\mathcal{U}_F$ of $F$ in $\text{Diff}^1(M)$ such that for any $G\in\mathcal{U}_F$,
\begin{equation*}
1\leq\frac{\|D_xG|_{E^u(x)}\|}{m(D_xG|_{E^u(x)})}< e^{N\varepsilon} ~~\text{and}~~ 1\leq\frac{\|D_xG|_{E^s(x)}\|}{m(D_xG|_{E^s(x)})}< e^{N\varepsilon}\ \text{for any}\ x\in\Lambda_G,
\end{equation*}
where $\Lambda_G$ is a locally maximal hyperbolic invariant set of $G$. By Lemma \ref{HausofF},
$\dim_H(W^u_\beta(G,x)\cap\Lambda_G)$, $\underline{\dim}_B(W^u_\beta(G,x)\cap\Lambda_G)$ and $\overline{\dim}_B(W^u_\beta(G,x)\cap\Lambda_G)$ are independent of $\beta$ and $x$. The same statements for $W^s_\beta(G,x)\cap\Lambda_G$ hold.

Let $\Phi^G: \big(W^u_\beta(G,x)\cap\Lambda_G\big) \times \big(W^s_\beta(G,x)\cap\Lambda_G\big) \to \Lambda_G$ given by $\Phi^G(y,z)=W^s_\beta(G,y)\cap W^u_\beta(G,z)$. It is clear that $\Phi^G$ is a homeomorphism onto a neighborhood $V_x^G$ of $x$ in $\Lambda_G$. Let $\pi^s_G$ and $\pi^u_G$ be the holonomy maps of stable and unstable foliations for $G$.
In fact $G$ satisfies \eqref{daxiaomo}, for any $r\in(0,1)$, as in the proof of Theorem \ref{th3.3},
%for any $x\in\Lambda_G$, $x'\in W^s_\beta(G,x)$ and $x''\in W^u_\beta(G,x)$close to $x$, let
%$$\pi^s_G:\ W^u_\beta(G,x)\cap\Lambda_G\to W^u_\beta(G,x')\cap\Lambda_G\ \text{with}\ \pi^s_G(y)=W^s_\beta(G,y)\cap W^u_\beta(G,x')$$
%and
%$$\pi^u_G:\ W^s_\beta(G,x)\cap\Lambda_G\to W^s_\beta(G,x'')\cap\Lambda_G\ \text{with}\ \pi^u_G(z)=W^u_\beta(G,z)\cap W^s_\beta(G,x'').$$
we have $\pi^s_G$, $(\pi^s_G)^{-1}$, $\pi^u_G$ and $(\pi^u_G)^{-1}$ are $(D_r, r)-$H\"{o}lder continuous
for some $D_r>0.$ Using the proof of the H\"{o}lder continuity of $\Phi$ and $\Phi^{-1}$ as above,  one can prove that the map $\Phi^G$ and its inverse $(\Phi^G)^{-1}$ are $(E^G_r,r)-$H\"{o}lder continuous for some $E^G_r>0$. This yields that
\begin{eqnarray*}
\begin{aligned}
&\ \ \ \ r\cdot\dim\Big(\big(W^u_\beta(G,x)\cap\Lambda_G\big)\times\big(W^s_\beta(G,x)\cap\Lambda_G\big)\Big)\\
&\leq \dim V_x^G\\
&\leq r^{-1}\cdot\dim\Big(\big(W^u_\beta(G,x)\cap\Lambda_G\big)\times\big(W^s_\beta(G,x)\cap\Lambda_G\big)\Big)
\end{aligned}
\end{eqnarray*}
where $\dim$ denotes either $\dim_H$ or $\underline{\dim}_B$ or $\overline{\dim}_B$. Letting $r\to1$, we have that
\begin{equation}\label{local-HD}
\dim V_x^G=\dim \Big(\big(W^u_\beta(G,x)\cap\Lambda_G\big)\times\big(W^s_\beta(G,x)\cap\Lambda_G\big)\Big)
\end{equation}
where $\dim$ denotes either $\dim_H$ or $\underline{\dim}_B$ or $\overline{\dim}_B$.
%Since the inequalities $$\dim_HA+\dim_HB\leq\dim_H(A\times B)$$
%and $$\overline{\dim}_B(A\times B)\leq\overline{\dim}_B(A)+\overline{\dim}_B(B)$$ hold for any subsets $A$ and $B$,
Since $V_x^G$ is open, similar as the proof of \eqref{local-hau}, \eqref{local-unbox} and \eqref{local-box}, one can show that
\[
\dim_HV_x^G=\dim_H\Lambda_G,\ \underline{\dim}_BV_x^G=\underline{\dim}_B\Lambda_G~~\text{and}~~\overline{\dim}_BV_x^G=\overline{\dim}_B\Lambda_G
\]
for each $x\in\Lambda_G$.
It follows from Theorem $6.5$ in \cite{Pes97} and (\ref{local-HD}) that
\begin{eqnarray*}
\begin{aligned}
&\ \ \ \ \dim_H\big( W^u_\beta(G,x)\cap\Lambda_G\big)+\dim_H\big(W^s_\beta(G,x)\cap\Lambda_G\big)\\
&\le\dim_H\Lambda_G \leq\underline{\dim}_B\Lambda_G \leq\overline{\dim}_B\Lambda_G\\
&\leq\overline{\dim}_B\big(W^u_\beta(G,x)\cap\Lambda_G\big)+\overline{\dim}_B\big(W^s_\beta(G,x)\cap\Lambda_G\big)
\end{aligned}
\end{eqnarray*}

By Theorem \ref{th3.2}, for any small $\xi>0$, there exists a open neighborhood $\mathcal{U}$ of $f$ in $\mbox{Diff}^1(M)$ such that for any $g\in \mathcal{U}$ and any $x\in\Lambda$ we have
\[
\dim \Big(W_\beta^i\big(g,h_g(x)\big)\cap\Lambda_g\Big)-\xi\le\dim \big(W_\beta^i(f,x)\cap\Lambda\big)\le \dim \Big(W_\beta^i\big(g,h_g(x)\big)\cap\Lambda_g\Big)+\xi
 \]
where $i=u,s$ and $\dim$ denotes either $\dim_H$ or $\underline{\dim}_B$ or $\overline{\dim}_B$. One may choose a sufficiently small open neighborhood $\mathcal{V}_f$ of $f$ in $\mbox{Diff}^1(M)$ such that each $g\in\mathcal{V}_f$ satisfies that $g^{N}\in\mathcal{U}_F$. Put $G:=g^{N}$, note that $\Lambda_G=\Lambda_g$ and so $$W^u_\beta(G,x)\cap\Lambda_G=W^u_\beta(g,x)\cap\Lambda_g\ \text{and}\ W^s_\beta(G,x)\cap\Lambda_G=W^s_\beta(g,x)\cap\Lambda_g$$ for each $x\in\Lambda_g$. It follows from (\ref{dimensionequ}) that  for each $g\in\mathcal{U}\cap\mathcal{V}_f$, we have
\begin{eqnarray*}
\begin{aligned}
\dim_H\Lambda-2\xi&= \dim_H\big(W^u_\beta(f,x)\cap\Lambda\big)+\dim_H\big(W^s_\beta(f,x)\cap\Lambda\big)-2\xi\\
&\le \dim_H\big(W^u_\beta(g,h_g(x))\cap\Lambda_g\big)+\dim_H\big(W^s_\beta(g,h_g(x))\cap\Lambda_g\big)\\
&= \dim_H\big( W^u_\beta(G,h_g(x))\cap\Lambda_G\big)+\dim_H\big(W^s_\beta(G,h_g(x))\cap\Lambda_G\big)\\
&\leq\dim_H\Lambda_g \leq \underline{\dim}_B\Lambda_g \leq \overline{\dim}_B\Lambda_g\\
&\leq\overline{\dim}_B\big(W^u_\beta(g,h_g(x))\cap\Lambda_g\big)+\overline{\dim}_B\big(W^s_\beta(g,h_g(x))\cap\Lambda_g\big)\\
&\le \overline{\dim}_B\big(W^u_\beta(f,x)\cap\Lambda\big)+\overline{\dim}_B\big(W^s_\beta(f,x)\cap\Lambda\big)+2\xi\\
&= \overline{\dim}_B \Lambda +2\xi.
\end{aligned}
\end{eqnarray*}
This means the  dimensions of an average conformal hyperbolic set $\Lambda$ vary continuous with respect to $f\in\text{Diff}^1(M)$. It completes the proof of
Theorem \ref{thm-average}.\end{proof}

\begin{remark}
For a locally maximal average conformal hyperbolic set $\Lambda$ of a $C^1$ diffeomorphism, by Remark \ref{bowen-equation} and \eqref{dimensionequ}, we have
\[
\dim_H\Lambda=\underline{\dim}_B\Lambda=\overline{\dim}_B\Lambda=t_s+t_u
\]
where $t_u$ and $t_s$ are unique solutions of $P_\Lambda\big(f,-t\big\{\log m\big(D_xf^n|_{E^u(x)}\big)\big\}\big)=0$ and $P_\Lambda\big(f,t\big\{\log\|D_xf^n|_{E^s(x)}\|\big\}\big)=0$ respectively.
\end{remark}

%%%%%%%%%%%%%%%%%%%%%%%%%%%%%%%%%%

%%%%%%%%%%%%%%%%%%%%%%%%%%%%%%%%%%%%%%%%%%%%

\bibliographystyle{alpha}
\bibliography{bib}

\end{document}